\newtheorem{thm}{Theorem}
\newtheorem{corollary}[thm]{Corollary}
\newtheorem{lemma}[thm]{Lemma}
\newtheorem{defn}[thm]{Definition}
\newtheorem{example}[thm]{Example}
\newtheorem{rem}[thm]{Remark}
\newcommand{\E}{\mathrm{E}}
\newcommand{\Var}{\mathrm{Var}}
\newcommand{\sh}{\Phi}
\newcommand{\pro}{\kappa}
\newcommand{\iso}{\pi}
\newcommand{\best}{T_\lambda^{\rightarrow}}
\newcommand{\worst}{T_\lambda^{\downarrow}}
\newcommand{\SEQ}{T_\lambda^{\rightarrow}}
\newcommand{\OSEQ}{T_\lambda^{\downarrow}}
\newcommand{\RL}{P}
\newcommand{\LR}{P}
\newcommand{\SYT}{\textnormal{SYT}}
\newcommand{\eig}{\textnormal{eig}}
\newcommand{\eval}{\textnormal{eval}}
\newcommand{\BLR}{P}
\newcommand{\righthand}{R^{i}}
\newcommand{\lefthand}{L^{i}}
\newcommand{\NA}{N_{\alpha}(n)}
\newcommand{\NW}{N_{w}(n)}
\DeclareSymbolFont{bbold}{U}{bbold}{m}{n}
\DeclareSymbolFontAlphabet{\mathbbold}{bbold}
\newcommand{\ind}{\mathbbold{1}}
\begin{document}
	
\title{Cutoff for a One-sided Transposition Shuffle}
\author{Michael E. Bate, Stephen B. Connor, and Oliver Matheau-Raven\thanks{PhD funded by the EPSRC grant: EP/N509802/1}\\University of York, UK}
\date{\today}

\maketitle
\begin{abstract}
We introduce a new type of card shuffle called a \emph{one-sided 
transposition shuffle}. At each step a card is chosen uniformly from 
the pack and then transposed with another card chosen uniformly from \emph{below} it. This defines a random walk on the symmetric group generated by a distribution which is non-constant on the conjugacy class of transpositions. Nevertheless, we provide an explicit formula for all eigenvalues of the shuffle by demonstrating a useful correspondence between eigenvalues and standard Young tableaux. This allows us to prove the existence of a total-variation cutoff for the 
one-sided transposition shuffle at time $n\log n$. We also study a weighted 
	generalisation of the shuffle which, in particular, allows us to recover the well known mixing time of the classical random transposition shuffle. 
\end{abstract}

 \begin{quotation}
	\noindent
	Keywords and phrases:\\
	\textsc{Mixing time; Cutoff phenomenon; Coupon collecting; Representation theory; Young tableaux}
\end{quotation}

 \begin{quotation}\noindent
	2010 Mathematics Subject Classification:\\
	\qquad Primary: 60J10 \\
	\qquad Secondary: 20C30
\end{quotation}

\section{Introduction}

Consider a stacked deck of $n$ distinct cards, whose positions are labelled by elements of the set $[n]:=\{1,\dots,n\}$ from bottom to top. Any shuffle which involves choosing two positions and switching the cards found there is 
called a transposition shuffle, and may be viewed as a random walk on the symmetric group $S_n$. (If the two positions coincide then no cards are moved.) Diaconis and Shahshahani \cite{diaconis1981generating} were the first to study transposition shuffles using the representation theory of $S_{n}$; they famously showed that the \emph{random transposition shuffle}, in which the two positions are chosen independently and uniformly on $[n]$, takes $(n/2)\log n$ steps to randomise the order of the deck. (This is known as the \emph{mixing time} of the shuffle.) The use of representation theory, both in \cite{diaconis1981generating} and successive works (e.g. \cite{berestycki2011, berestycki2019cutoff}), relies heavily on the fact that the distribution generating the shuffle is constant on conjugacy classes of $S_n$.

Since then a variety of algebraic and probabilistic techniques have been employed to study different types of transposition shuffle. Notable examples include the ``transpose top and random'' shuffle \cite{saloff2004random}, the ``adjacent transpositions'' shuffle \cite{Lacoin2016}, and a generalisation of the  latter in which the two cards are constrained to lie within a certain (cyclical) distance of one another \cite{berestycki2012}. All of these shuffles have the property that, at each step, the transposition to be applied is chosen uniformly from within some set which generates the entire group $S_n$.

\emph{Semi-random transposition shuffles} form an interesting class of Markov chains (see e.g. \cite{Mossel2004, Pymar2011, pinsky2012cyclic}). In this class the right hand chooses a card uniformly at random, the left hand chooses a card via some (possibly time-inhomogeneous) independent stochastic process, and then the two chosen cards are transposed. A universal upper bound of $O(n \log n)$ on the mixing time of any semi-random transposition shuffle was established by Mossel, Peres, and Sinclair \cite{Mossel2004}.

In the present work we introduce a new class of shuffles called \emph{one-sided transposition shuffles}: these have the defining property that at step $i$ the right hand's position ($\righthand$) is chosen according to some arbitrary distribution on $[n]$, and given the value of $\righthand$ the distribution of the left hand's position ($\lefthand$) is supported on the set $\{1,\dots,\righthand\}$. For the majority of this paper we shall focus on the case when the right and left hands are both chosen uniformly from their possible ranges, but in Section~\ref{GENERALISATIONS} we shall extend our main results to the case when $\righthand$ is chosen using a particular type of weighted distribution.

Our setup differs significantly from the previously studied shuffles mentioned above. The dependence between the left and right hands means that this does not fall into the class of semi-random transpositions. Furthermore, although the generating set for our shuffle is the entire conjugacy class of transpositions, the distribution that we impose upon this set is in general far from uniform. (E.g. when right and left hands are both uniform on their permitted ranges, the probabilities attached to different transpositions range from $1/n^2$ to $1/2n$; see Definition~\ref{LRprob} below.) We note that there can clearly be no universal upper bound on the mixing time of these shuffles without imposing further constraints on the distribution of the left hand, since the shuffle can be slowed arbitrarily by increasing the probability that the two hands choose the same position.

\medskip
In order to state our results we briefly introduce some notation and 
terminology. 

\begin{defn} 
	\label{LRprob}
	The (unbiased) one-sided transposition shuffle $\LR_{n}$ is the ergodic random walk on 
	$S_n$ generated by the following distribution on the conjugacy 
	class of transpositions:
	\[\RL_{n}(\tau) = 
	\begin{cases}
	\frac{1}{n}\cdot\frac{1}{j} &  \text{if } \tau = (i\,j) \text{ for some } 
	1\leq i 
	\leq j 
	\leq n\\
	0 & \text{otherwise.}
	\end{cases}
	\]
	
	We use the convention that all `transpositions' $(i \,i)$ are equal to the identity element $id$, and therefore 
	$\RL_{n}(id) = \frac{1}{n} (1 + \frac{1}{2} + \dots + 
	\frac{1}{n}) = 
	\frac{H_{n}}{n}$, where $H_{k}$ denotes the $k^{\textnormal{th}}$ harmonic 
	number.	We write $P_n^t$ for the $t-$fold convolution of $P_n$ with itself. 
\end{defn}
This shuffle is clearly both reversible and transitive, and has stationary distribution equal to the uniform distribution on $S_{n}$, denoted $\pi_{n}$; that is, $P_n^t(\sigma)\to 1/n!$ as $t\to\infty$ for all $\sigma\in S_n$. In order to study the rate of this convergence, we begin by recalling that the total variation distance between two probability distributions $P$ and $Q$ on $S_{n}$ is defined by 
	\[\lVert P-Q \rVert_{\textnormal{TV}} = \sup_{A\subseteq S_{n}} 
	|P(A) -Q(A)| = \frac{1}{2}\sum_{\sigma \in S_{n}} |P(\sigma) - Q(\sigma)| 
	.\]
Now consider a sequence of distributions $\{Q_{n}\}_{n\in\mathbb{N}}$ on state 
spaces $\{S_n\}_{n\in\mathbb N}$ with corresponding stationary distributions 
$\{\pi_{n}\}_{n\in\mathbb{N}}$. 
We may define the total variation mixing time $t_n^{\textnormal{mix}}(\varepsilon)$ for the distribution $Q_{n}$ as follows:
\[t_n^{\textnormal{mix}}(\varepsilon) = \min\{t : \lVert Q_{n}^{t} - \pi_{n} \rVert_{\textnormal{TV}} < \varepsilon\} .\]
It is well known that many natural sequences of 
this kind exhibit behaviour known as a \emph{cutoff phenomenon}, whereby the 
convergence to equilibrium occurs more and more sharply as $n\to\infty$.
\begin{defn}
	A sequence of distributions $\{Q_{n}\}$ exhibits a (total variation) \emph{cutoff} at time 
	$\{t_{n}\}$ with a \emph{window} of size $\{w_{n}\}$ if $w_{n}=o(t_{n})$ and the 
	following limits hold:
	\begin{eqnarray*}
	\lim_{c\rightarrow \infty}\limsup_{n\rightarrow \infty} \lVert Q_{n}^{t_{n} 
		+cw_{n}} -\pi_n \rVert_{\textnormal{TV}}&=&0 \\
		\lim_{c\rightarrow \infty}\liminf_{n\rightarrow \infty} \lVert Q_{n}^{t_{n} 
		-cw_{n}} -\pi_n \rVert_{\textnormal{TV}}&=&1\,.
	\end{eqnarray*}
\end{defn}
Existence of a cutoff implies that $t_n^{\textnormal{mix}}(\varepsilon) \sim t_n$ for all $\varepsilon\in(0,1)$. The main conclusion of this work is that the one-sided transposition shuffle exhibits a cutoff at time $t_n = n\log n$.
\begin{thm}
	\label{LR main theorem}
	The one-sided transposition shuffle $\RL_{n}$  exhibits a cutoff at time $n\log 
	n$. Specifically, for any $c_{1}>0$ and $c_{2}>2$
	\begin{eqnarray}
	\limsup_{n\rightarrow \infty} \,\lVert \RL_{n}^{n\log n+c_{1}n} - \pi_{n} \rVert_{\textnormal{TV}} & \leq & 
	\sqrt 2e^{-c_{1}} \,, \label{eqn:main_UB}\\
	\text{and} \quad \liminf_{n\to\infty}\,\lVert \RL_{n}^{n\log n - n\log\log n - c_{2}n} - 
	\pi_{n}
	\rVert_{\textnormal{TV}} 
	& \geq & 1 - \frac{\pi^{2}}{6(c_{2}-2)^{2}}\,.\label{eqn:main_LB}
	\end{eqnarray}
\end{thm}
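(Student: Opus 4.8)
The plan is to prove the two displayed bounds separately, both through the correspondence between the eigenvalues of $\RL_n$ and standard Young tableaux. Since $\RL_n$ is reversible it is diagonalisable, and I would use that, writing $\RL_n=\tfrac1n\sum_{j=1}^n\tfrac1j(\id+X_j)$ with $X_j$ the $j$-th Jucys--Murphy element, the Gelfand--Tsetlin basis diagonalises $\RL_n$ with eigenvalues $\eig(T)=\tfrac1n\sum_{j=1}^n\tfrac{1+c_T(j)}{j}$ as $T$ ranges over $\SYT(\lambda)$, $\lambda\vdash n$ (here $c_T(j)$ is the content of the cell of $T$ holding $j$), the eigenvalues attached to $\lambda$ occurring with multiplicity $f^\lambda=|\SYT(\lambda)|$. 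The computation driving everything is the evaluation for the standard representation $\lambda=(n-1,1)$: writing $T_k$ for the tableau of this shape with $k$ in the second row, $\eig(T_k)=1-\tfrac{1+H_n-H_k}{n}$, and --- this is the ``coupon collecting'' at the heart of the argument --- $\eig(T_k)^{t}$ is exactly the probability that position $k$ is chosen by neither hand during the first $t$ steps, which forces the card in position $k$ to stay put.

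For \eqref{eqn:main_UB} I would start from the usual $\ell^2$ (Diaconis--Shahshahani) bound
\[
4\,\lVert \RL_n^{t}-\iso_n\rVert_{\mathrm{TV}}^{2}\ \le\ \sum_{\lambda\vdash n,\ \lambda\ne(n)}f^\lambda\sum_{T\in\SYT(\lambda)}\eig(T)^{2t},
\]
split off $\lambda=(n-1,1)$, whose contribution is $(n-1)\sum_{k=2}^{n}\bigl(1-\tfrac{1+H_n-H_k}{n}\bigr)^{2t}$, and estimate it using $(1-x)^{2t}\le e^{-2tx}$ and $1+H_n-H_k\approx 1+\log(n/k)$: at $t=n\log n+c_1 n$ this piece is at most a constant times $e^{-2c_1}$. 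Every other non-trivial $\lambda$ has $\lambda_1\le n-2$; for these I would bound each $\eig(T)^2$ by $\eig(\best)^2$ --- the eigenvalue of the greedily row-filled (``superstandard'') tableau, which I expect to be the largest eigenvalue of shape $\lambda$ --- show $\eig(\best)\approx 1-\tfrac{n-\lambda_1}{n}$, and use $f^\lambda=O(n^{\,n-\lambda_1})$, so that grouping shapes by the value of $n-\lambda_1=:m$ bounds the rest by a constant multiple of $\sum_{m\ge1}p(m)\,n^{2m}\,(1-m/n)^{2t}$, of order $\sum_{m\ge1}p(m)e^{-2mc_1}$ at this time, a convergent series tending to $0$ as $c_1\to\infty$; adding up gives $\sqrt2\,e^{-c_1}$. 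I expect the real obstacle here to be this uniform control of the bushy shapes, in particular the monotonicity statement that $\eig$ is maximised on $\SYT(\lambda)$ by $\best$ --- the place where the interaction between contents and the weights $1/j$ must be handled carefully.

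For \eqref{eqn:main_LB} I would take the number of fixed points $F(\sigma)=\#\{k:\sigma(k)=k\}$ as a distinguishing statistic. Under $\iso_n$ one has $\E F=1$, and since $\E_{\iso_n}[F(F-1)\cdots(F-j+1)]=1$ for every $j\le n$ the upper tail is tiny: $P_{\iso_n}(F\ge m)\le 1/m!$. Under $\RL_n^{t}$, letting $\mathcal V_t$ be the (random) set of positions chosen by neither hand in the first $t$ steps, one has $F\ge|\mathcal V_t|$ pointwise, and by the probabilistic reading of the eigenvalues above $\E|\mathcal V_t|=\sum_{k}\eig(T_k)^{t}$. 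Evaluating this sum --- it behaves like $\tfrac{n\,e^{-s}}{s+1}$ with $s=t/n$, hence is of order $1$ precisely when $s\approx\log n-\log\log n$, which is the source of the $\log\log n$ term --- shows $\E|\mathcal V_t|$ grows with $c_2$ at $t=n\log n-n\log\log n-c_2 n$. I would then run a second-moment argument on $|\mathcal V_t|$: distinct ``never-selected'' events are only weakly positively correlated, the lone source of correlation being that one step picks exactly that pair of positions (a factor of order $1/(nk')$), so $\Var|\mathcal V_t|=(1+o(1))\,\E|\mathcal V_t|$ and Chebyshev gives concentration; feeding this and the stationary tail bound into the standard distinguishing-statistic inequality gives $\lVert\RL_n^{t}-\iso_n\rVert_{\mathrm{TV}}\ge 1-\tfrac{\pi^2}{6(c_2-2)^2}$, the explicit constant coming out of the variance bound and the choice of threshold. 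The hard part will be this variance estimate --- confirming the positive correlations between never-selected events are genuinely of lower order --- which hinges on the fact that a single transposition selects only two positions. Finally the two bounds combine, via monotonicity of total variation in $t$, to give the cutoff at $n\log n$ with window of order $n\log\log n$.
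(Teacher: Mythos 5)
Your identification $Q_n=\sum_{j}\tfrac1j(\id+X_j)$ with the Jucys--Murphy elements, diagonalised in the Gelfand--Tsetlin basis with eigenvalue $\tfrac1n\sum_{(i,j)}\tfrac{j-i+1}{T(i,j)}$ for each $T\in\SYT(\lambda)$ and multiplicity $d_\lambda$, is correct, and it is a genuinely different (and much shorter) route to Theorem~\ref{Maintheorem} and Lemma~\ref{compute} than the lifting-of-eigenvectors construction in the Appendix. Your probabilistic reading of the $(n-1,1)$ eigenvalues --- $\eig(T_k)$ equals the per-step probability that neither hand touches position $k$ --- is also exactly right.

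The upper bound, however, has a genuine gap: the step ``bound each $\eig(T)^2$ by $\eig(\best)^2$'' is false, because eigenvalues can be negative. One only has $\eig(\worst)\le\eig(T)\le\eig(\best)$ (Lemma~\ref{seq}), so the square is controlled by $\max\{\eig(\best)^2,\eig(\worst)^2\}$, and for tall shapes the left endpoint dominates. Concretely, for $\lambda=(2,1^{n-2})$ your bookkeeping gives $m=n-\lambda_1=n-2$ and hence $\eig(T)^2\lesssim(2/n)^2$, whereas in fact \emph{every} $T\in\SYT(\lambda)$ satisfies $\eig(T)=-1+O(\log n/n)$, since its transpose lies in $\SYT((n-1,1))$ and $\eig(T)+\eig(T')=2H_n/n$ (Lemma~\ref{lem:sumtranspose}). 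The contribution $d_\lambda^2\,\eig(T)^{2t}$ still vanishes at $t=n\log n+c_1n$, but only because of the near-cancellation in $2H_n/n$, not because of the claimed bound $n^{2m}(1-m/n)^{2t}$, which is unprovable as stated. The paper's fix is precisely this transpose identity (Corollary~\ref{flip}): shapes with negative eigenvalues are folded onto their transposes, which have long first rows and equal dimension, at the cost of the factor of $2$ that produces the $\sqrt2$ in \eqref{eqn:main_UB}. Relatedly, $\eig(\best)\approx 1-(n-\lambda_1)/n$ is only accurate for $n-\lambda_1=o(n)$; for $\lambda_1<3n/4$ a separate estimate (the paper's ``small partitions'' case) is needed, though there the dimensions are crushed in any event.

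Your lower bound is a legitimately different route: a global second-moment argument on the number of never-touched positions over the whole deck, versus the paper's coupon-collector domination restricted to the top $n/\log n$ positions. The crux --- that $\Var|\mathcal V_t|=(1+o(1))\,\E|\mathcal V_t|$ because the pair correlations contribute only $e^{t/(nk)}-1$ type corrections --- is asserted rather than proved, but it is true and checkable. Note, though, that if carried out it yields a bound of the form $1-O(e^{-c_2})$; the constant $\pi^2/(6(c_2-2)^2)$ does not ``come out of the variance bound'' in your argument (in the paper it arises from $\sum_i i^{-2}$ in the variance of the geometric waiting times of the dominating chain). To recover the literal statement you would need to observe separately that $e^{-c_2}\le\pi^2/(6(c_2-2)^2)$ for all $c_2>2$, so your (stronger) bound implies \eqref{eqn:main_LB}.
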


The lower bound on the mixing time in \eqref{eqn:main_LB} will be obtained via a coupling argument which allows us to compare the one-sided transposition shuffle to a variation of a coupon-collecting problem. To establish the upper bound for the cutoff in Theorem \ref{LR main 
	theorem} we shall make use of a classical $\ell^{2}$ bound on total variation distance.
\begin{lemma}[Lemma 12.16 \cite{Levin2017}]
	\label{ClassicL2}
	Let $Q$ be the transition matrix for a reversible, transitive, aperiodic Markov chain on finite state space $\mathcal{X}$, 
	with stationary distribution $\pi$. Let the eigenvalues of $Q$ be denoted 
	$\beta_{i}$, with $1=\beta_{1} > \beta_{2} \geq \dots \geq \beta_{|\mathcal{X}|} >
	-1$. Then
	\[4\lVert Q^{t} - \pi \rVert^{2}_{\textnormal{TV}} \leq \sum_{i\neq 1} 
	\beta_{i}^{2t} .\]
\end{lemma}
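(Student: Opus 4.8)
\emph{Proof proposal.} The plan is to follow the standard passage from total variation distance to an $\ell^{2}(\pi)$ quantity, diagonalise $Q$ using reversibility, and then exploit transitivity to remove all dependence on the starting state.

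First I would fix a starting state $x_{0}\in\mathcal{X}$ and set $h_{t}(y) := Q^{t}(x_{0},y)/\pi(y)$, so that
\[
2\lVert Q^{t}-\pi\rVert_{\textnormal{TV}} = \sum_{y}\lvert Q^{t}(x_{0},y)-\pi(y)\rvert = \sum_{y}\pi(y)\,\lvert h_{t}(y)-1\rvert .
\]
Applying the Cauchy--Schwarz inequality in $\ell^{2}(\pi)$ to the last sum yields
\[
4\lVert Q^{t}-\pi\rVert_{\textnormal{TV}}^{2} \;\leq\; \sum_{y}\pi(y)\,(h_{t}(y)-1)^{2} \;=\; \sum_{y}\frac{(Q^{t}(x_{0},y)-\pi(y))^{2}}{\pi(y)},
\]
so it suffices to bound the right-hand side by $\sum_{i\neq 1}\beta_{i}^{2t}$.

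Next I would use reversibility: since $Q$ is self-adjoint on $\ell^{2}(\pi)$, there is an $\ell^{2}(\pi)$-orthonormal basis of real eigenfunctions $f_{1},\dots,f_{\lvert\mathcal{X}\rvert}$ with $Qf_{i}=\beta_{i}f_{i}$, and we may take $f_{1}\equiv 1$. The spectral theorem gives the standard identity $Q^{t}(x_{0},y)/\pi(y)=\sum_{i}\beta_{i}^{t}f_{i}(x_{0})f_{i}(y)$, whose $i=1$ term is identically $1$, so $h_{t}(y)-1=\sum_{i\neq 1}\beta_{i}^{t}f_{i}(x_{0})f_{i}(y)$. Squaring, integrating against $\pi$, and using orthonormality $\sum_{y}\pi(y)f_{i}(y)f_{j}(y)=\delta_{ij}$ to annihilate the cross terms gives
\[
\sum_{y}\frac{(Q^{t}(x_{0},y)-\pi(y))^{2}}{\pi(y)} \;=\; \sum_{i\neq 1}\beta_{i}^{2t}\,f_{i}(x_{0})^{2}.
\]

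Finally I would invoke transitivity to discard the factor $f_{i}(x_{0})^{2}$. Transitivity means a group $G$ of automorphisms of $(\mathcal{X},Q)$ acts transitively on $\mathcal{X}$; hence $\pi$ is uniform, and (taking $y=x_{0}$, $t\mapsto 2t$ in the spectral identity) $\sum_{i\neq 1}\beta_{i}^{2t}f_{i}(x_{0})^{2}=Q^{2t}(x_{0},x_{0})/\pi(x_{0})-1$ is independent of $x_{0}$. Averaging this constant over $x_{0}\in\mathcal{X}$ and interchanging the sums, the coefficient $\frac{1}{\lvert\mathcal{X}\rvert}\sum_{x_{0}}f_{i}(x_{0})^{2}=\sum_{x_{0}}\pi(x_{0})f_{i}(x_{0})^{2}=\lVert f_{i}\rVert_{2,\pi}^{2}=1$, so $\sum_{i\neq 1}\beta_{i}^{2t}f_{i}(x_{0})^{2}=\sum_{i\neq 1}\beta_{i}^{2t}$; combining the three displays proves the claim. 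Aperiodicity enters only to guarantee that $-1$ is not an eigenvalue, so that $\beta_{\lvert\mathcal{X}\rvert}>-1$ and the right-hand side is a genuine (finite, nonnegative, vanishing-as-$t\to\infty$) bound. The one real subtlety is this last step: without transitivity one only obtains the pointwise estimate $4\lVert Q^{t}(x_{0},\cdot)-\pi\rVert_{\textnormal{TV}}^{2}\leq\sum_{i\neq 1}\beta_{i}^{2t}f_{i}(x_{0})^{2}$, and the content of the lemma is precisely that transitivity replaces each $f_{i}(x_{0})^{2}$ by its average value $1$; the Cauchy--Schwarz reduction and the spectral expansion are routine.
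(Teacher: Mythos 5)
Your proof is correct and is essentially the standard argument from the source the paper cites (Levin--Peres, Lemma 12.16): the paper itself gives no proof of this lemma, and your Cauchy--Schwarz reduction, spectral expansion via reversibility, and use of transitivity to replace each $f_{i}(x_{0})^{2}$ by its $\pi$-average of $1$ is exactly the intended route. Your closing remark is also accurate: the inequality as stated needs transitivity to discard the $f_i(x_0)^2$ factors, while aperiodicity only ensures $\beta_{|\mathcal{X}|}>-1$ so the bound decays.
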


The spectrum of the one-sided 
transposition shuffle will be analysed using the 
representation theory of the symmetric group, and our method exhibits some 
algebraic 
features which are of independent interest. The key result is describing an explicit method for obtaining the eigenvectors of $\LR_{n+1}$ 
from 
those of $\LR_n$; this is called \emph{lifting} eigenvectors. It is interesting that the technique of lifting eigenvectors allows the use of representation theory to analyse shuffles which are not constant on conjugacy classes.  
In order to work, this technique requires a particularly close relationship between the shuffle on $n$ and $n+1$ cards -- in our examples of one-sided transposition shuffles on $n+1$ cards, the $(n+1)$-th card only moves when $\righthand =n+1$, and for all other choices of $\righthand$ the shuffle on $n+1$ cards behaves like the shuffle on $n$ cards (see equation \eqref{eqn:lrdifference} for more details). 
An analysis involving the lifting of eigenvectors was first used in the recent work of Dieker and Saliola \cite{dieker2018spectral} which studied the eigenspaces of the  \emph{random-to-random shuffle}; this analysis was used by Bernstein and Nestoridi~\cite{bernstein2019} to prove the existence of a cutoff for this shuffle at time $(3/4)n\log n$.
Lafreni{\`e}re \cite{Lafreni2018} subsequently showed that similar lifting techniques could be applied to more general ``symmetrized shuffling operators''.
In this paper we make several non-trivial changes to the technique developed by Dieker and 
Saliola in order to employ it in the analysis of the one-sided transposition shuffle: we believe that this is the first time such a technique has been shown to be applicable to either a non-symmetrized shuffle or a transposition shuffle. 
We suspect that with suitable modifications the technique of lifting eigenvectors may be used to analyse a whole variety of shuffles for which the standard technique of discrete Fourier transforms fails.

\paragraph{}
Using this method we prove that each eigenvalue of $\LR_n$ corresponds to a 
\emph{standard Young tableau}, and may be computed explicitly from the entries 
in the tableau. That we are able to find such explicit results is remarkable given that the distribution generating the shuffle is not constant on conjugacy classes. Definitions, notation and proofs will be carefully laid out below, but for ease of reference we state here the main two 
results which will underpin our analysis, proofs of which can be found in the Appendix:

\begin{thm}
	\label{Maintheorem}
	The eigenvalues of $\RL_{n}$ are labelled by standard Young tableaux of 
	size $n$, and the eigenvalue represented by a tableau of shape $\lambda$ appears 
	$d_{\lambda}$ times, where $d_\lambda$ is the dimension of $\lambda$.
\end{thm}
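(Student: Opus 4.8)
The plan is to exploit the close relationship between $\LR_n$ and $\LR_{n+1}$ in order to build up the eigenvalues inductively, rather than attempting a direct spectral computation (which is hopeless because $\RL_n$ is not a class function). The key structural input is equation~\eqref{eqn:lrdifference}: on $S_{n+1}$ the shuffle $\LR_{n+1}$ is, up to the $\tfrac{1}{n+1}$-weighted contribution of the moves with $\righthand = n+1$, a copy of $\LR_n$ acting on the first $n$ cards (tensored with the trivial action on card $n+1$). I would first set up the representation-theoretic framework: decompose the group algebra $\mathbb{C}S_n$ into irreducibles indexed by partitions $\lambda \vdash n$, each occurring $d_\lambda$ times, and observe that multiplication by $\RL_n$ (which lies in $\mathbb{C}S_n$ but \emph{not} in its centre) acts on each isotypic component $V_\lambda^{\oplus d_\lambda}$, but in general does \emph{not} act as a scalar on $V_\lambda$. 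So the content of the theorem is genuinely that, despite this, each such block is diagonalisable with a controlled multiset of eigenvalues — one eigenvalue per standard Young tableau of shape $\lambda$, giving $d_\lambda$ eigenvalues total in the block $V_\lambda^{\oplus d_\lambda}$, hence $d_\lambda^2$ over the whole chain, which correctly sums to $n!$ by the RSK/hook-length identity $\sum_{\lambda \vdash n} d_\lambda^2 = n!$.

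The main work is the \emph{lifting} construction. I would proceed by induction on $n$. Assuming that for $\LR_n$ we have a basis of eigenvectors indexed by standard Young tableaux of size $n$, with known eigenvalues, I would construct eigenvectors of $\LR_{n+1}$ as follows. Using branching (the restriction $V_\mu \downarrow_{S_n} = \bigoplus_{\lambda \nearrow \mu} V_\lambda$, where $\lambda \nearrow \mu$ means $\mu$ is obtained from $\lambda$ by adding one box), every standard Young tableau $T$ of shape $\mu \vdash n+1$ is obtained from a unique standard Young tableau $T'$ of shape $\lambda \vdash n$ by recording where the box containing $n+1$ sits. Given the eigenvector $v_{T'}$ of $\LR_n$, I would define a lifted vector $\mathcal{L}(v_{T'})$ in the appropriate subspace of $\mathbb{C}S_{n+1}$ and show it is an eigenvector of $\LR_{n+1}$. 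Concretely, writing $\RL_{n+1} = \tfrac{1}{n+1}\big(n\cdot\iota(\RL_n) + \Delta\big)$ where $\iota$ embeds $\mathbb{C}S_n \hookrightarrow \mathbb{C}S_{n+1}$ and $\Delta = \sum_{i=1}^{n+1}\tfrac{1}{n+1}(i\,,n+1)$ collects the moves involving card $n+1$, the eigenvalue contributed by the new box should come from the action of $\Delta$ (a central-type object relative to the sum over the last row/column), which adds a term depending only on the \emph{content} $c = (\text{column}) - (\text{row})$ of the cell containing $n+1$ in $T$. I expect the eigenvalue attached to $T$ to have the recursive form $\eig(T) = \tfrac{1}{n+1}\big(n\,\eig(T') + f(c)\big)$ for an explicit function $f$ of the content of the last box, and unwinding the recursion gives a closed formula summing content-type contributions over the cells of $T$ in the order $1,2,\dots,n$.

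The steps, in order: (1) establish \eqref{eqn:lrdifference} precisely and rewrite $\RL_{n+1}$ in terms of $\iota(\RL_n)$ and the "last-card" correction $\Delta$; (2) show that $\Delta$ (or the relevant symmetrised operator) acts on each lifted copy of $V_\lambda$ inside $V_\mu\downarrow$ by a scalar determined by the added box — this is the analogue of the Jucys–Murphy element computation, and is where the representation theory does the real work; (3) verify that $\mathcal{L}(v_{T'})$ is a genuine eigenvector of $\LR_{n+1}$, i.e. that lifting is compatible with both summands, which requires checking that $\iota(\RL_n)$ preserves the lift and acts with eigenvalue $\eig(T')$; (4) check the count: each $T'$ of shape $\lambda$ gives one lifted eigenvector per valid added box, the branching rule guarantees these assemble into a full eigenbasis of $V_\mu^{\oplus d_\mu}$ for each $\mu$, and summing gives all $n!$ eigenvectors; (5) read off the multiplicity statement from the fact that the eigenvalue depends only on the \emph{shape-and-filling} data of $T$, with each shape $\mu$ contributing its $d_\mu$ distinct tableaux inside a $d_\mu$-dimensional space, repeated $d_\mu$ times across the isotypic component. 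The main obstacle is step~(2)–(3): because $\RL_n$ is not central, $v_{T'}$ is an eigenvector only as an element of a specific copy of $V_\lambda$ (not of the whole isotypic block), so the lifting map must be defined carefully enough that $\iota(\RL_n)$ still sees it as an eigenvector while simultaneously $\Delta$ acts diagonally on it; reconciling these two requires choosing the right Gelfand–Tsetlin-type basis (the Young seminormal/orthogonal basis adapted to the tower $S_1 \subset S_2 \subset \dots \subset S_n$) so that both operators are simultaneously triangular, and then arguing the off-diagonal terms vanish or can be absorbed. Once that compatibility is in place, the closed formula for the eigenvalues and the $d_\lambda$-multiplicity both follow by bookkeeping over the branching tree.
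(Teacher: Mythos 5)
Your outline is essentially correct, and it reaches the theorem by a genuinely different mechanism from the paper's. The paper never invokes Jucys--Murphy/Gelfand--Tsetlin theory: it works in the modules $M^\lambda$ spanned by words, introduces the adding and switching operators $\sh_a$ and $\Theta_{b,a}$, proves the intertwining identity of Theorem~\ref{master}, and defines lifting operators $\pro_a^{\lambda,\mu}=\iso^{\mu}\circ\sh_a$ which are shown (via Young's rule and Schur's lemma) to be injective $\mathfrak{S}_n$-morphisms carrying eigenvectors of $Q_n$ to eigenvectors of $Q_{n+1}$ with the stated increment; multiplicity-free branching then assembles these lifts into an eigenbasis of each $S^\mu$, and the $d_\lambda$-fold multiplicity comes from the regular module, exactly as in your step (5). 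Your route instead restricts each irreducible $S^\mu$ of $S_{n+1}$ to $S_n$ and observes that $Q_{n+1}=\iota(Q_n)+\frac{1}{n+1}\bigl(\id+X_{n+1}\bigr)$ with $X_{n+1}=\sum_{i=1}^{n}(i\ \,n{+}1)$ the Jucys--Murphy element. This works, and in fact more easily than you fear: the obstacle you flag in steps (2)--(3) (simultaneous triangularity, ``off-diagonal terms'') dissolves, because the branching $S^\mu\!\downarrow_{S_n}\cong\bigoplus_{\lambda\nearrow\mu}S^\lambda$ is multiplicity-free, so each summand is a full isotypic component and is therefore preserved by the action of every element of $\mathbb{C}[S_n]$, in particular by $\iota(Q_n)$; moreover $X_{n+1}$ commutes with all of $\mathbb{C}[S_n]$, so by Schur's lemma it acts on each summand by a scalar, namely the content $j-i$ of the added box. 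Hence on each summand $Q_{n+1}$ is literally the $Q_n$-action plus the scalar $\frac{j-i+1}{n+1}$, no lifting map of individual vectors is needed: phrase the induction as ``for every $\lambda\vdash n$, the action of $Q_n$ on $S^\lambda$ is diagonalisable with eigenvalues indexed by $\SYT(\lambda)$'' and it transfers verbatim to each branching summand, recovering the paper's increment from Theorem~\ref{liftappendix} and the labelling by standard tableaux. What each approach buys: yours is shorter for the eigenvalue statement and makes the content increment transparent via standard JM theory; the paper's hands-on construction produces explicit (injectively lifted) eigenvectors in word modules without appealing to GZ machinery, and that recursive structure is what carries over directly to the weighted shuffles of Section~\ref{GENERALISATIONS}.
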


\begin{lemma}
	\label{compute}
	For a tableau $T$ of shape $\lambda$ the eigenvalue corresponding to $T$ is given by
	\begin{eqnarray}
	\label{eigenvaluesum}
	\eig(T) = \frac{1}{n}\sum_{\substack{\textnormal{boxes}\\ (i,j)}} 
	\frac{j-i+1}{T(i,j)}\,,
	\end{eqnarray}
	where the sum is performed over all boxes $(i,j)$ in $T$.
\end{lemma}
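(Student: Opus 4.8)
The plan is to bypass the absence of a class-function structure by recognising that, although $\RL_n$ is not central in $\mathbb{C}[S_n]$ and so has no scalar Fourier transform, it does lie in the commutative Gelfand--Tsetlin subalgebra -- which is exactly why its eigenvectors are indexed by standard Young tableaux. First I would regroup the generating measure according to the larger entry of each transposition:
\[
\RL_n \;=\; \sum_{1\le i\le j\le n}\frac{1}{nj}\,(i\,j)
\;=\;\frac1n\sum_{j=1}^n\frac1j\Bigl(\id+\sum_{i=1}^{j-1}(i\,j)\Bigr)
\;=\;\frac{H_n}{n}\,\id+\frac1n\sum_{j=2}^n\frac{X_j}{j},
\]
where $X_j=\sum_{i<j}(i\,j)$ is the $j$-th Jucys--Murphy element of $\mathbb{C}[S_n]$. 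Thus the one-sided structure of the shuffle is precisely what produces the harmonic weights $1/j$, and it writes $\RL_n$ as an explicit linear combination of the $X_j$ (plus a scalar).

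Next I would invoke the standard Okounkov--Vershik facts: the $X_j$ commute, they are simultaneously diagonalised on each irreducible module $S^\lambda$ by the Young seminormal (Gelfand--Tsetlin) basis $\{v_T : T\in\SYT(\lambda)\}$, and $X_j$ acts on $v_T$ by the scalar $c_T(j)$, the content (column minus row) of the box of $T$ containing the entry $j$. Substituting into the display above gives that $\RL_n$ acts on $v_T$ by
\[
\Bigl(\tfrac{H_n}{n}+\tfrac1n\sum_{j=1}^n\tfrac{c_T(j)}{j}\Bigr)v_T
\;=\;\frac1n\sum_{j=1}^n\frac{1+c_T(j)}{j}\,v_T,
\]
and re-indexing the sum over the boxes $(i,j)$ of $T$ (so the entry in box $(i,j)$ is $T(i,j)$ and its content is $j-i$) turns the scalar into $\eig(T)=\frac1n\sum_{(i,j)}\frac{j-i+1}{T(i,j)}$, which is the claimed formula. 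Since the transition operator of the walk is convolution by $\RL_n$ on the regular representation, each such $\eig(T)$ occurs with multiplicity $d_\lambda=\dim S^\lambda$, so this argument simultaneously re-proves Theorem~\ref{Maintheorem}. The base case $n=1$ is immediate (the single box gives $\eig=1$), and I would sanity-check $n=2$ and $n=3$ against a direct computation of the transition matrix.

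The real work -- and the place I expect the main obstacle -- is the representation-theoretic input about how the Jucys--Murphy elements act on the seminormal basis. One can simply cite it, but if the paper prefers to remain self-contained and to stay inside the combinatorial framework it develops, this must be reconstructed via the \emph{lifting of eigenvectors}. On that route one argues by induction on $n$: decompose $\RL_n=\frac{n-1}{n}\,\iota(\RL_{n-1})+\frac{1}{n^2}\bigl(\id+X_n\bigr)$ with $\iota\colon\mathbb{C}[S_{n-1}]\hookrightarrow\mathbb{C}[S_n]$ the natural inclusion, note that deleting the box containing $n$ sends $T\in\SYT(\lambda)$ to some $T'\in\SYT(\mu)$ with $\mu=\lambda$ minus a corner, and show the lifted eigenvector attached to $T$ is at once an eigenvector of $\iota(\RL_{n-1})$ with eigenvalue $\eig(T')$ and of $X_n$ with eigenvalue the content of $\lambda/\mu$. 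This yields the recursion $\eig(T)=\frac{n-1}{n}\eig(T')+\frac{1+c}{n^2}$, which telescopes to the stated sum. Verifying that the lifting construction is compatible with branching -- equivalently, that the iteratively lifted vectors are the Gelfand--Tsetlin basis -- is the technical heart, but it is exactly the machinery the body of the paper builds for the cutoff argument, so it should be available here.
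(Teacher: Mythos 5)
Your proposal is correct, but it takes a genuinely different route from the paper. You write $\RL_n=\frac{H_n}{n}\id+\frac1n\sum_{j=2}^n\frac1j X_j$ with $X_j=\sum_{i<j}(i\,j)$ the Jucys--Murphy elements, and then import the classical Young seminormal/Okounkov--Vershik fact that each $X_j$ acts on the Gelfand--Tsetlin basis vector $v_T$ of $S^\lambda$ by the content of the box of $T$ containing $j$; the formula $\eig(T)=\frac1n\sum_{(i,j)}\frac{j-i+1}{T(i,j)}$, and the multiplicity statement of Theorem~\ref{Maintheorem}, then fall out in one line from the decomposition of the regular representation. The paper instead proves the formula with the lifting machinery it builds in the Appendix: Theorem~\ref{liftappendix} shows that lifting an eigenvector along the addition of the box $(i,j)$ changes the eigenvalue of the unnormalised operator $Q_n$ by $\frac{j-i+1}{T(i,j)}$, and Lemma~\ref{compute} is obtained by constructing $T$ box by box along its labelling and summing these increments (your closing fallback sketch, with the decomposition $\RL_n=\frac{n-1}{n}\RL_{n-1}+\frac{1}{n^2}(\id+X_n)$ and the telescoping recursion $\eig(T)=\frac{n-1}{n}\eig(T')+\frac{1+c}{n^2}$, is essentially the paper's argument in different notation). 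What your main route buys is brevity and a conceptual explanation --- the shuffle lies in the commutative Gelfand--Tsetlin subalgebra, which is exactly why standard Young tableaux index its eigenvalues --- and it extends verbatim to the biased shuffle of Section~\ref{GENERALISATIONS}, since $\BLR_{n,w}$ is again a linear combination of the identity and the $X_j$, recovering Lemma~\ref{biasedCompute} as well. What it costs is self-containedness: the diagonal action of the $X_j$ on the seminormal basis must be cited from the literature, whereas the paper's lifting construction is developed from scratch and doubles as the proof of Theorem~\ref{Maintheorem} without invoking Gelfand--Tsetlin theory.
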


The organisation of the remainder of the paper is as follows. In 
Section~\ref{UPPERBOUND} we first explore some important properties of 
the eigenvalues for $\LR_{n}$,  and then  use these to prove the upper bound on the mixing time given by Theorem~\ref{LR main theorem}. The 
corresponding lower bound will be proved in Section~\ref{LOWERBOUND}, using 
entirely probabilistic arguments. Finally, in Section~\ref{GENERALISATIONS} we 
will consider a generalisation of the one-sided transposition shuffle, in which 
$\righthand$ is chosen according to a non-uniform distribution: we show that the 
algebraic technique developed for $\RL_{n}$ holds in this more general setting, and that 
the well-known mixing time for the (standard) random transposition shuffle may 
be recovered in this way.

\section{Upper Bound}
\label{UPPERBOUND}
\subsection{Eigenvalue Analysis}
\label{EIGENVALUES}

Before we establish relations of the eigenvalues for $\LR_{n}$ we first will 
recall some standard definitions about partitions, Young diagrams and Young 
tableaux \cite{james_1984}. 
A \emph{partition} of $n$ 
is a tuple of positive integers $\lambda = (\lambda_{1}, \dots, \lambda_{r})$ 
such that, $\sum \lambda_{i} =n$, and $\lambda_{1} \geq \dots \geq \lambda_{r}$.
We write $\lambda \vdash n$. 
We call $r$ the \emph{length} of $\lambda$ and denote it $l(\lambda)$. We denote the 
partition $(1,\dots,1) \vdash n$ as $(1^{n})$. 

Every partition $\lambda$ has an 
associated 
\emph{Young diagram}, 
made by forming a left adjusted stack of boxes with rows labelled downwards and 
row $i$ having 
$\lambda_{i}$ boxes.  We often blur the distinction between a partition and 
its Young diagram, e.g. $(4,2) = 	
\ytableausetup{mathmode,baseline,aligntableaux=center,boxsize=0.6em}
\ydiagram{4,2}$. 
We may refer to the boxes of a diagram $\lambda$ by using 
coordinates $(i,j)$ to mean the box in the $i^{th}$ row and $j^{th}$ column. 
Given a partition $\lambda\vdash n$, we may form the 
\emph{transpose} of $\lambda$, denoted $\lambda^{\prime}$, by turning rows into 
columns in the Young diagram. We have $\lambda \vdash n$ if and only if $\lambda^{\prime} \vdash n$.
 
We have a partial order on partitions of $n$ called the \emph{dominance order}: 
in terms of Young diagrams, for two partitions $\mu,\lambda \vdash n$, we write 
$\lambda \trianglerighteq \mu$ if we can form $\mu$ by moving boxes of $\lambda$
down and to the left. 
We have $\lambda \trianglerighteq \mu$ if and only if $\mu'  \trianglerighteq \lambda'$ \cite[Lemma 1.4.11]{james_1984}.

Given two partitions $\mu,\lambda$ of different sizes, we 
say $\mu \subset \lambda$ if $\mu$ is fully contained in $\lambda$ when we 
align the Young diagrams of $\mu$ and $\lambda$ at the top left corners;
equivalently, if we write $\lambda = (\lambda_1,\dots,\lambda_r)$ and $\mu=(\mu_1,\dots,\mu_s)$, 
this means that $s\leq r$ and $\mu_i\leq \lambda_i$ for each $1\leq i \leq s$.

Given a partition $\lambda\vdash n$, we may form a \emph{Young tableau} $T$ by 
putting numbers into the boxes of (the Young diagram of) $\lambda$. 
A \emph{standard Young tableau} $T$ is one in which the numbers 
$1,\dots, n$ occur once each and such that the values 
are increasing across rows and down columns. 
The set of standard Young tableaux of shape $\lambda$  is 
denoted by $\SYT(\lambda)$. The size of the set $\SYT(\lambda)$ is
called the \emph{dimension} of $\lambda$, denoted $d_{\lambda}$. 
For a tableau $T$, form the transpose of $T$ denoted $T^{\prime}$ by turning rows 
into columns and preserving the value in each box. If $T \in \SYT(\lambda)$, 
then $T^{\prime} \in \SYT(\lambda^{\prime})$.
Given a tableau $T$,  let $T(i,j)$ be the value of box $(i,j)$ in $T$ if it 
exists and undefined otherwise. 

For any $\lambda \vdash n$, define the tableau 
$\best$ by inserting the numbers $1,\dots, n$ from left to right. Define the 
tableau $\worst$ by inserting the numbers $1,\dots, n$ from top to bottom.
From the introduction we know the eigenvalues for $\LR_{n}$ are labelled by 
Young tableaux of size $n$, and Lemma \ref{compute} gives an explicit 
formula for the eigenvalue associated to a given tableau. 
Before applying the bound in Lemma \ref{ClassicL2}, we first 
investigate relationships between the eigenvalues. 
We show that the eigenvalue corresponding to $T \in \SYT(\lambda)$ is bounded by 
the eigenvalues for $\best$ and $\worst$.  
To simplify our upper bound calculation, 
we prove that we only need to consider the partitions for which $\best$ gives a positive eigenvalue. Lastly, we prove that the eigenvalues corresponding to $\best,\worst$ decrease 
as one moves down the dominance order of partitions. 
We first illustrate some of the preceding definitions and discussion with an example.

\begin{example}
Let $\lambda=(3,2)\vdash 5$.
Then $\SYT(\lambda)$ has five elements, so $d_\lambda = 5$.
These $5$ tableaux, together with the associated eigenvalues calculated using Lemma \ref{compute},
are given in Table~\ref{eigenvaluetable} below.  
In this table, $\best$ is the first tableau listed and $\worst$ is the last one; we can see that the corresponding eigenvalues bound all the others.

\begin{table}[H]
	\centering
	{
		\renewcommand{\arraystretch}{1.5}
	\begin{tabular}{c|c|c|c|c|c}
		$T \in \SYT((3,2))$ &
		$\ytableausetup{mathmode,baseline,aligntableaux=bottom,boxsize=1em}
		\begin{ytableau} 1 & 2 & 3\\ 4 & 5 
		\end{ytableau}$ & 
		$\ytableausetup{mathmode,baseline,aligntableaux=bottom,boxsize=1em}
		\begin{ytableau} 1 & 2 & 4\\ 3 & 5
		\end{ytableau} $ & 
		$\ytableausetup{mathmode,baseline,aligntableaux=bottom,boxsize=1em}
		\begin{ytableau} 1 & 2 & 5\\ 3 & 4
		\end{ytableau} $ & 
		$\ytableausetup{mathmode,baseline,aligntableaux=bottom,boxsize=1em}
		\begin{ytableau} 1 & 3 & 4\\ 2 & 5
		\end{ytableau} $ & 
		$\ytableausetup{mathmode,baseline,aligntableaux=bottom,boxsize=1em}
		\begin{ytableau} 1 & 3 & 5\\ 2 & 4
		\end{ytableau}$   \\\hline
		$\eig(T)$ & $0.64$	& $0.59$ & $0.57$ &  $0.52\overline{3}$	& $0.50\overline{3}$
	\end{tabular}
}
	\caption{Eigenvalues corresponding to $T \in \SYT((3,2))$.}
	\label{eigenvaluetable}
\end{table}
\end{example}

We can now begin our analysis of the eigenvalues by showing how swapping numbers in a tableau affects the corresponding eigenvalue.

\begin{lemma}
	\label{swapping}
	Let  $T$ be a general Young tableau. Suppose we form a new tableau $S$ by swapping 
	two values in $T$ which have coordinates $(i_{1},j_{1}),(i_{2},j_{2})$ in 
	$T$. WLOG assume  $T(i_{1},j_{1}) < 
	T(i_{2},j_{2})$. Then the change in corresponding eigenvalues satisfies the following inequality:
	\[\eig(S) - \eig(T) 
	\begin{cases}
	\geq 0 & \text{ if } (i_{1}-i_{2})+ (j_{2}-j_{1}) \geq 0\\
	< 0 & \text{ if }  (i_{1}-i_{2}) + (j_{2}-j_{1})<0\,.
	\end{cases}
	\]	
	Importantly, if we move the larger entry down and to the left the change 
	in eigenvalue is non-negative; if it moves up and to the right then the change is negative.
\end{lemma}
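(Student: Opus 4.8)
The plan is to compute the difference $\eig(S)-\eig(T)$ directly from the explicit formula in Lemma~\ref{compute} and observe that almost everything cancels. Since $S$ is obtained from $T$ by swapping only the two values $a:=T(i_1,j_1)$ and $b:=T(i_2,j_2)$ (with $a<b$), every box $(i,j)$ outside $\{(i_1,j_1),(i_2,j_2)\}$ contributes the identical term $\frac{j-i+1}{T(i,j)}$ to both $\eig(T)$ and $\eig(S)$, so
\[
n\bigl(\eig(S)-\eig(T)\bigr)
=\left(\frac{j_1-i_1+1}{b}+\frac{j_2-i_2+1}{a}\right)
-\left(\frac{j_1-i_1+1}{a}+\frac{j_2-i_2+1}{b}\right).
\]
The key point is that the ``content-type'' quantity attached to each box, $c_1:=j_1-i_1+1$ and $c_2:=j_2-i_2+1$, stays with the box and not with the entry, because transposing values preserves the shape.

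Next I would simplify the right-hand side. Grouping by denominator gives
\[
n\bigl(\eig(S)-\eig(T)\bigr)
=\frac{c_2-c_1}{a}-\frac{c_2-c_1}{b}
=(c_2-c_1)\left(\frac{1}{a}-\frac{1}{b}\right)
=(c_2-c_1)\cdot\frac{b-a}{ab}.
\]
Since $a<b$ we have $\frac{b-a}{ab}>0$, so the sign of $\eig(S)-\eig(T)$ is exactly the sign of $c_2-c_1=(j_2-i_2+1)-(j_1-i_1+1)=(i_1-i_2)+(j_2-j_1)$. This is precisely the dichotomy claimed: the difference is $\geq 0$ when $(i_1-i_2)+(j_2-j_1)\geq 0$ (with equality when $c_1=c_2$), and strictly negative when $(i_1-i_2)+(j_2-j_1)<0$. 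The final sentence of the lemma is just the interpretation of this: moving the larger entry $b$ to box $(i_1,j_1)$ means the new box has content $c_1$; this box lies down-and-to-the-left of the old box $(i_2,j_2)$ exactly when $i_1\geq i_2$ and $j_1\leq j_2$, which forces $c_1\leq c_2$ and hence a non-negative change, while up-and-to-the-right forces $c_1>c_2$ and a negative change.

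There is essentially no obstacle here; the only thing to be slightly careful about is bookkeeping — making sure one tracks which content goes with which denominator after the swap, and noting that the formula in Lemma~\ref{compute} applies to arbitrary Young tableaux, not just standard ones, so no hypothesis about $S$ or $T$ being standard is needed (indeed the lemma statement says ``general Young tableau''). One should also remark that the case $(i_1-i_2)+(j_2-j_1)=0$ is absorbed into the first branch, which is consistent with the $\geq$ there. I would present the computation in two or three displayed lines and then state the sign conclusion, with a one-line remark translating ``down and to the left'' into the inequality $c_1\leq c_2$.
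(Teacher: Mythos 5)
Your proposal is correct and is essentially identical to the paper's proof: both compute the difference directly from Lemma~\ref{compute}, cancel the contributions of the unchanged boxes, and factor the result as $\frac{(i_1-i_2)+(j_2-j_1)}{n}\left(\frac{1}{T(i_1,j_1)}-\frac{1}{T(i_2,j_2)}\right)$, whose sign is determined by the first factor since $T(i_1,j_1)<T(i_2,j_2)$. The concluding interpretation of ``down and to the left'' is also handled the same way.
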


\begin{proof}
	Since $S$ and $T$ agree in all but two entries, Lemma~\ref{compute} tells us that the difference in eigenvalues is given by 
	\begin{align*}
	\eig(S) - \eig(T) &= \frac{1}{n}\left(\frac{j_{1}-i_{1}+1}{T(i_{2},j_{2})} + 
	\frac{j_{2}-i_{2}+1}{T(i_{1},j_{1})}\right) - 
	\frac{1}{n}\left(\frac{j_{1}-i_{1}+1}{T(i_{1},j_{1})} + 
	\frac{j_{2}-i_{2}+1}{T(i_{2},j_{2})}\right) \\
	&= \frac{(i_{1}-i_{2})+(j_{2}-j_{1})}{n} \left(\frac{1}{T(i_{1},j_{1})} - 
	\frac{1}{T(i_{2},j_{2})} 
	\right).\qedhere
	\end{align*}
\end{proof}

This result allows us to prove that the eigenvalues for $\worst$ and $\best$ bound all others for $\SYT(\lambda)$.

\begin{lemma}
	\label{seq}
	Let $\lambda\vdash n$.
	For any $T \in \SYT(\lambda)$ we have the following 
	inequality:
	\begin{eqnarray}
	\eig(\worst) \leq \eig(T) \leq 
	\eig(\best).
	\end{eqnarray}
\end{lemma}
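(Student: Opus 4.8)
The plan is to prove the two bounds separately: the upper bound $\eig(T)\le\eig(\best)$ by a sorting argument built on Lemma~\ref{swapping}, and the lower bound $\eig(\worst)\le\eig(T)$ by transposing tableaux. For the upper bound it is convenient to encode a tableau $T$ by the permutation $\rho_T$ of $\{1,\dots,n\}$ recording, for each value $v$, the rank of the box containing $v$ in the \emph{reading order} of the cells of $\lambda$ (read the rows from top to bottom, and each row from left to right). Then $T=\best$ exactly when $\rho_T$ is the identity, since $\best$ is the tableau whose reading word is $1,2,\dots,n$.

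The heart of the matter is the following structural fact about an arbitrary $T\in\SYT(\lambda)$: for every $v<n$, if $(a,b)$ is the cell of $v$ and $(c,d)$ the cell of $v+1$, then $v+1$ can lie neither strictly above-and-left of $v$ ($c<a$, $d<b$ is impossible) nor strictly below-and-right of $v$ ($c>a$, $d>b$ is impossible). To see this, consider the Young subdiagram $Y_v\subseteq\lambda$ of cells whose $T$-value is $\le v$. Since $v$ is the largest value in $Y_v$, the cell $(a,b)$ is a removable corner of $Y_v$, so row $a$ of $Y_v$ has exactly $b$ cells; since $Y_v\cup\{(c,d)\}=Y_{v+1}$ is again a Young diagram, $(c,d)$ is an addable corner of $Y_v$, so row $c$ of $Y_v$ has exactly $d-1$ cells and, when $c>1$, row $c-1$ has at least $d$ cells. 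If $c<a$, monotonicity of row lengths in $Y_v$ gives $d-1\ge b$, contradicting $d<b$; if $c>a$ then $c>1$, and row $c-1$ of $Y_v$ — which has length $b$ when $c-1=a$ and length $\le b$ when $c-1>a$ — has at least $d>b$ cells, again a contradiction. I expect this lemma, together with checking that it interacts correctly with the swaps below, to be the main obstacle; the rest is bookkeeping.

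Granting it, suppose $T\neq\best$, so $\rho_T$ is not the identity and there is a $v$ with $\rho_T(v)>\rho_T(v+1)$, i.e.\ the cell of $v$ comes after that of $v+1$ in reading order. In a standard tableau $v$ and $v+1$ cannot lie in the same row with $v$ to the right, nor in the same column with $v$ above, so this forces $v+1$ into a strictly higher row than $v$; the structural fact then forces $v+1$ strictly above-and-to-the-right of $v$. In particular $v$ and $v+1$ occupy different rows and different columns, so exchanging them yields another $S\in\SYT(\lambda)$, and since the larger entry $v+1$ has moved strictly down and to the left, Lemma~\ref{swapping} gives $\eig(S)>\eig(T)$. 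This exchange also strictly decreases the number of inversions of $\rho$, so the procedure terminates, and it can only terminate at a tableau with no such $v$, which by the structural fact is one whose reading word is increasing, i.e.\ $\best$. Hence $\eig(T)\le\eig(\best)$ for every $T\in\SYT(\lambda)$.

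For the lower bound, note that $T\mapsto T'$ is a bijection $\SYT(\lambda)\to\SYT(\lambda')$, and that transposition sends the cell $(i,j)$ of content $j-i+1$ to the cell $(j,i)$ of content $i-j+1=2-(j-i+1)$ while preserving the entry; hence by Lemma~\ref{compute},
\[
\eig(T)+\eig(T')=\frac{2}{n}\sum_{v=1}^{n}\frac1v=\frac{2H_n}{n}.
\]
Thus minimising $\eig$ over $\SYT(\lambda)$ is the same as maximising $\eig$ over $\SYT(\lambda')$; by the upper bound already proved the maximum on $\SYT(\lambda')$ is attained at $T_{\lambda'}^{\rightarrow}$, whose transpose is precisely $\worst$ (filling $\lambda'$ along its rows becomes filling $\lambda$ down its columns). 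Therefore $\eig(\worst)\le\eig(T)$ for all $T\in\SYT(\lambda)$, which completes the proof.
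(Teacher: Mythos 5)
Your proof is correct and takes essentially the same route as the paper's: both rest on Lemma~\ref{swapping}, bubble-sorting $T$ into $\best$ by swaps of consecutive values that can only increase the eigenvalue, and both obtain the lower bound from the upper bound by symmetry under transposition. Your write-up is in fact somewhat more careful than the paper's (the structural fact about where $v+1$ can sit relative to $v$, and termination via inversion counting, are left implicit there), and your use of $\eig(T)+\eig(T')=2H_n/n$ simply anticipates the paper's own Lemma~\ref{lem:sumtranspose}.
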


\begin{proof}
	Reading across the rows of $T$, beginning with the first row, identify the first box in which $T$ and $\best$ have different entries; write $(i,j)$ for the coordinates of this box. Due to the way in which $\best$ is constructed, $T(i,j)>\best(i,j)$. Furthermore, the number $T(i,j)-1$ must occur strictly below and to the left of $T(i,j)$, since $T$ is a standard Young tableau. Swapping entries $T(i,j)-1$ and $T(i,j)$ produces a new element of $\SYT(\lambda)$ 
	whose corresponding eigenvalue is no smaller than $\eig(T)$, thanks to Lemma~\ref{swapping}.
	
	We can therefore iterate this procedure, swapping $T(i,j)-1$ with 
	$T(i,j)-2$ etc, until $T(i,j) = \best(i,j)$. Note that at this point the 
	entries in the first $T(i,j)$ boxes of $T$ and $\best$ must agree, moreover 
	these entries are now fixed in place. We now 
	proceed to the next box in which $T$ and $\best$ differ, and repeat: this 
	results in a sequence of swaps which make the entries of $T$ agree with 
	those in $\best$, and which can only ever cause the corresponding 
	eigenvalue to increase. This proves the second inequality in 
	Lemma~\ref{seq}, and the first one follows via an analogous argument on the 
	columns of $T$.
		
\end{proof}

The next result and its corollary establish that when bounding eigenvalues, we only need to consider
those given by $\best$. 

\begin{lemma}
	\label{lem:sumtranspose}
	Let $\lambda\vdash n$.
	For any $T \in \SYT(\lambda)$ we have 
	\[\eig(T) + \eig(T^{\prime}) = \frac{2H_{n}}{n}.\]
\end{lemma}

\begin{proof}
	Let $T\in SYT(\lambda)$. Then
	\begin{align*} \eig(T) +\eig(T')&= \frac 1n \sum_{\substack{\textnormal{boxes}\\ (i,j)\in T}} 
	\frac{j-i+1}{T(i,j)} + \frac 1n \sum_{\substack{\textnormal{boxes}\\ (j,i)\in T'}} 
	\frac{i-j+1}{T'(j,i)} \\
	&= \frac 1n \sum_{\substack{\textnormal{boxes}\\ (i,j)\in T}} 
	\frac{j-i+1}{T(i,j)} + \frac 1n \sum_{\substack{\textnormal{boxes}\\ (i,j)\in T}} 
	\frac{-(j-i)+1}{T(i,j)} = \frac {2H_n}{n}.
	\end{align*}
\end{proof}

\begin{corollary}
	\label{flip}
	Let $\lambda =(\lambda_{1},\dots,\lambda_{r}) \vdash n$, and suppose we 
	have 
	$\eig(\worst) \leq 0$, then we 
	have 
	\begin{eqnarray}
	\label{case2}
	\eig(T_{\lambda^{\prime}}^{\rightarrow}) \geq 
	|\eig(\worst)| \geq  0.
	\end{eqnarray}
\end{corollary}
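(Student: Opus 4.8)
The plan is to combine Lemma~\ref{lem:sumtranspose} with the ordering result of Lemma~\ref{seq}. Since $T_{\lambda'}^{\rightarrow}\in\SYT(\lambda')$, Lemma~\ref{seq} applied to the partition $\lambda'$ gives $\eig(T_{\lambda'}^{\rightarrow})\geq\eig(T_{\lambda'}^{\downarrow})$; but the key observation is that transposing a Young tableau sends $\best$ to $(\best)' = \worst$ in the conjugate shape (inserting numbers left-to-right in $\lambda$ and then transposing is the same as inserting numbers top-to-bottom in $\lambda'$). Hence $(\worst)' = T_{\lambda'}^{\rightarrow}$, and Lemma~\ref{lem:sumtranspose} applied to the tableau $\worst\in\SYT(\lambda)$ yields
\[
\eig(\worst) + \eig(T_{\lambda'}^{\rightarrow}) = \frac{2H_n}{n}.
\]

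From here the corollary is almost immediate. We are assuming $\eig(\worst)\leq 0$, so $|\eig(\worst)| = -\eig(\worst)$. Rearranging the displayed identity gives $\eig(T_{\lambda'}^{\rightarrow}) = \frac{2H_n}{n} - \eig(\worst) = \frac{2H_n}{n} + |\eig(\worst)|$. Since $H_n > 0$ for all $n\geq 1$, this is strictly larger than $|\eig(\worst)|$, which in turn is $\geq 0$ by definition of absolute value; this establishes both inequalities in \eqref{case2}. (If one wants to avoid even invoking positivity of $H_n$, note $\frac{2H_n}{n}\geq 0$ suffices.)

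I do not anticipate a genuine obstacle here — the statement is a short corollary. The only point requiring a moment's care is the identification $(\worst)' = T_{\lambda'}^{\rightarrow}$: one should check directly from the definitions of $\best$ and $\worst$ that transposing the ``fill left-to-right'' tableau of shape $\lambda$ produces the ``fill top-to-bottom'' tableau of shape $\lambda'$, using the fact (recorded in the preamble) that transposition maps $\SYT(\lambda)$ bijectively to $\SYT(\lambda')$ while preserving box values. Once that is in hand, everything is a one-line rearrangement of the transpose-sum identity, and no estimates beyond $H_n\geq 0$ are needed.
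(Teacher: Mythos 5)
Your proposal is correct and takes essentially the same route as the paper: both rest on the transpose-sum identity of Lemma~\ref{lem:sumtranspose} applied to $\worst$ (via the identification $(\worst)' = T_{\lambda'}^{\rightarrow}$, which the paper uses implicitly and you spell out) followed by the one-line rearrangement $\eig(T_{\lambda'}^{\rightarrow}) = \tfrac{2H_n}{n} - \eig(\worst) \ge -\eig(\worst) = |\eig(\worst)| \ge 0$. Your opening appeal to Lemma~\ref{seq} is not actually needed and plays no role in the argument you give.
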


\begin{proof}
	It follows from Lemma~\ref{lem:sumtranspose} that $\eig(T_{\lambda'}^\rightarrow) + \eig(T_{\lambda}^\downarrow) = 2H_n/n$. Thus if $\eig(\worst) \leq 0$ then 
	\begin{align*}
	\eig(T_{\lambda'}^\rightarrow) = \frac{2H_n}{n} - \eig(T_{\lambda}^\downarrow) \ge - \eig(T_{\lambda}^\downarrow) = |\eig(T_{\lambda}^\downarrow)| \ge 0\,.
	\end{align*}
\end{proof}

We end this section by establishing a relationship between eigenvalues and the dominance ordering on partitions.
\begin{lemma}
	\label{order}
	Let $\lambda, \mu \vdash n$. If $\lambda \trianglerighteq \mu$ then 
 	\begin{eqnarray}
	\eig(\best)& \geq & 
	\eig(T_\mu^{\rightarrow})\\
	\label{down}
	\text{and }\quad \eig(\worst) & \geq &
	\eig(T_\mu^{\downarrow}).
	\end{eqnarray}
\end{lemma}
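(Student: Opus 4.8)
The plan is to reduce the statement to a single ``elementary move'' in the dominance order and then apply Lemma \ref{swapping}. Recall the standard fact (see \cite[Lemma 1.4.10 or similar]{james_1984}) that if $\lambda \trianglerighteq \mu$ then $\mu$ can be obtained from $\lambda$ by a finite sequence of steps, each of which moves a single box from the end of some row $i$ to the end of some row $k>i$, with the intermediate shapes remaining partitions. By induction it therefore suffices to prove both inequalities in the case where $\mu$ is obtained from $\lambda$ by one such box move: we may assume $\mu_k = \lambda_k + 1$, $\mu_i = \lambda_i - 1$ for some $i<k$, and $\mu_j = \lambda_j$ otherwise.

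First I would handle the $\best$ inequality. The point is that $T_\lambda^\rightarrow$ and $T_\mu^\rightarrow$ are both obtained by filling $1,\dots,n$ left to right along successive rows, so I can compare them directly. The box that changed position is the one that was the last box of row $i$ in $\lambda$ and is the last box of row $k$ in $\mu$; all boxes that come ``before'' it in reading order are filled identically in the two tableaux, and all boxes ``after'' it are shifted but in a way that I claim can be realized by a sequence of value-swaps each moving a larger entry down-and-to-the-left. Concretely: label the changed box's entry (in $T_\mu^\rightarrow$) and track how the reading-order filling of $\mu$ differs from that of $\lambda$; the net effect on the multiset of (value, position) pairs is that a block of entries gets pushed from higher rows to lower rows, never up-and-to-the-right. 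Each such elementary relocation has $(i_1 - i_2) + (j_2 - j_1) \geq 0$ in the notation of Lemma \ref{swapping} (the row index strictly decreases going to the target, i.e. the larger entry moves down), so each swap is eigenvalue-non-decreasing, giving $\eig(T_\mu^\rightarrow) \leq \eig(T_\lambda^\rightarrow)$.

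For the $\worst$ inequality \eqref{down}, I would invoke transposition: $\lambda \trianglerighteq \mu$ is equivalent to $\mu' \trianglerighteq \lambda'$, and by construction $(T_\lambda^\downarrow)' = T_{\lambda'}^\rightarrow$ (filling top-to-bottom down columns is the transpose of filling left-to-right along rows). Applying the already-established $\best$ inequality to the pair $\mu' \trianglerighteq \lambda'$ gives $\eig(T_{\lambda'}^\rightarrow) \leq \eig(T_{\mu'}^\rightarrow)$, and then Lemma \ref{lem:sumtranspose} (which says $\eig(T) + \eig(T') = 2H_n/n$, hence $\eig(T^\downarrow_\nu) = 2H_n/n - \eig(T^\rightarrow_{\nu'})$ for any $\nu$) converts this into $\eig(T_\lambda^\downarrow) \geq \eig(T_\mu^\downarrow)$, as required.

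The main obstacle I anticipate is the bookkeeping in the middle step: making precise the claim that passing from the reading-order filling of $\lambda$ to that of $\mu$ decomposes into swaps that each move the larger entry weakly down-and-to-the-left. One clean way to organize this is to not compare $T_\lambda^\rightarrow$ and $T_\mu^\rightarrow$ directly, but instead to use Lemma \ref{seq}: it suffices to exhibit \emph{some} tableau $S \in \SYT(\mu)$ with $\eig(S) \le \eig(T_\lambda^\rightarrow)$, since $\eig(T_\mu^\rightarrow) \geq \eig(S)$ would then be the wrong direction --- so that shortcut does not immediately work and the direct comparison really is needed. Thus the careful argument is to start from $T_\lambda^\rightarrow$, physically move the relevant box down to row $k$ (this is a legal reshaping since the intermediate shape is a partition), and observe that the resulting filling of $\mu$ differs from $T_\mu^\rightarrow$ only by a reordering of values \emph{within} $\mu$; one then sorts this filling into $T_\mu^\rightarrow$ by swaps, and the geometry of which boxes increased versus decreased in row index forces every sorting swap to move its larger entry down-and-left, so Lemma \ref{swapping} applies throughout. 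Verifying this sign condition for every swap in the sorting sequence is the technical heart of the proof.
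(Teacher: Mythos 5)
Your reduction to a single box move, and your derivation of \eqref{down} from the first inequality via transposition together with Lemma~\ref{lem:sumtranspose}, both match the paper exactly. The gap is in the first inequality. You propose to pass from $\best$ to $T_\mu^{\rightarrow}$ by (a) physically relocating the box --- which does lower the eigenvalue, since the content $j-i+1$ of that box strictly decreases while its entry is unchanged --- and then (b) sorting the resulting filling $S$ of $\mu$ into $T_\mu^{\rightarrow}$ by swaps which are all eigenvalue-monotone in one direction via Lemma~\ref{swapping}. Two things go wrong. First, the sign you assert (every sorting swap moves its larger entry down-and-left, hence is eigenvalue-non-decreasing) would give $\eig(T_\mu^{\rightarrow})\ge \eig(S)$, which together with $\eig(S)\le\eig(\best)$ yields nothing; you would need the sort to be non-\emph{increasing} for the chain of inequalities to close. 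Second, and more fundamentally, the sorting swaps do not have a consistent sign at all. For $\lambda=(3,1)$, $\mu=(2,2)$ the single sorting swap (of the entries $3$ and $4$ in the second row) moves the larger entry to the right and strictly \emph{decreases} the eigenvalue, whereas for $\lambda=(4,1)$, $\mu=(3,1,1)$ the single sorting swap (of $4$ and $5$ in the first column) moves the larger entry down and strictly \emph{increases} it: there $\eig(S)=2.75/5=0.55$ while $\eig(T_\mu^{\rightarrow})=2.8/5=0.56$. So no qualitative sign argument on the sort can succeed: the box move and the relabelling of the intermediate rows contribute with opposite signs in general and must be compared quantitatively.

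That quantitative comparison is what the paper's proof supplies. It observes that $T_\mu^{\rightarrow}(i,j)=\best(i,j)-1$ on the boxes common to both shapes in rows $a< i\le b$ (and the tableaux agree elsewhere), writes $n(\eig(\best)-\eig(T_\mu^{\rightarrow}))$ via Lemma~\ref{compute} as the moved-box term plus a sum over those rows, bounds each summand using $j-i+1\le\lambda_a-a+1$, and lets the resulting sum telescope down to $\bigl((\lambda_a-\lambda_b)+(b-a)-1\bigr)/T_\mu^{\rightarrow}(b,\lambda_b+1)\ge 0$. If you want to retain a swap-flavoured argument you would need analogous quantitative bookkeeping to show the gain from the box move dominates any loss from the sort; the purely sign-based version you describe does not close.
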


\begin{proof}
	If we can show the statements hold for any partition $\mu$ which is formed from $\lambda$ by 
	moving only one box then inductively it will hold for all $\lambda \trianglerighteq \mu$.
	Suppose $\mu$ is formed from $\lambda$ by moving a box from row $a$ to row $b$, with $a<b\leq l(\lambda)+1$
	(if $b=l(\lambda)+1$ then a new row is created by placing the removed box on the very bottom of the diagram). 
	The box we move goes from coordinates $(a,\lambda_{a})$ of $\lambda$ to $(b,\lambda_{b}+1)$ in $\mu$. 
	
	We shall prove first that $	\eig(\best) \geq  
	\eig(T_\mu^{\rightarrow})$. 
	Since $\best$ and  $T_\mu^\rightarrow$ are both numbered from left to right, the effect of moving a box from row $a$ to row $b$ is that $T_\mu^{\rightarrow}(i,j) = \best(i,j) -1$ for any box $(i,j)$ with $a<i\le b$; boxes in all other rows contain the same values in both tableaux. Using equation~\eqref{eigenvaluesum}, and remembering to include a term to account for the box being moved, we find that:
	\begin{align*}
		n(\eig(\best)  -
		\eig(T_\mu^{\rightarrow})) 
		& =  
		\left(\frac{\lambda_{a}-a+1}{\best(a,\lambda_{a})} 
		-\frac{(\lambda_{b}+1)-b+1}{T_\mu^{\rightarrow}(b,\lambda_{b}+1)}\right)  
		+ \sum_{\substack{(i,j) \in T_\lambda\cap T_\mu \\ \textnormal{with } a < i 
		\leq b}}\left[\frac{1}{\best(i,j)} - 
		\frac{1}{T_\mu^{\rightarrow}(i,j)}\right](j-i+1)\\
		& \geq  \left( \frac{\lambda_{a}-a+1}{\best(a,\lambda_{a})} 
		-\frac{(\lambda_{b}+1)-b+1}{T_\mu^\rightarrow(b,\lambda_b+1)}\right)  + (\lambda_a-a+1)
		\left(\frac{1}{T_\mu^{\rightarrow}(b,\lambda_{b}+1)} - 
		\frac{1}{\best(a,\lambda_{a})}\right)\\
		& =  \frac{(\lambda_{a} -\lambda_{b}) + 
			(b-a) -1}{T_\mu^{\rightarrow}(b,\lambda_{b}+1)} 	\geq 0.
	\end{align*} 
	The first inequality holds because all the square-bracketed terms in the sum are negative; 
	we upper bound $j-i+1 \le \lambda_{a}-a+1$, and the resulting sum telescopes. The final inequality holds because $(\lambda_{a}-\lambda_{b})\ge 1$ and $(b-a)\ge 1$.

	For the second inequality, recall that $\lambda \trianglerighteq \mu$ if and only if $\mu^{\prime} \trianglerighteq \lambda^{\prime}$. 
	Therefore, using the first established inequality we find that $\eig(T_{\mu^{\prime}}^{\rightarrow}) \geq \eig(T_{\lambda^{\prime}}^{\rightarrow})$. 
	Now Lemma~\ref{lem:sumtranspose} gives $-\eig(T_{\mu}^{\downarrow}) \geq -\eig(T_{\lambda}^{\downarrow})$ and thus we recover the desired inequality.
\end{proof}

\subsection{Upper Bound Analysis}
\label{UPPERBOUNDANALYSIS}
In this section we complete the proof of the upper bound of Theorem \ref{LR main theorem}, making use of the results of Section~\ref{EIGENVALUES}. 
The analysis splits into two parts, dealing separately with those partitions $\lambda$ having either ``large'' or ``small'' first row.

Lemma~\ref{ClassicL2} allows us to upper bound the total variation distance in terms of the non-trivial eigenvalues of the transition matrix. Using Lemma~\ref{compute} we see that the trivial eigenvalue corresponds to the one-dimensional partition $\lambda=(n)$, and so Theorem~\ref{Maintheorem} implies that 
\[ 4\lVert \LR^{t}_{n} -\pi_{n} \rVert_{\textnormal{TV}}^{2}\,\le\, \sum_{\substack{\lambda\vdash n\\\lambda \neq (n)}} 
d_{\lambda} \sum_{T \in 
	SYT(\lambda)} 
\eig(T)^{2t} \,.
 \]
Recall from Lemma~\ref{seq} that for any $T \in \SYT(\lambda)$ the eigenvalue corresponding to $T$ may be bounded by those corresponding to $\worst$ and $\best$. With this in mind, we let $\Lambda_n^\rightarrow = \{\lambda\vdash n\,:\, |\eig(T_\lambda^\downarrow)|\,\le\, \eig(T_\lambda^\rightarrow)\}$ and $\Lambda_n^\downarrow = \{\lambda\vdash n\,:\, |\eig(T_\lambda^\downarrow)|\,>\, |\eig(T_\lambda^\rightarrow)|\}$; note that these are disjoint sets, with $\Lambda_n^\rightarrow \subseteq \{\lambda\vdash n\,:\, \eig(\best) \,\ge\,0 \}$ and $\Lambda_n^\downarrow \subseteq \{\lambda\vdash n\,:\, \eig(\worst) \,<\,0 \}$. Using Lemma~\ref{seq} and then Corollary~\ref{flip} we relax the upper bound as follows:
\begin{eqnarray}
	4\lVert \LR^{t}_{n} -\pi_{n} \rVert_{\textnormal{TV}}^{2} & \leq & \eig(T_{(1^n)})^{2t} \,+ \,
	\sum_{\substack{\lambda\in \Lambda_n^\rightarrow\\\lambda \neq(n)}} 
	d_{\lambda} \sum_{T \in SYT(\lambda)} \eig(T)^{2t}
	+ \sum_{\substack{\lambda\in \Lambda_n^\downarrow\\\lambda \neq(1^n)}} 
	d_{\lambda} \sum_{T \in SYT(\lambda)} \eig(T)^{2t}\nonumber \\
	& \leq &   \eig(T_{(1^n)})^{2t} \,+ \,
		\sum_{\substack{\lambda\in \Lambda_n^\rightarrow\\\lambda \neq(n)}} 
	d_{\lambda}^2 \,\eig(\best)^{2t}
	+ \sum_{\substack{\lambda\in \Lambda_n^\downarrow\\\lambda \neq(1^n)}} 
	d_{\lambda}^2 \,\eig(\worst)^{2t} \nonumber\\	
	& \leq & \eig(T_{(1^n)})^{2t} \,+ \,
	 \sum_{\substack{\lambda\,:\,\eig(\SEQ) \geq 0\\ \lambda \neq (n)}} 
	d_{\lambda}^{2}  \,	\eig(\best)^{2t} 
	+ \sum_{\substack{\lambda\,:\,\eig(\OSEQ) < 0 \\ \lambda \neq(1^n)}} 
	d_{\lambda}^{2} \,\eig(\worst)^{2t} \nonumber\\
			& \leq &   \eig(T_{(1^n)})^{2t} \,+ \,
	 \sum_{\substack{\lambda\,:\,\eig(\SEQ) \geq 0\\ \lambda \neq (n)}} 
	d_{\lambda}^2 \,\eig(\best)^{2t}
	+ \sum_{\substack{\lambda\,:\,\eig(\OSEQ) < 0 \\ \lambda'\neq(n)}} 
	d_{\lambda'}^2 \,\eig(T^\rightarrow_{\lambda'})^{2t} \nonumber \\
	& \leq & \eig(T_{(1^n)})^{2t} \,+ \,
	 2\,\sum_{\substack{\lambda\,:\,\eig(\SEQ) \geq 0\\ \lambda \neq (n)}} 
	 	d_{\lambda}^{2}  \,	\eig(\best)^{2t} \,. \label{eqn:UB_eigs}
\end{eqnarray}
(In the penultimate line we have used Corollary~\ref{flip} and the fact that $d_{\lambda'}=d_\lambda$. The final inequality follows by a second application of Corollary~\ref{flip}: if $\lambda$ satisfies $\eig(\OSEQ)<0$ then $\eig(T^\rightarrow_{\lambda'})$ must be non-negative.)

The first term in \eqref{eqn:UB_eigs} is simple to deal with at time $t=n\log(n)+cn$. We have already observed that $\eig\left(T_{(n)}\right) = 1$, and so Lemma~\ref{lem:sumtranspose} implies that $\eig\left(T_{(1^{n})}\right) = 2 H_{n}/n-1$. This means that
\begin{equation}\label{eqn:first_partition}
\eig\left(T_{(1^n)}\right)^{2t} = 
\left(1 - 
\frac{2H_{n}}{n} \right)^{2(n\log n + cn)} 
\end{equation}
and, using the bound $1-x\le e^{-x}$, we see that this tends to zero for any fixed $c$ as $n\to\infty$.

It therefore remains to bound the sum in \eqref{eqn:UB_eigs}. The partitions with 
the biggest eigenvalues will be those with large first rows $\lambda_{1}$, and so we split the analysis into two parts according to this value; by \emph{large} partitions we mean those with 
$\lambda_{1} \geq 3n/4$, and \emph{small} partitions are those with $\lambda_{1} <3n/4$. Large partitions give the biggest eigenvalues for $\LR_{n}$ and must be dealt with carefully; it is these which will determine the mixing time of the shuffle. Small partitions have correspondingly large dimensions, but eigenvalues which are small enough to give control around any time of order $n\log(n)$. 
We begin by identifying the partition at the top of the dominance ordering for any fixed value of $\lambda_1$, which allows us to employ Lemma~\ref{order}. If $\lambda\vdash n$ has first row equal to $\lambda_{1}=n-k$, then by moving boxes up and to the right it follows trivially that  
	\[ \lambda \trianglelefteq \begin{cases}
	(n-k,k) & \text{ if } k \leq \frac{n}{2}\\
	(n-k,n-k,\dots) = (n-k,\star) & \text{ if } \frac{n}{2} <k\leq n-1  \,,
	\end{cases}
	\]
where we write $(n-k,\star)$ for the partition which has as many rows of $n-k$ 
boxes as possible, with the last row being formed from whatever is left over; 
it will transpire that in this case only the size of the first two rows will be 
important for our bounds.

For each $k$ we also need a bound on sum of the squared dimensions of all partitions with 
$\lambda_{1}=n-k$, and for this we use: 

\begin{lemma}[Corollary 2 of \cite{diaconis1981generating}]\label{lem:Diaconis}
	\[\sum_{\substack{\lambda \vdash n \\ \lambda_{1} =n-k} }
	d_{\lambda}^{2} \leq {n \choose k}^{2} k!\leq   \frac{n^{2k}}{k!} \,.\]
\end{lemma}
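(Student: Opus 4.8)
The plan is to peel off the first row of each partition, reducing the sum over $\lambda\vdash n$ with $\lambda_1=n-k$ to a sum over partitions of $k$, and then to invoke the classical identity $\sum_{\mu\vdash k}d_\mu^2=k!$.

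The one ingredient with genuine content is the inequality
\[ d_\lambda \;\le\; \binom{n}{k}\, d_\mu \qquad\text{whenever } \lambda\vdash n,\ \lambda_1=n-k,\ \mu:=(\lambda_2,\lambda_3,\dots)\vdash k. \]
To prove it I would exhibit an injection from $\SYT(\lambda)$ into a set of size $\binom{n}{k}d_\mu$. Given $T\in\SYT(\lambda)$, record the $k$-element set $S\subseteq[n]$ of entries of $T$ lying in rows $2,3,\dots$, together with the tableau $\widetilde T\in\SYT(\mu)$ obtained by deleting the first row and relabelling the elements of $S$ as $1,\dots,k$ in increasing order. Since the first row of $T$ must then consist of the elements of $[n]\setminus S$ written in increasing order, the pair $(S,\widetilde T)$ recovers $T$ completely; hence $T\mapsto(S,\widetilde T)$ is injective. (The map is not surjective, because an arbitrary pair need not respect the column-strict condition linking rows $1$ and $2$, but an upper bound is all we need.)

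Next I would sum. Deleting the first row sets up a bijection between $\{\lambda\vdash n:\lambda_1=n-k\}$ and $\{\mu\vdash k:\mu_1\le n-k\}$, so squaring the displayed inequality and summing gives
\[ \sum_{\substack{\lambda\vdash n\\ \lambda_1=n-k}} d_\lambda^2 \;\le\; \binom{n}{k}^{2}\sum_{\substack{\mu\vdash k\\ \mu_1\le n-k}} d_\mu^2 \;\le\; \binom{n}{k}^{2}\sum_{\mu\vdash k} d_\mu^2 \;=\; \binom{n}{k}^{2}\,k!, \]
where the last equality is the standard fact $\sum_{\mu\vdash k}d_\mu^2=|S_k|=k!$ coming from the decomposition of the regular representation of $S_k$. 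This establishes the first inequality.

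The second inequality is then a routine estimate: $\binom{n}{k}^{2}k!=\dfrac{(n!)^2}{(n-k)!^{2}\,k!}$, and since $\dfrac{n!}{(n-k)!}=n(n-1)\cdots(n-k+1)\le n^{k}$, this is at most $\dfrac{n^{2k}}{k!}$. I expect no obstacle beyond setting up the injection $T\mapsto(S,\widetilde T)$ cleanly; the remaining steps are bookkeeping and a classical identity.
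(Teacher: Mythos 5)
Your proof is correct. The paper does not prove this lemma itself — it simply cites Corollary~2 of Diaconis and Shahshahani — and your argument is essentially the classical one behind that citation: the first-row-deletion injection giving $d_\lambda \le \binom{n}{k} d_\mu$ for $\mu=(\lambda_2,\lambda_3,\dots)\vdash k$, combined with $\sum_{\mu\vdash k} d_\mu^2 = k!$ and the crude bound $n!/(n-k)!\le n^k$, so there is nothing to add.
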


\subsubsection*{Large Partitions}

Let $\lambda$ be a partition satisfying $\eig(\best)\ge 0$, and for which $\lambda_1 = n-k$ for some  $k\leq n/4$. We have observed above that $\lambda \trianglelefteq(n-k,k)$, and so Lemma~\ref{order} suggests that we look at the eigenvalue of $T_{(n-k,k)}^{\rightarrow}$. Using our eigenvalue formula from Lemma~\ref{compute} we calculate this as follows, with the first/second sum corresponding to the first/second row of $T_{(n-k,k)}^{\rightarrow}$:
\begin{align}
\eig(T_{(n-k,k)}^{\rightarrow}) & = \frac 1n \sum_{j=1}^{n-k} \frac{j}{T_{(n-k,k)}^{\rightarrow}(1,j)} 
	\,+\, \frac 1n\sum_{j=1}^{k} 
	\frac{j-1}{T_{(n-k,k)}^{\rightarrow}(2,j)} 
	\,=\, \frac{n-k}{n} \,+\, \frac 1n \sum_{j=1}^{k} 
	\frac{j-1}{n-k+j} \label{eqn:large} \\
	&=  1 -\frac{(n-k+1)}{n}(H_{n}-H_{n-k+1}) - \frac 1n \,. \nonumber
\end{align}

We now use this, along with the inequality $1-x \leq e^{-x}$, to bound the contribution of large partitions to the sum in \eqref{eqn:UB_eigs}:

\begin{align}
	\sum_{k=1}^{n/4}
	\sum_{\substack{\lambda\,:\,\eig(\best)\ge 0\\ \lambda_{1}=n-k}} 
	d_{\lambda}^{2} \text{eig}(T_{\lambda}^{\rightarrow})^{2t}
	& \, \leq\,   	\sum_{k=1}^{n/4} \frac{n^{2k}}{k!}\,
	\text{eig}\left(T_{(n-k,k)}^{\rightarrow}\right)^{2t}  \qquad\text{(by Lemma~\ref{lem:Diaconis})} \nonumber \\
		& \,\leq\, 	\sum_{k=1}^{n/4} \frac{n^{2k}}{k!}\, \left( 1 
	-\frac{(n-k+1)}{n}(H_{n}-H_{n-k+1}) - \frac{1}{n}\right)^{2t} \nonumber \\
	& \,\leq\, 	\sum_{k=1}^{n/4}  \frac{n^{2k}}{k!}\, e^{-2t\left(\frac{(n-k+1)}{n}(H_{n}-H_{n-k+1})+ 
		\frac{1}{n}\right)} \nonumber \\
	& \,\leq\,  e^{-2c}	\sum_{k=1}^{n/4}
	\frac{n^{2k-2(n-k+1)(H_{n}-H_{n-k+1})-2}}{k!}\,, \label{sum_large}
\end{align}
where in the last step we have substituted $t=n\log n + cn$.

The ratio of $(k+1)^{th}$ to $k^{th}$ terms in this sum is given by 
\begin{eqnarray}
\label{ratio1}
\frac{n^{2(H_n-H_{n-k})}}{k+1}\,.
\end{eqnarray}
For large $n$ this ratio is less than one for all $k=1,\dots,n/4$. Indeed, a little analysis shows that for large $n$ the largest value of the ratio over this range of $k$ is achieved when $k=1$, at which point it equals $n^{2/n}/2$. For sufficiently large $n$ this ratio is thus bounded above by $3/4$, say, which permits us to bound the sum in \eqref{sum_large} by a geometric series with initial term 1:
\begin{equation}
\label{eqn:sum_large_1}
e^{-2c}\,	\sum_{k=1}^{n/4}
\frac{n^{2k-2(n-k+1)(H_{n}-H_{n-k+1})-2}}{k!} \, \le \, e^{-2c}	
\sum_{k=1}^{n/4} (3/4)^{k-1} \,\le\, 4e^{-2c}\,.
\end{equation}

\subsubsection*{Small partitions}

Now consider a partition $\lambda$ satisfying $\eig(\best)\ge 0$ and for which 
$\lambda_1 = n-k$ with  $n/4<k\le n-2$. Suppose first of all that $n/4<k\le n/2$; as in the large partition case, any such partition is dominated by $(n-k,k)$, and the same calculation as in equation \eqref{eqn:large} shows that 
\begin{equation}\label{eqn:medium}
\eig\left(T_{(n-k,k)}^{\rightarrow}\right)  
\,=\, \frac{n-k}{n} \,+\, \frac 1n \sum_{j=1}^{k} 
\frac{j-1}{n-k+j} \,. 
\end{equation}

Now consider the case when $k>n/2$. Here we have already identified that 
$\lambda\trianglelefteq(n-k,\star)$, and so we proceed by calculating the 
eigenvalue of $T^\rightarrow_{(n-k,\star)}$. Note first that for any box 
$(i,j)$ with $i\ge 3$, 
\begin{align*}
\frac{j-i+1}{T_{(n-k,\star)}^{\rightarrow}(i,j)} &= \frac{j-i+1}{(i-1)(n-k) + 
j} \le \frac{(n-k)}{(i-1)(n-k) + (n-k)} \le \frac 13\,.
\end{align*}
Using this inequality in conjunction with Lemma~\ref{compute} we bound 
$\eig(T_{(n-k,\star)}^{\rightarrow})$ as follows:
\begin{align}
\eig(T_{(n-k,\star)}^{\rightarrow}) & = \frac 1n \sum_{j=1}^{n-k} 
\frac{j}{T_{(n-k,\star)}^{\rightarrow}(1,j)} 
\,+\, \frac 1n\sum_{j=1}^{n-k} 
\frac{j-1}{T_{(n-k,\star)}^{\rightarrow}(2,j)} \, + \, \frac 1n 
\sum_{\substack{(i,j) \nonumber \\ 
		i\geq 3}} \frac{j-i+1}{T_{(n-k,\star)}^{\rightarrow}(i,j)} \\
& \leq \frac{n-k}{n} \,+\, \frac 1n \sum_{j=1}^{n-k} 
\frac{j-1}{n-k+j} \, + \,  \frac{n-2(n-k)}{3n}\,. \label{eqn:small1}
\end{align}
We now observe that \eqref{eqn:small1} provides an upper bound for the expression in \eqref{eqn:medium}. Indeed, for $n/4<k\le n/2$ we may write
\begin{align*}
\frac{n-k}{n} \,+\, \frac 1n \sum_{j=1}^{n-k} 
\frac{j-1}{n-k+j} \, + \,  \frac{n-2(n-k)}{3n} - \eig(T_{(n-k,k)}^{\rightarrow}) &\,=\, \frac 1n \sum_{j=k+1}^{n-k} \left(\frac{j-1}{n-k+j} -\frac 13 \right) \\
&\,= \,  \frac{2(n-2k)}{3n} - \frac{(n-k+1)}{n}(H_{2(n-k)}-H_n) \,.
\end{align*}

Substituting $k=\gamma n$, this final expression is bounded below for any $n\ge 15$ by the function $f(\gamma)$, where $f:[1/4,1/2]\to \mathbb{R}$ is defined by
\[ f(\gamma) = \frac{2(1-2\gamma)}{3} -\left(1-\gamma + \frac{1}{15}\right)\log(2(1-\gamma)) \,. \]
This function is non-negative for all $\gamma\in[1/4,1/2]$, thus completing our claim.

We have just shown that for any $\lambda \vdash n$ satisfying $\eig(\best)\ge 0$ and for which $\lambda_{1}=n-k$ with $n/4 < k\leq n-2$,
\begin{align*}
\eig\left(T_{\lambda}^{\rightarrow}\right) \,&\le\, \frac{n-k}{n} \,+\, \frac 1n \sum_{j=1}^{n-k} 
\frac{j-1}{n-k+j} \, + \,  \frac{n-2(n-k)}{3n} \\
&= \frac{n-k}{n} + \frac{n-k-1 - (n-k+1)(H_{2(n-k)} -H_{n-k +1})}{n} +  
\frac{2k-n}{3n} \\ 
& = 1 -\frac{(4k-2n+3)}{3n} - \frac{(n-k+1)}{n}(H_{2(n-k)} -H_{n-k +1})\,.
\end{align*}
Using the inequalities $1-x \leq e^{-x}$ for all $x$, and $(x+1)\left(H_{2x} -H_{x+1}\right) > (x-1)\log 2$ for all integers $x\geq 2$, 
we are able to bound the 
contribution from small partitions to the sum in \eqref{eqn:UB_eigs} at time $t=n\log n +cn$ as follows:
\begin{align}
\sum_{k=n/4}^{n-2}
\sum_{\substack{\lambda\,:\,\eig(\best)\ge 0\\ \lambda_{1}=n-k}} 
d_{\lambda}^{2} \eig\left(T_{\lambda}^{\rightarrow}\right)^{2t}
& \, \leq\,   	\sum_{k=n/4}^{n-2} \frac{n^{2k}}{k!}\,
e^{-\frac{2t}{n}\left(\frac{4k-2n+3}{3} + (n-k+1)(H_{2(n-k)} -H_{n-k +1})\right)}\nonumber \\
	& \leq   
	e^{-2c}\sum_{k =n/4}^{n-2} 
	\frac{n^{\frac{4n-2k-6}{3}-2(n-k-1)\log 2}}{k!}\,. 
	\label{sum_small}
\end{align}
Once again writing $k=\gamma n$, now for $\gamma\in[1/4,1]$, a straightforward application of Stirling's formula shows that for large $n$ the dominant term in the summand of \eqref{sum_small} takes the form $n^{g(\gamma)n/3}$,
where $g(\gamma) = 4-5\gamma - 6(1-\gamma)\log 2 < 0$ for all $\gamma\in[1/4,1]$. It follows that, for any fixed $c$, the sum in \eqref{sum_small} tends to zero as $n\to\infty$. Combining this result with the bounds in \eqref{eqn:UB_eigs}, \eqref{eqn:first_partition} and \eqref{eqn:sum_large_1} completes the proof of the upper bound on the mixing time in Theorem~\ref{LR main theorem}.

\section{Lower Bound}
\label{LOWERBOUND}

To complete Theorem \ref{LR main theorem} we need to prove the lower bound on 
the mixing time. To do this we employ the usual trick of finding a set of permutations $B_n\subset S_n$  which has significantly different probability under the equilibrium distribution $\pi_n$ and the one-sided transposition measure $\RL_n^t$. The definition of total variation distance then immediately yields a simple lower bound:
\[ \lVert \RL_n^t -\pi_n \rVert_{\textnormal{TV}}\, \ge\, \RL_n^t(B_n) - \pi_n(B_n) \,. \]
In particular, we follow in the steps of~\cite{diaconis1981generating} and find a suitable set $B_n$ by considering the number of fixed points within (a certain part of) the deck. Estimation of $\RL_n^t(B_n)$ then reduces to a novel variant of the classical coupon collector's problem.

Recall that the deck of $n$ cards are labelled $\{1,\ldots,n\}$ from bottom to top.  One step of the one-sided transposition shuffle on $n$ cards may be modelled by 
firstly choosing a position $\righthand \sim U\{1,\dots n\}$ with our right 
hand, and then choosing a position (below our right hand) $\lefthand\sim U\{1,\dots ,\righthand\}$ with our left hand and transposing the cards in the chosen positions.

Since the left hand always chooses a position below that of the right hand, it is intuitively clear that our shuffle is relatively unlikely to transpose two cards near to the top of the deck. This leads us to focus attention on a set of positions at the top of the deck:  write $V_n$ for the top part of the deck, where 
\[ V_n = \{n-n/m+1,\dots,n-1,n\} \,, \]
and where $m=m(n)$ is to be chosen later. We shall be keeping track of fixed points within this part of the deck. Let 
\[ B_n = \{\rho\in S_n \, | \, \text{$\rho$ has at least 1 fixed point in $V_n$}\} \,.\]
Note that $V_n$ contains $n/m$ positions, and so we may upper bound the size of $B_{n}$ by choosing one position in $V_{n}$ to fix and considering all permutations of the other $n-1$ positions. This shows that $|B_{n}| \leq (n/m)(n-1)!$, and hence $\pi_{n}(B_n) \leq 1/m$.

To bound the value of $\RL_n^t(B_n)$ we  
reduce the problem to studying a simpler Markov chain linked to coupon 
collecting. When either of our hands $(\righthand, \lefthand)$ picks a new 
(previously 
untouched) card we shall say that this card gets \emph{collected}. The  
uncollected cards in $V_n$ at time $t$ are those which have not yet been picked 
by either hand, and thus the size of this set gives us a lower bound on the 
number of fixed points in $V_n$. Writing $U_n^t$ for the set of uncollected 
cards in $V_n$ after $t$ one-sided transposition shuffles, it follows that 
\begin{eqnarray}
\label{Ineq1}
\RL_{n}^{t}(B_n) \,\geq\, \mathbb{P}(|U^{t}_{n}| \ge 1 )\,.
\end{eqnarray}

We wish to show that at time $t=n\log n - n\log\log n$ the probability on the 
right hand side of \eqref{Ineq1} is large. The difficulty with the analysis 
here is that in the one-sided transposition shuffle the value of $L^i$ is 
clearly not independent of $R^i$. This means that a standard 
coupon-collecting argument for the time taken to collect all of the 
cards/positions in $V_n$ cannot be applied in our setting, and a little more 
work is therefore required.

Note that at each step there are four possibilities: both hands collect new cards, only one hand does (left or right) or neither does. This permits us to bound the change in the number of \emph{collected} cards as follows:
\begin{align}
|V_n\setminus U_n^{t+1}| & \,=\, |V_n\setminus U_n^{t}| + 
|\{L^{t+1},R^{t+1}\}\cap U_n^t| \nonumber \\
& \,\le\, |V_n\setminus U_n^{t}| + 2 \cdot \ind [L^{t+1}\in U_n^t] + 
\ind [L^{t+1}\notin U_n^t,R^{t+1}\in U_n^t]\,,\label{eqn:indicator_fns} 
\end{align}
where $\ind[\cdot]$ is an indicator function. Now, since the left hand is more likely to choose positions towards the bottom of the pack, 
\begin{align*}
\mathbb{P}(L^{t+1}\in U_n^t) &\,\le\, \mathbb{P}(L^{t+1}\in \hat U_n^t) \,,
\end{align*}
where $\hat U_n^t = \{n-n/m+1,\dots,n-n/m+ |U_n^t|\}$, i.e. the $|U_n^t|$ lowest numbered positions in $V_n$. Furthermore, 
\begin{align}
\mathbb{P}(L^{t+1}\in \hat U_n^t) &\,=\, \frac 1n\sum_{k\in \hat 
U_n^t}\mathbb{P}\left(L^{t+1}\in \hat U_n^t\,|\, R^{t+1}=k\right)+  \frac 
1n\sum_{k\in V_n\setminus \hat U_n^t}\mathbb{P}\left(L^{t+1}\in \hat U_n^t\,|\, 
R^{t+1}=k\right) \nonumber \\
&\,=\, \frac 1n\sum_{k\in \hat U_n^t} \frac{k-(n-n/m)}{k} +  \frac 1n\sum_{k\in 
V_n\setminus \hat U_n^t}\frac{|U_n^t|}{k} \nonumber \\
&\,\le\, \frac{\sum_{k=1}^{|U_n^t|} k + (n/m -|U_n^t|)|U_n^t|}{n(n-n/m)} 
\,\le\,\frac{|U_n^t|}{(m-1)n}\,. \label{eqn:bound_minus2}
\end{align}

The probability of the final event in \eqref{eqn:indicator_fns} is simple to bound:
\begin{equation}\label{eqn:bound_minus1}
\mathbb{P}(L^{t+1}\notin U_n^t,R^{t+1}\in U_n^t) \,\le\, \mathbb{P}(R^{t+1}\in U_n^t) \,\le\, \frac{|U_n^t|}{n}\,.
\end{equation}

Using \eqref{eqn:indicator_fns}, \eqref{eqn:bound_minus2} and \eqref{eqn:bound_minus1} together, we now define a counting process $M_n^t$ which stochastically dominates the number of collected cards $|V_n\setminus U_n^{t}|$ at all times: 
\begin{align}
M_n^0 \,&=\, 0 \,; \nonumber \\
\mathbb{P}({M}^{t+1}_{n} = {M}^{t}_{n}+k) \,&=\, 
\begin{cases} 
\frac{1}{(m-1)n}\left(\frac{n}{m}-{M}^{t}_{n}\right) & \text{ 
	if } k=2\\
\frac{1}{n}\left(\frac{n}{m}-{M}^{t}_{n}\right) & \text{ if } k =1\\
1- \frac{m}{(m-1)n}\left(\frac{n}{m}-{M}^{t}_{n}\right) & 
\text{ if } k =0\,. \label{eqn:M_change}
\end{cases}
\end{align}

Combining this with \eqref{Ineq1} we obtain the following bound on $\LR^{t}_{n}(B_n)$:
\begin{equation}
\label{Ineq3}
\RL^{t}_{n}(B_n)  \geq 
\mathbb{P}\left(M^{t}_{n} 
< n/m\right) \,.
\end{equation}

We are interested in the time at which the process ${M}^{t}_{n}$ first reaches level $n/m$, where we now take $m=m(n) =\log n$. 
\begin{lemma}\label{lem:lower_bound}
	Let $\mathcal{T} = \min\{t\,:\, M_n^t \ge n/\log n\}$. Then for any $c>2$,
	\[ \lim_{n\rightarrow \infty}\mathbb{P}(\mathcal{T}<n\log n - n \log \log n 
	-cn)\, \leq \, \frac{\pi^{2}}{6(c-2)^2} \,. \] 
\end{lemma}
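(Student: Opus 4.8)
The plan is to recast $\mathcal{T}$ as (essentially) a coupon-collector sum of independent geometric random variables and apply a one-sided Chebyshev inequality. Write $m=\log n$, $N=\lceil n/\log n\rceil$, and set $D^t=N-M_n^t$, so that $D^0=N$, $D$ is non-increasing, and $\mathcal{T}=\min\{t:D^t\le 0\}$; since $D$ is non-increasing, $\{\mathcal{T}<t^*\}\subseteq\{D^{\lceil t^*\rceil}\le 0\}$, where $t^*:=n\log n-n\log\log n-cn$. From \eqref{eqn:M_change}: whenever $D^t=d\in\{1,\dots,N\}$ the chain jumps down (by $1$ or $2$) with total probability $p_d:=\frac{md}{(m-1)n}$, and conditionally on a jump it is a ``double jump'' with probability exactly $1/m$, independently of how long $D$ has been sitting at level $d$.

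First I would construct $D$ on a space carrying two independent families: independent geometric holding times $G_d$ for $1\le d\le N$, each supported on $\{1,2,\dots\}$ with parameter $p_d$ (so $\mathbb{E}G_d=1/p_d$), and independent $\{0,1\}$-valued variables $\xi_d$ ($1\le d\le N$) with $\mathbb{P}(\xi_d=1)=1/m$ marking double jumps, the two families independent of one another. Building $D$ by spending $G_\ell$ steps at each level $\ell$ it occupies and then jumping down by $1+\xi_\ell$ gives a chain of the correct law, and for it $\mathcal{T}=\sum_{d\in\mathcal{B}}G_d$ where $\mathcal{B}\subseteq\{1,\dots,N\}$ is the random set of levels actually visited (the terminal level $\le 0$ contributes no time). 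Since $d\notin\mathcal{B}$ forces $D$ to have double-jumped from level $d+1\le N$, we get $\{1,\dots,N\}\setminus\mathcal{B}\subseteq\{d\le N-1:\xi_{d+1}=1\}$, hence the deterministic bound
\[\mathcal{T}\;=\;\Sigma-\!\!\sum_{d\in\{1,\dots,N\}\setminus\mathcal{B}}\!\!G_d\;\ge\;\Sigma-\mathrm{Sav},\qquad \Sigma:=\sum_{d=1}^{N}G_d,\quad \mathrm{Sav}:=\sum_{d=1}^{N-1}\ind[\xi_{d+1}=1]\,G_d.\]

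The rest is routine estimation. Using $H_N>\log N\ge\log n-\log\log n$ gives $\mathbb{E}\Sigma=\frac{(m-1)n}{m}H_N>n\log n-n\log\log n-n$, while $\mathrm{Var}\,\Sigma=\sum_d\frac{1-p_d}{p_d^2}<\sum_d p_d^{-2}<n^2\sum_{d\ge1}d^{-2}=\pi^2n^2/6$. For the correction term, $\mathbb{E}\,\mathrm{Sav}=\frac{(m-1)n}{m^2}H_{N-1}<n$ for large $n$, and by independence $\mathrm{Var}\,\mathrm{Sav}\le\sum_d\frac1m\mathbb{E}[G_d^2]<\frac{2n^2}{m}\sum_{d\ge1}d^{-2}=\frac{\pi^2n^2}{3\log n}=o(n^2)$, so $\mathrm{Sav}$ concentrates: $\mathbb{P}(\mathrm{Sav}\ge(1+\delta)n)\le\mathbb{P}(\mathrm{Sav}\ge\mathbb{E}\,\mathrm{Sav}+\delta n)\to 0$ for every fixed $\delta>0$. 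Now fix $\delta\in(0,c-2)$; then $\mathbb{P}(\mathcal{T}<t^*)\le\mathbb{P}(\Sigma<t^*+(1+\delta)n)+\mathbb{P}(\mathrm{Sav}\ge(1+\delta)n)$, the second term tends to $0$, and since $t^*+(1+\delta)n<\mathbb{E}\Sigma-(c-2-\delta)n$ for large $n$, the one-sided Chebyshev (Cantelli) inequality bounds the first by $\mathrm{Var}\,\Sigma/((c-2-\delta)n)^2<\pi^2/(6(c-2-\delta)^2)$. Letting $n\to\infty$ and then $\delta\downarrow 0$ yields $\limsup_{n}\mathbb{P}(\mathcal{T}<t^*)\le\pi^2/(6(c-2)^2)$.

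The crux — and the reason the bound is stated at $n\log n-n\log\log n-cn$ with $c>2$ rather than simply at $n\log n$ — is the treatment of the double jumps. Naively $\mathcal{T}\approx\Sigma\approx nH_N\approx n\log n-n\log\log n$, but double jumps let $D$ skip levels, and because the skipped levels are well-spread the total time saved ($\mathrm{Sav}$) concentrates around $n$; together with the crude lower bound $\mathbb{E}\Sigma>n\log n-n\log\log n-n$ this accounts for the full ``$-2n$''. Two technical points make this clean: (i) isolating $\mathrm{Sav}$ through the auxiliary variables $\xi_d$, so the random set of skipped levels is dominated by a set built from \emph{independent} coin-flips that are themselves independent of the $G_d$'s; and (ii) checking $\mathrm{Var}\,\mathrm{Sav}=o(n^2)$, so $\mathrm{Sav}$ shifts only the mean and not the variance, leaving $\mathrm{Var}\,\Sigma=\pi^2n^2/6+o(n^2)$ to drive the concentration estimate. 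A minor point to verify is that the holding-time/jump-direction recipe genuinely reproduces the law of $D$, which holds because each level is occupied during a single contiguous time-block on which the dynamics are ``geometric wait, then jump''.
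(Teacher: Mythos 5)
Your proof is correct, and it follows the same overall strategy as the paper's --- decompose $\mathcal{T}$ into per-level geometric holding times, account for the levels skipped by double jumps, and finish with a second-moment bound --- but the device you use to handle the double jumps is genuinely different. The paper works with the times $\mathcal{T}_i$ spent at each level, shows $\mathbb{P}(\mathcal{T}_i>0)\ge 1-1/m$, dominates each $\mathcal{T}_i$ from below by a zero-inflated geometric $\mathcal{T}'_i$, and applies Chebyshev to $\mathcal{T}'=\sum_i \mathcal{T}'_i$; the two factors of $(m-1)/m$ in $\E[\mathcal{T}']=\left(\tfrac{m-1}{m}\right)^2 n\log(n/m)$ account for the full $-2n$. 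You instead realise the chain explicitly from independent geometrics $G_d$ and independent double-jump coins $\xi_d$, write the exact identity $\mathcal{T}=\Sigma-\sum_{d\notin\mathcal{B}}G_d\ge\Sigma-\mathrm{Sav}$, and treat the two pieces separately, so the $-2n$ arrives as $-n$ from $\E[\Sigma]$ and $-n$ from the concentration of $\mathrm{Sav}$. Your route is slightly longer (it needs the auxiliary $\delta$ and the $\Var[\mathrm{Sav}]=o(n^2)$ estimate, both of which you carry out correctly), but it buys something: the paper's passage from the per-level dominations to $\mathbb{P}(\mathcal{T}<t)\le\mathbb{P}(\mathcal{T}'<t)$ with the $\mathcal{T}'_i$ \emph{independent} --- which is what licenses the variance computation $\Var[\mathcal{T}']\le\sum_i p_i^{-2}$ --- is asserted without comment, whereas in your construction every independence statement used in the moment bounds is manifest from the coupling. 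Both arguments yield the same mean and variance asymptotics and hence the same constant $\pi^2/(6(c-2)^2)$.
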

Before proving Lemma~\ref{lem:lower_bound} we first show how this result quickly leads to a proof of the lower bound in Theorem~\ref{LR main theorem}. Writing $t = n\log n - n \log \log n $ we obtain
\begin{align*}
\lVert \RL_n^{t-cn} -\pi_n \rVert_{\textnormal{TV}}\, &\ge\, \RL_n^{t-cn}(B_n) - \pi_n(B_n) 
\,\ge\, \mathbb{P}\left(M^{t-cn}_{n} < n/m(n)\right)- 1/m(n)\\
&= \, \mathbb{P}(\mathcal{T}>t-cn) - \frac{1}{\log n } 
\,\ge\,  1 -\frac{\pi^{2}}{6(c-2)^2} - \frac{1}{\log n } \,,
\end{align*}
as required.

\begin{proof}[Proof of Lemma~\ref{lem:lower_bound}]
	Let $\mathcal{T}_{i}$ be the time spent by the process 
	${M}^{t}_{n}$ in each state $i\ge0$. We want to find $\mathcal{T} = \mathcal{T}_{0} + \mathcal{T}_{1} + \dots + 
	\mathcal{T}_{(n/m)-1}$.
	
	From \eqref{eqn:M_change} we see that 
	\begin{equation}\label{eqn:M_up}
	p_i\,:=\,\mathbb{P}({M}^{t+1}_{n} > {M}^{t}_{n}\,|\, M_n^t = i) \,=\, 
	\frac{m}{(m-1)n}\left(\frac{n}{m}-i\right)\,.
	\end{equation}
	In the standard coupon collecting problem each of the random variables $\mathcal T_i$ has a geometric distribution with success probability $p_i$. Here, however, we have to take into account the chance that our counting process $M_n$ increments by two, leading it to spend zero time at some state. Note first that 
	\[\mathbb{P}({M}^{t+1}_{n} ={M}^{t}_{n} + 2 \,|\, {M}^{t+1}_{n} > {M}^{t}_{n}) \,=\, \frac{1}{m}\,,\]
	independently of the value of $M_n^t$. 
	Prior to spending any time in state $i$, the process ${M}^{t}_{n}$ must visit (at least) one of the states $i-1$ or $i-2$. A simple argument shows that 
	\[ \mathbb{P}(\mathcal{T}_{i}>0\, | \,\mathcal{T}_{i-1}>0) \,=\, 1- \frac{1}{m}\,,\quad\text{and}\quad
	\mathbb{P}(\mathcal{T}_{i}>0\, |\, \mathcal{T}_{i-2}>0) \,=\, 1- 
	\frac{1}{m}\left(1-\frac{1}{m}\right) \ge 1-\frac 1m\,. \]
	Therefore $\mathbb{P}(\mathcal T_i>0) \ge 1-\frac{1}{m}$ for all states $i$, and so $\mathcal{T}_i$ stochastically dominates the random variable $\mathcal T'_i$ with mass function
	\begin{equation}\label{eqn:Tprime}
	\mathbb P(\mathcal T'_i = k) = \begin{cases}
	1/m &\quad k=0 \\
	(1-1/m)p_i(1-p_i)^{k-1} &\quad k\ge 1\,.
	\end{cases}
	\end{equation}
	It follows that $\mathbb P(\mathcal T<t) \le \mathbb P(\mathcal T'<t)$ for any $t$, where $\mathcal T' = \mathcal{T}'_{0} + \mathcal{T}'_{1} + \dots + \mathcal{T}'_{(n/m)-1}$.
	Setting $m = m(n)=\log n$ we may bound the expectation and variance of $\mathcal T'$:
	\begin{eqnarray*}
		\E[\mathcal{T}^{\prime}] & = & 
		\sum_{i=0}^{n/m-1} 
		\frac{m-1}{mp_i} =  \left(\frac{m-1}{m}\right)^{2} n\log(n/m)\,\geq\, n\log n  
		-n\log \log n -2n \,;\\
		\Var[\mathcal{T}^{\prime}] & \le & \sum_{i=0}^{n/m-1}  \frac{1}{p_i^2} \,\le\, 
		\sum_{i=1}^{n/m}\frac{n^{2}}{i^{2}} 
		\,\leq\, 	\frac{\pi^{2}}{6}n^{2}\,.
	\end{eqnarray*}
	Finally, applying Chebyshev's inequality yields the following for any $c>2$:
	\[	\mathbb{P}\left(\mathcal{T}^{\prime} < n\log(n) - n \log \log n -cn\right) 
	\,\leq\, 
	\mathbb{P}\left(|\mathcal{T}^{\prime} -\E[\mathcal{T}^{\prime}]\,|\, >  (c-2)n \right)
	\,\leq\,  \frac{\pi^2}{6(c-2)^2}\,.
	\]
\end{proof}

\section{Biased One-sided Transpositions}
\label{GENERALISATIONS}

In this section we generalise the result of Theorem~\ref{LR main theorem} by allowing the right hand to choose from a more general distribution on $[n]$.

\begin{defn}\label{def:weight}
	Given a \emph{weight function} $w:\mathbb{N} \rightarrow (0,\infty)$, let $\NW = \sum_{i=1}^{n} w(i)$ denote the cumulative weight up to $n$. Then the \emph{biased one-sided transposition shuffle} $\BLR_{n,w}$ is the random walk on $S_n$ generated by the following distribution on transpositions:
	\begin{eqnarray}
	\BLR_{n,w}(\tau) = 
	\begin{cases}
	\frac{w(j)}{\NW}\cdot\frac{1}{j} &  \text{if } \tau = (i\,j) 
	\text{ for some } 
	1\leq i 
	\leq j 
	\leq n\\
	0 & \text{otherwise.}
	\end{cases}
	\end{eqnarray}
\end{defn}

This shuffle allows a general distribution for the position chosen by the right hand, $R^i$, with
\begin{eqnarray}
\mathbb{P}(\righthand=j) = \frac{w(j)}{\NW}, \text{ for } 1 \leq j 
\leq 
n\,,
\end{eqnarray}
but maintains the property that the left hand $L^i$ chooses a position uniformly on the set $\{1,\dots,R^i\}$. Importantly the weight of each position $j$ may only 
depend on $j$ and not the size of the deck $n$. This setup implies that the 
biased shuffle preserves the recursive algebraic structure identified in the Appendix, and that the 
results of Theorems \ref{master} and \ref{liftappendix} still hold (up to minor changes in constants).
It follows that the eigenvalues of the biased one-sided transposition shuffles are 
still represented by standard Young tableaux, and that the eigenvalue associated 
to a tableau $T$ may be computed in a similar way.

\begin{lemma}	\label{biasedCompute}
	The eigenvalues for the biased one-sided transposition shuffle $P_{n,w}$ on $n$ cards 
	are indexed by standard Young tableau of size $n$. For a standard Young tableau $T$ of size $n$ and $m\in[n]$ define a function $T(m)$ by setting $T(m) = j-i+1$ if and only if $T(i,j) =m$. The 
	eigenvalue corresponding to a tableau $T$ is given by
	\begin{eqnarray*}
	\label{eigenvaluesum2}
	\eig(T) = \frac{1}{\NW}\sum_{\substack{\textnormal{boxes}\\ 
			(i,j)}} 
	\frac{j-i+1}{T(i,j)} \cdot w(T(i,j)) = \frac{1}{\NW}\sum_{m=1}^{n} 
	T(m) \frac{w(m)}{m}.
	\end{eqnarray*}
\end{lemma}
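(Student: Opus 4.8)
The plan is to mirror the proof of Lemma~\ref{compute} (the unbiased case), carefully tracking where the weight function enters. Recall that the result for the unbiased shuffle was obtained from the recursive lifting structure in the Appendix (Theorems~\ref{master} and~\ref{liftappendix}): the eigenvalue attached to a standard Young tableau $T$ of size $n$ is built up by adding, at each stage $m$, a contribution coming from the box into which the entry $m$ is placed. First I would recall from the Appendix discussion that the biased shuffle $\BLR_{n,w}$ retains exactly the same recursive relationship between the shuffle on $n$ and on $n+1$ cards, the only difference being that the step distribution on $n$ cards is now $\BLR_{n,w}$ rather than the renormalisation of $\BLR_{n+1,w}$ restricted to the first $n$ positions -- and because the weight $w(j)$ of position $j$ depends only on $j$, this restriction really is $\BLR_{n,w}$ up to the cumulative-weight normalising constant. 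This is precisely the hypothesis needed for the lifting technique to run verbatim.

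Next I would identify the per-step contribution. In the unbiased case, adding entry $m$ into box $(i,j)$ contributes $\frac{1}{n}\cdot\frac{j-i+1}{m}$ to $\eig(T)$; this is exactly the eigenvalue formula~\eqref{eigenvaluesum} written as a sum over $m=1,\dots,n$ of the terms $\frac1n\cdot\frac{T(m)}{m}$ with $T(m) := j-i+1$ when $T(i,j)=m$. For the biased shuffle the right hand picks position $j$ with probability $w(j)/\NW$ instead of $1/n$, and the left hand is still uniform on $\{1,\dots,j\}$; feeding this modified step distribution through the lifting recursion replaces the factor $\frac1n$ attached to the contribution of the box containing entry $m$ by $\frac{w(m)}{\NW}$ (the index is $m$ because $m$ labels the position in the sub-deck of size $m$ at the stage when that box is added). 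Collecting the contributions over all $m$ yields
\[
\eig(T) = \frac{1}{\NW}\sum_{m=1}^{n} \frac{T(m)}{m}\,w(m) = \frac{1}{\NW}\sum_{\substack{\textnormal{boxes}\\(i,j)}} \frac{j-i+1}{T(i,j)}\,w(T(i,j))\,,
\]
which is the claimed formula; the equality of the two expressions is just the change of summation index between ``sum over entries $m$'' and ``sum over boxes $(i,j)$''. The indexing claim -- that the eigenvalues are still labelled by standard Young tableaux of size $n$, each appearing with multiplicity $d_\lambda$ for its shape $\lambda$ -- is inherited directly from Theorem~\ref{Maintheorem}, since the lifting construction produces the same combinatorial labelling regardless of the numerical values of the step probabilities.

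The main obstacle is not any calculation but rather making the appeal to the Appendix precise: I need to state clearly which constants in Theorems~\ref{master} and~\ref{liftappendix} change and verify that the recursion still closes. Concretely, the delicate point is that in the lifting argument one decomposes a step of the shuffle on $m$ cards according to whether $\righthand = m$ or $\righthand < m$; in the biased case the ``$\righthand < m$'' part must itself be (a scalar multiple of) $\BLR_{m-1,w}$, and this is exactly where the $n$-independence of the weights is used -- if $w$ were allowed to depend on the deck size this step would fail. Once that compatibility is checked, the eigenvalue computation is a routine bookkeeping exercise identical in structure to the proof of Lemma~\ref{compute}, with $\frac1n$ systematically replaced by $\frac{w(\cdot)}{\NW}$ at the appropriate index, so I would present it briefly and refer the reader to the Appendix for the details of the lifting machinery.
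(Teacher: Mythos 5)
Your proposal is correct and follows essentially the same route as the paper, which likewise proves this lemma by observing that the $n$-independence of $w$ preserves the recursion $Q_{n+1,w}-Q_{n,w}=\frac{w(n+1)}{n+1}\sum_{i}(i\;n{+}1)$, so that Theorems~\ref{master} and~\ref{liftappendix} hold with $\frac{1}{n+1}$ replaced by $\frac{w(n+1)}{n+1}$, and then repeats the bookkeeping of the proof of Lemma~\ref{compute} with the per-box contribution $\frac{j-i+1}{T(i,j)}$ replaced by $\frac{j-i+1}{T(i,j)}\,w(T(i,j))$ and the final normalisation $\frac1n$ replaced by $\frac{1}{\NW}$. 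You correctly isolate the one genuinely delicate point (that the weights must not depend on the deck size for the recursion to close), which is exactly the hypothesis the paper emphasises.
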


We now focus on a natural choice of weight function of the form $w(j) = 
j^{\alpha}$; we shall denote the resulting shuffle as $\BLR_{n,\alpha}$, and 
write $\NA$ in place of $\NW$. For $\alpha=0$ we recover the unbiased one-sided 
transposition shuffle $\LR_{n}$, while if $\alpha>0$ ($\alpha<0$) the right 
hand is biased towards the top (respectively, bottom) of the deck. 

\begin{thm}
	\label{thm:biasedbounds}
	Define the time $t_{n,\alpha}$ as, 
	\[t_{n,\alpha}= \begin{cases}
	\NA/n^{\alpha} & \textnormal{ if } \alpha \leq 1 \\
	\NA/N_{\alpha-1}(n) & \textnormal{ if } \alpha \geq 1 \,.
	\end{cases} \]
	The biased one-sided transposition shuffle $\BLR_{n,\alpha}$ exhibits 
	a total variation cutoff at time $t_{n,\alpha}\log n$ for all $\alpha\in \mathbb{R}$. 
	Specifically for any $c_{1} > 5/2, c_{2} >\max\{2, 3-\alpha\}$ we have:
	
	\begin{eqnarray*}
		\limsup_{n\rightarrow \infty}\, \lVert 
		\BLR_{n,\alpha}^{t_{n,\alpha}\left(\log n + c_{1}\right)} -\pi_{n} 
		\rVert & 
		\leq & 	Ae^{-2c_{1}} 
		\textnormal{ 	for a universal constant } A, \textnormal{ for all } 
		\alpha \\
		\text{ and } \quad 
		\liminf_{n\to\infty}\, \lVert 
		\BLR_{n,\alpha}^{t_{n,\alpha}\left(\log n - \log \log n - 
		c_{2}\right)} 
		-\pi_{n} \rVert & 
		\geq 
		& \begin{cases}
		1 - \frac{\pi^{2}}{6(c_{2}-3+\alpha)^{2}} & \textnormal{ for } \alpha 
		\leq 1 \\
		1 - \frac{\pi^{2}}{6(c_{2}-2)^{2}} & \textnormal{ for } \alpha \geq 1 \,.
		\end{cases} 
	\end{eqnarray*}
\end{thm}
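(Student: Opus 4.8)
The strategy is to repeat the two-sided argument used for Theorem~\ref{LR main theorem}, tracking how every estimate scales with the weight function $w(j)=j^\alpha$. For the upper bound I would start from the $\ell^2$ bound of Lemma~\ref{ClassicL2} together with the biased eigenvalue formula of Lemma~\ref{biasedCompute}. The key combinatorial inputs — Lemma~\ref{swapping}, Lemma~\ref{seq}, Lemma~\ref{lem:sumtranspose}, Corollary~\ref{flip} and Lemma~\ref{order} — all have biased analogues: swapping two entries $m_1<m_2$ of a tableau changes $\eig(T)$ by a factor $\bigl(w(m_1)/m_1 - w(m_2)/m_2\bigr)\bigl((j_1-i_1+1)-(j_2-i_2+1)\bigr)/\NA$, and since $w(m)/m = m^{\alpha-1}$ is monotone in $m$ the same sign analysis goes through. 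Hence $\eig(\worst)\le\eig(T)\le\eig(\best)$ still holds, and the transpose identity becomes $\eig(T)+\eig(T')=\bigl(2\sum_m w(m)/m\bigr)/\NA$. So the sum over partitions again reduces, via Corollary~\ref{flip}, to a sum of $d_\lambda^2\,\eig(\best)^{2t}$ over $\lambda$ with $\eig(\best)\ge0$. I would then compute $\eig(T^\rightarrow_{(n-k,k)})$ and $\eig(T^\rightarrow_{(n-k,\star)})$ explicitly using $\sum_m T(m) w(m)/m$, split into ``large'' ($k\le n/4$) and ``small'' ($k>n/4$) partitions exactly as before, and bound $\sum_{\lambda_1=n-k}d_\lambda^2$ by $n^{2k}/k!$ via Lemma~\ref{lem:Diaconis}. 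The point of the definition of $t_{n,\alpha}$ is that $1-\eig(\best)$ for the dominant partition $(n-1,1)$ is $\asymp 1/t_{n,\alpha}$: for $\alpha\le1$ the first row contributes $\sum_{j\le n-1}j\cdot w(j)/(j\NA)=N_{\alpha}(n-1)/\NA\approx 1 - n^\alpha/\NA$, giving $t_{n,\alpha}=\NA/n^\alpha$; for $\alpha\ge1$ the relevant lost term is of order $N_{\alpha-1}(n)/\NA$, giving $t_{n,\alpha}=\NA/N_{\alpha-1}(n)$. With $t = t_{n,\alpha}(\log n + c_1)$ the geometric-series and Stirling arguments of the unbiased proof then show the two sums vanish (with the $c_1>5/2$ threshold absorbing the worst constant across all $\alpha$), and the one-dimensional term $\eig(T_{(1^n)})^{2t}$ is handled as in \eqref{eqn:first_partition}.

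For the lower bound I would again use the fixed-point test set: let $V_n$ be the top $n/m$ positions of the deck (with $m=\log n$), let $B_n$ be permutations with at least one fixed point in $V_n$, so $\pi_n(B_n)\le1/m$, and lower bound $\BLR_{n,\alpha}^t(B_n)\ge \mathbb P(|U_n^t|\ge1)$ where $U_n^t$ is the set of uncollected cards in $V_n$. The coupling of \eqref{eqn:indicator_fns} still applies; what changes is the collection rate. For $\alpha\le1$ the right hand is biased toward the bottom relative to $\alpha=1$, so $V_n$-positions are collected slowly: one shows $\mathbb P(L^{t+1}\in U_n^t)$ and $\mathbb P(R^{t+1}\in U_n^t)$ are each $O\!\bigl(|U_n^t|\cdot n^\alpha/\NA\bigr)=O\bigl(|U_n^t|/t_{n,\alpha}\bigr)$, up to $m$-dependent constants, by the same conditioning on $R^{t+1}=k$ and summing $w(k)/k$ over $k\in V_n$; dominating by a counting process $M_n^t$ as in \eqref{eqn:M_change} and applying Chebyshev to $\mathcal T'=\sum\mathcal T'_i$ gives mean $\gtrsim t_{n,\alpha}(\log n-\log\log n)-(3-\alpha)t_{n,\alpha}$ and variance $O(t_{n,\alpha}^2)$, whence the $c_2>\max\{2,3-\alpha\}$ bound with constant $\pi^2/6(c_2-3+\alpha)^2$. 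For $\alpha\ge1$ the same scheme with $t_{n,\alpha}=\NA/N_{\alpha-1}(n)$ yields the cleaner constant $\pi^2/6(c_2-2)^2$; the difference in the two cases is exactly the difference in how $\sum_{k\in V_n} w(k)/k$ compares to $(n/m)\cdot w(n)/n$ versus $N_{\alpha-1}(n)/n$.

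The main obstacle I expect is not any single estimate but the bookkeeping of the asymptotics of $\NA=\sum_{j\le n}j^\alpha$ uniformly in $\alpha\in\mathbb R$: one needs $\NA\sim n^{\alpha+1}/(\alpha+1)$ for $\alpha>-1$, $\NA\sim\log n$ for $\alpha=-1$, and $\NA\to\zeta(-\alpha)$ for $\alpha<-1$, and then $t_{n,\alpha}$, the ratios $n^{2(\text{(harmonic-type terms)})}$ in the large-partition geometric series, and the function $g(\gamma)$ controlling the small partitions all have to be shown negative/summable across every regime of $\alpha$. In particular, verifying that the biased analogue of the inequality $(x+1)(H_{2x}-H_{x+1})>(x-1)\log2$ — i.e. a lower bound on $\sum_{j=x+1}^{2x}w(j)/j$ relative to $w$-weighted row lengths — holds for all $\alpha$ will require the most care, since for large negative $\alpha$ the weights concentrate near the bottom of the deck and the ``small partition'' contribution must still be shown to decay. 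The definition of $t_{n,\alpha}$ having two branches at $\alpha=1$ is precisely what makes these inequalities work out, and checking continuity/consistency at $\alpha=1$ is the natural sanity check.
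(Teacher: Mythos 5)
Your plan is essentially the paper's argument for $\alpha\le 1$, but for $\alpha>1$ it has a genuine gap on both sides. On the upper bound: since $w(m)/m=m^{\alpha-1}$ is \emph{increasing} when $\alpha>1$, the sign in the swapping lemma reverses and the extremal tableau of shape $\lambda$ becomes $\worst$, not $\best$; the inequality $\eig(T)\le\eig(\best)$ that your reduction rests on is false in this regime. Concretely, $1-\eig(T^{\rightarrow}_{(n-1,1)})=n^{\alpha}/\NA$ while $1-\eig(T^{\downarrow}_{(n-1,1)})\approx N_{\alpha-1}(n)/\NA$, so bounding everything by $\eig(\best)$ would ``prove'' mixing at time $(\NA/n^{\alpha})\log n\sim\frac{1}{\alpha+1}n\log n$, which is below the true cutoff $\frac{\alpha}{\alpha+1}n\log n$ --- the argument would be unsound, not just lossy. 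Worse, the dominance-order monotonicity of Lemma~\ref{order} does \emph{not} carry over to the $\worst$ tableaux when $\alpha\ge 1$ (the paper states explicitly that this is the one result of Section~\ref{EIGENVALUES} that fails), so you cannot reduce to the top shapes $(n-k,k)$ and $(n-k,\star)$ the way you propose. The paper needs a new device here: the diagonal tableau $T^{\searrow}_{\lambda}$ and a rearrangement-type argument (Lemmas~\ref{lem:eigbound}, \ref{lem:boxindex}, \ref{lem:genbound2}) giving $\eig(\worst)\le\eig(T^{\searrow}_{(n-k,\star)})\le 1-k/n$, with the sharper bound $1-k(n-k)N_{\alpha-1}(n)/(n\NA)$ for $k\le n/4$. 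Your proposal has no substitute for this step.

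On the lower bound for $\alpha>1$: keeping the test set $V_n$ at the \emph{top} of the deck does not work. A top card is hit by the right hand alone at rate about $n^{\alpha}/\NA\sim(\alpha+1)/n$, so the coupon-collector obstruction only certifies order $\frac{1}{\alpha+1}n\log n$ steps --- a valid lower bound, but short of $t_{n,\alpha}\log n\sim\frac{\alpha}{\alpha+1}n\log n$, hence no cutoff. The paper instead takes $V_n=\{1,\dots,n/m\}$ at the \emph{bottom} of the deck and interchanges the roles of the two hands in the domination argument: a bottom card is hit by the right hand with probability at most $(n/m)^{\alpha}/\NA$ and by the left hand at rate roughly $N_{\alpha-1}(n)/\NA=1/t_{n,\alpha}$, which is exactly what produces the matching constant $\pi^{2}/6(c_2-2)^{2}$. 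Your remark that ``the same scheme'' works for $\alpha\ge1$ glosses over precisely this switch. (A minor further difference: the paper closes the upper-bound sums by quoting the classical bound from Diaconis's book rather than redoing the Stirling/geometric-series analysis, so the biased analogue of the harmonic-sum inequality you flag as the main obstacle is not actually needed on its route.)
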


The asymptotics of the cutoff times for the biased one-sided transposition shuffle are summarised in Table \ref{table:biasedtime}.

\begin{table}[H]
	\centering{
	\renewcommand{\arraystretch}{1.5}
	\begin{tabular}{c|c|c|c|c}
		& $\alpha \in (-\infty,-1)$ & $\alpha =-1$ & $\alpha \in (-1,1]$ & $\alpha \in (1,\infty)$
		\\\hline
		$t_{n,\alpha}\log n$ & $\zeta(-\alpha) n^{-\alpha}\log n$ & $n(\log n)^2$ & $\frac{1}{ 1+\alpha}n\log n$ & $\frac{\alpha}{1+\alpha}n\log n$ 
	\end{tabular}
	}
	\caption{Asymptotics of the cutoff time $t_{n,\alpha}\log n$ as $n\to\infty$, for different values of $\alpha$.}
	\label{table:biasedtime}
\end{table}

Note that the fastest mixing time is obtained when $\alpha 
=1$; using this weight function the shuffle is constant on the conjugacy class 
of transpositions, with transition probabilities similar to those of the classical random 
transpositions: $P_{n,1}((i \, j)) = 2/(n(n+1))$.
In this case we obtain a mixing time of $t_{n,1} \sim (n/2)\log n$ which agrees with that of 
the random transposition shuffle. The mixing time
is bounded above by $n\log n$ for all $\alpha>1$, but as $\alpha \to -\infty$ the 
mixing time is unbounded; in particular, when $\alpha<-1$ the mixing time 
is of order $O(n^{-\alpha}\log n)$. 

Theorem~\ref{thm:biasedbounds} is proved by generalising the results of 
Sections~\ref{UPPERBOUND} and \ref{LOWERBOUND}. Many of the arguments are 
almost identical to those already presented, so in what follows we shall simply 
sketch the main differences. We note that bounding the mixing time when using more general monotonic weight functions (but still satisfying Definition~\ref{def:weight}) is relatively straightforward: Lemma~\ref{biasedCompute} indicates that the upper bound on the mixing time is determined by whether $w(n)/n$ is increasing or decreasing in $n$. (See~\cite{MatheauRavenThesis} for further details.) Here we restrict attention to the case when $w(j)=j^\alpha$ since for this family of shuffles we are able to show the existence of a cutoff.

\subsection{Upper Bound for Biased One-sided Transpositions}
\label{GENUPPERBOUND}
We first of all note that the shuffles $\BLR_{n,\alpha}$ are still aperiodic, 
transitive, and reversible, meaning we may once again use Lemma~\ref{ClassicL2} 
to upper bound the mixing time. Furthermore, for $\alpha\leq 1$ the 
main results of Section~\ref{EIGENVALUES} still hold, meaning that it again 
makes sense to bound the eigenvalues of large and small partitions separately. 
For $\alpha \geq 1$ we will introduce a new tableau which will help us bound 
the eigenvalues for $P_{n, \alpha}$. After establishing bounds on our 
eigenvalues for all $\alpha$ we present a combined argument for the upper 
bound in Theorem \ref{thm:biasedbounds}.

\begin{lemma}
	\label{lem:genbound1}
	Let $\lambda \vdash n$ with $\lambda_{1} = n-k$. Then the eigenvalue 
	$\eig(\best)$ for the shuffle $\BLR_{n,\alpha}$ with $\alpha \leq1$ may be 
	bounded as follows:
	\[
	\eig(\best) \,\le\, 
	\begin{cases}
	1 - \frac{(n-k+1)kn^{\alpha}}{n\NA} & \quad \text{if $k \le n/4$} \\
	1 - \frac{k n^{\alpha}}{2\NA} & \quad \text{if $n/4<k$.} \\
	\end{cases}
	\]
\end{lemma}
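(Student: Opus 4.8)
The plan is to read off the eigenvalue directly from Lemma~\ref{biasedCompute} in the case $w(j)=j^{\alpha}$, which gives
\[\eig(\best) \,=\, \frac{1}{\NA}\sum_{m=1}^{n} T(m)\,m^{\alpha-1},\qquad T(m)=j-i+1 \text{ whenever }\best(i,j)=m,\]
and then exploit the very rigid shape of $\best$. Since $\best$ is filled left-to-right along successive rows and $\lambda_{1}=n-k$, the first row carries the values $1,\dots,n-k$ (each contributing $T(m)=m$), while the values $n-k+1,\dots,n$ fill rows $2,3,\dots$. Splitting the sum at $m=n-k$ and using $\sum_{m=1}^{n-k}m^{\alpha}=\NA-\sum_{m=n-k+1}^{n}m^{\alpha}$, I would rewrite the eigenvalue as
\[\eig(\best) \,=\, 1 - \frac{1}{\NA}\sum_{m=n-k+1}^{n}\bigl(m-T(m)\bigr)m^{\alpha-1}.\]
It then remains to lower bound $S:=\sum_{m=n-k+1}^{n}(m-T(m))m^{\alpha-1}$ in two different ways, one for each regime of $k$.

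For the bound with $k\le n/4$: if $m$ sits in row $i\ge 2$, column $j$, then $m=\lambda_{1}+\dots+\lambda_{i-1}+j$, so $m-T(m)=\lambda_{1}+\dots+\lambda_{i-1}+(i-1)\ge \lambda_{1}+1=n-k+1$. Since $\alpha\le 1$ the function $x\mapsto x^{\alpha-1}$ is non-increasing, so $m^{\alpha-1}\ge n^{\alpha-1}$ for $m\le n$; and there are exactly $k$ boxes in rows $\ge 2$. Multiplying these three facts gives $S\ge k(n-k+1)n^{\alpha-1}=k(n-k+1)n^{\alpha}/n$, which is the first claimed inequality.

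For the bound with $n/4<k$: here I would instead pull out the factor $m^{\alpha-1}\ge n^{\alpha-1}$ immediately, so that it suffices to show $\sum_{m=n-k+1}^{n}(m-T(m))\ge nk/2$. The sum of the $k$ consecutive integers $n-k+1,\dots,n$ equals $(2n-k+1)k/2$, while $\sum_{m=n-k+1}^{n}T(m)=\sum_{i\ge 2}\sum_{j=1}^{\lambda_{i}}(j-i+1)\le \sum_{i\ge 2}\binom{\lambda_{i}+1}{2}=\tfrac12\bigl(\sum_{i\ge 2}\lambda_{i}^{2}+k\bigr)\le\tfrac12(\lambda_{2}k+k)\le (n-k+1)k/2$, using $\lambda_{i}\le\lambda_{2}\le\lambda_{1}=n-k$ and $\sum_{i\ge 2}\lambda_{i}=k$. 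Subtracting leaves $\sum(m-T(m))\ge nk/2$, hence $S\ge n^{\alpha-1}\cdot nk/2 = n^{\alpha}k/2$, which is the second claimed inequality.

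There is no genuine obstacle here: the whole proof is bookkeeping around the combinatorics of $\best$. The one place requiring a little care is the choice of crude estimate for $\sum_{\text{rows}\ge2}T(m)$ in the second case — it must be exactly the bound $(n-k+1)k/2$ so that the subtraction telescopes down to the sharp value $nk/2$; a lossier estimate would not close. I would also remark in passing that both displayed inequalities are in fact valid for every $k$, and are merely stated (and later used) in the ranges of $k$ on which each is the stronger one.
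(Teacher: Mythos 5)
Your proof is correct, but it takes a genuinely different route from the paper's. The paper first invokes the dominance-order monotonicity (the biased analogue of Lemma~\ref{order}, valid for $\alpha\le 1$) to reduce to the extremal partitions $(n-k,k)$ for $k\le n/2$ and $(n-k,\star)$ for $k>n/2$, and then computes or bounds $\eig(T^{\rightarrow}_{(n-k,k)})$ and $\eig(T^{\rightarrow}_{(n-k,\star)})$ explicitly; the latter requires some bookkeeping with $l(\nu)=\lceil n/(n-k)\rceil$. You instead work directly with an arbitrary $\lambda$ having $\lambda_1=n-k$, rewrite the eigenvalue as $1-\frac{1}{\NA}\sum_{m=n-k+1}^{n}(m-T(m))m^{\alpha-1}$, and lower-bound the deficit: in the first regime via the pointwise bound $m-T(m)\ge n-k+1$ on the $k$ boxes below row one (together with $m^{\alpha-1}\ge n^{\alpha-1}$, which is where $\alpha\le1$ enters), and in the second via the exact telescoping $\sum(m-T(m))\ge\frac{(2n-k+1)k}{2}-\frac{(n-k+1)k}{2}=\frac{nk}{2}$, using $\sum_{i\ge2}\lambda_i^2\le\lambda_2 k\le(n-k)k$. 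I checked each of these steps and they are all sound. What your approach buys: it is self-contained (no appeal to the dominance lemma, which the paper only asserts carries over to the biased setting ``up to minor changes''), it avoids the case split at $k=n/2$ and the $(n-k,\star)$ computation entirely, and, as you note, both inequalities actually hold for every $k$. What the paper's approach buys is uniformity with the unbiased analysis of Section~\ref{UPPERBOUNDANALYSIS}, where the same reduction to $(n-k,k)$ and $(n-k,\star)$ is already in place.
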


\begin{proof}
	For $k\le n/2$, the maximum partition in the dominance order for the class 
	of 
	partitions with $\lambda_{1}=n-k$ is $(n-k,k)$, and so $\eig(\best) \le 
	\eig\left(T_{(n-k,k)}^{\rightarrow}\right)$. The eigenvalue of 
	$T_{(n-k,k)}^{\rightarrow}$ may be calculated by summing over the two rows 
	of the partition $(n-k,k)$ and 
	using Lemma \ref{biasedCompute}, as follows:
	\begin{eqnarray*}
			\NA \, 
		\textnormal{eig}\left(T_{(n-k,k)}^{\rightarrow}\right) & = &
		\sum_{\substack{m=1}}^{n-k} m^{\alpha} + \sum_{m=n-k+1}^{n} 
		(m-n+k-1)m^{\alpha-1} \\
		& = & \sum_{m=1}^{n} m^{\alpha} - (n-k+1) \sum_{m=n-k+1}^{n} 
		m^{\alpha-1} 
		\\
		& \leq & \NA -\frac{(n-k+1)kn^{\alpha}}{n}.
	\end{eqnarray*}
	This immediately proves the desired inequality for $k\le n/4$, and also 
	yields the stated bound for $n/4<k\le n/2$.
	
	For $k>n/2$ we once again need to bound 
	$\eig\left(T_{(n-k,\star)}^{\rightarrow}\right)$. Letting $\nu 
	=(n-k,\star)$ for ease of notation we calculate as follows:
	\begin{eqnarray*}
		\NA \, 
		\eig(T_{\nu}^{\rightarrow}) & = & \sum_{j=1}^{n-k} 
		j^{\alpha}  + \sum_{i=2}^{l(\nu)} 
		\sum_{j=1}^{\nu_{i}} (j-i+1)  ((i-1) (n-k)
			+j)^{\alpha-1}\\
		& = & \NA - \sum_{i=2}^{l(\nu)} 
		\sum_{j=1}^{\nu_{i}} (i-1) (n-k+1) ((i-1) (n-k)+j)^{\alpha-1} \\
		& \leq & \NA -\frac{(n-k+1)n^{\alpha}}{n} 
		\sum_{i=2}^{l(\nu)} (i-1)  \nu_{i} \,.
	\end{eqnarray*}
	By definition of the partition $\nu$, each row but the last has size $n-k$, 
	and the final row has size $\nu_{l(\nu)} = n-(l(\nu)-1)(n-k)$. In addition, 
	since $l(\nu)=\lceil{n/(n-k)}\rceil$ we may write $l(\nu) = 
	n/(n-k)+x$ for some $0\le x<1$.	Substituting these values we obtain:
	\begin{eqnarray*}
		\NA \, 
		\eig(T_{\nu}^{\rightarrow}) 
		& \leq & \NA
		-\frac{(n-k)n^{\alpha}}{n} 
		\frac{(l(\nu)-1)( 2n - (n-k)l(\nu))}{2}\\
		& = & \NA
		-\frac{n^{\alpha}}{2n} (n - (1-x)(n-k)) (n-x(n-k))\\
		& = & \NA
		-\frac{n^{\alpha}}{2n} (nk+x(1-x)(n-k)^2)\\
		& \leq & 
		\NA -\frac{kn^{\alpha}}{2}\,.  
	\end{eqnarray*}	
\end{proof}

\paragraph{}
With $\alpha\geq 1$ the main results (all but Lemma \ref{order}) of Section 2.1 hold with the roles of $\worst$ and $\best$ interchanged,
and so the bound on total variation  in equation \eqref{eqn:UB_eigs} now involves $\eig(\worst)$.   
Therefore, we look to 
bound the eigenvalue of $\worst$ when $\lambda_{1}=n-k$, and to do so we need to 
introduce a new tableau,  $T_{\lambda}^{\searrow}$.

\begin{defn}
	Let $T_{\lambda}^{\searrow}$ define the Young tableau formed by 
	filling in the diagonals of $\lambda$ from left to right, with each 
	diagonal filled from top to bottom. For example,
				\[T_{(3,2,1)}^{\searrow} = 
		\ytableausetup{mathmode,baseline,aligntableaux=center,boxsize=1.2em} \begin{ytableau} 3 & 5 & 6 \\ 2 & 4  \\ 1
		\end{ytableau} \, ,  \hspace{2cm}  T_{(4,2)}^{\searrow} = \begin{ytableau} 
		2& 4& 5& 6\\
		1& 3  	\end{ytableau}\, .\]
\end{defn}

\begin{lemma}
	\label{lem:eigbound}
	For $\alpha \geq1$, and  $\lambda \vdash n$ with $\lambda_{1} =n-k$ we 
	have, 
	\[\eig(T_{\lambda}^{\downarrow}) \leq 
	\eig\left(T_{(n-k,\star)}^{\searrow}\right) .\]
\end{lemma}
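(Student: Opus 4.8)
The plan is to sandwich the eigenvalue as
\[\eig(T_{\lambda}^{\downarrow}) \;\le\; \eig(T_{\lambda}^{\searrow}) \;\le\; \eig(T_{(n-k,\star)}^{\searrow})\,,\]
proving the left inequality by a rearrangement argument and the right one by a dominance monotonicity for $\searrow$-tableaux. Routing through $T_{\lambda}^{\searrow}$ rather than comparing $\eig(T_{\lambda}^{\downarrow})$ to $\eig(T_{(n-k,\star)}^{\downarrow})$ directly is forced on us: for $\alpha\ge 1$ the analogue of Lemma~\ref{order} for $\worst$ no longer holds, whereas $\eig(T^{\searrow})$ \emph{is} monotone in the dominance order.

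For the left inequality, write the formula of Lemma~\ref{biasedCompute} with $w(m)=m^{\alpha}$ as $\eig(T)=\frac{1}{\NA}\sum_{m=1}^{n}T(m)\,m^{\alpha-1}$, where $T(m)=j-i+1$ is the shifted content of the box of $T$ holding $m$. As $T$ ranges over all fillings of the fixed shape $\lambda$ by $1,\dots,n$, the multiset $\{T(m):m\}$ is fixed — it is the multiset of shifted contents of $\lambda$ — and only its pairing with the labels $1,\dots,n$ varies. Since $\alpha\ge 1$ the weights $m\mapsto m^{\alpha-1}$ are non-decreasing, so by the rearrangement inequality $\sum_m T(m)\,m^{\alpha-1}$ is maximised exactly when $m\mapsto T(m)$ is non-decreasing. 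The tableau $T_{\lambda}^{\searrow}$ realises this: the shifted content $j-i+1$ is constant on each diagonal $\{j-i=\mathrm{const}\}$ and increases with the diagonal, and the diagonals are filled in that order, so the labels $1,\dots,n$ land on boxes of weakly increasing shifted content (the top-to-bottom order inside a diagonal being immaterial to $\eig$). Hence $\eig(T)\le\eig(T_{\lambda}^{\searrow})$ for every filling $T$ of shape $\lambda$, and in particular for $T_{\lambda}^{\downarrow}$.

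For the right inequality I would first show that $\mu\trianglelefteq\nu$ (partitions of $n$) implies $\eig(T_{\mu}^{\searrow})\le\eig(T_{\nu}^{\searrow})$; as in the proof of Lemma~\ref{order} it is enough to treat the case where $\mu$ arises from $\nu$ by moving a single box from the end of row $a$ to the end of a lower row $b>a$. Such a move replaces the shifted content $\nu_a-a+1$ by $\nu_b-b+2$, and $(\nu_a-a+1)-(\nu_b-b+2)=(\nu_a-\nu_b)+(b-a)-1\ge 0$, so it replaces one element of the multiset of shifted contents by a weakly smaller one; consequently every order statistic of that multiset weakly decreases. Writing $\eig(T^{\searrow})=\frac{1}{\NA}\sum_{r=1}^{n}c_{(r)}\,r^{\alpha-1}$ with $c_{(1)}\le\cdots\le c_{(n)}$ the sorted shifted contents of the shape and $r^{\alpha-1}\ge 0$, the monotonicity follows. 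Since $\lambda_{1}=n-k$ forces $\lambda\trianglelefteq(n-k,\star)$ (and $(n-k,\star)=(n-k,k)$ when $k\le n/2$), taking $\nu=(n-k,\star)$ yields $\eig(T_{\lambda}^{\searrow})\le\eig(T_{(n-k,\star)}^{\searrow})$, which combined with the previous paragraph proves the lemma.

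I expect the main obstacle to be the bookkeeping in the first step: carefully justifying that the diagonal filling is exactly the rearrangement-optimal pairing (which hinges on the shifted content being constant along, and increasing with, diagonals, so the intra-diagonal order is irrelevant), and being careful with ties both when invoking the rearrangement inequality and when tracking order statistics in the second step. Note that $\alpha\ge 1$ enters only in the first step, to make the weights $m^{\alpha-1}$ non-decreasing so that the diagonal filling — rather than its reverse — is optimal; this is the combinatorial counterpart of the interchange of $\worst$ and $\best$ noted above.
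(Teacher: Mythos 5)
Your proof is correct and follows essentially the same route as the paper: both sandwich $\eig(T_{\lambda}^{\downarrow})$ through $\eig(T_{\lambda}^{\searrow})$, using the rearrangement inequality on the fixed multiset of shifted contents for the first step and dominance monotonicity of $\searrow$-tableaux for the second. The only cosmetic difference is that the paper justifies the second step by the pointwise observation $T_{\lambda}^{\searrow}(m)\le T_{(n-k,\star)}^{\searrow}(m)$ (diagonals move weakly rightward), whereas you derive the same fact via single-box moves and order statistics.
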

\begin{proof}
	Recall that, for any tableau $T_\lambda$, $T_\lambda(m) = j-i+1$ if and only if $m$ appears in box $(i,j)$ of $T_\lambda$. Note that this function is constant on integers appearing in the same diagonal of $T_\lambda$, and that $T_{\lambda}^{\searrow}(m)$ is non-decreasing in $m$.
	
	  Now consider all the values of $\worst(m)$ for $m \in [n]$, including repeats, and order them from smallest to largest as $c_1 \le c_2 \le \dots \le c_n$. Using Lemma \ref{biasedCompute}, we may upper bound $\eig(T_{\lambda}^{\downarrow})$ as follows: 	
	\begin{align*}
	 \NA \cdot \eig(\worst) &= \sum_{m=1}^{n} \worst(m) \cdot 
	m^{\alpha-1} \leq \sum_{m=1}^{n} c_{m} \cdot 
	m^{\alpha-1}\\ 
	&= \sum_{m=1}^{n} T_{\lambda}^{\searrow}(m) \cdot 
	m^{\alpha-1} 
	\leq \sum_{m=1}^{n} T_{(n-k,\star)}^{\searrow}(m) 
	\cdot 
	m^{\alpha-1}  =  \NA\cdot \eig(T_{(n-k,\star)}^{\searrow}) \,.
	\end{align*}
	The first inequality follows from the fact that $m^{\alpha-1}$ is increasing in $m$ for $\alpha \geq 1$, and so pairing up the constants $c_{m}$ and $m^{\alpha-1}$ cannot decrease the value of the sum. 
	For the second inequality, notice that $(n-k,\star)$ is obtained from $\lambda$ by moving boxes up and to the right. Thus the diagonal containing $m$ in $T_{(n-k,\star)}^{\searrow}$ is (weakly) to the right of the corresponding diagonal in $T_{\lambda}^{\searrow}$, and so $T_{\lambda}^{\searrow}(m) \leq T_{(n-k,\star)}^{\searrow}(m)$ for all $m$.

\end{proof}

\begin{lemma}
	\label{lem:boxindex}
	For all $k\leq n-2$ and all $m\in[n]$,
	\begin{align}
	T_{(n-k,\star)}^{\searrow} (m) \leq \frac{n-k}{n}\cdot m. \label{eqn:nkbound}
	\end{align}
	
\end{lemma}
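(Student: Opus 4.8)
The plan is to understand the structure of $T_{(n-k,\star)}^{\searrow}$ explicitly enough to read off, for each value $m\in[n]$, which diagonal of the partition $(n-k,\star)$ it lands in, and then bound the diagonal index $j-i+1$ linearly in $m$. Write $p = n-k$ for the common row length of $\nu := (n-k,\star)$, so $\nu$ has $l = \lceil n/p\rceil$ rows, all of length $p$ except possibly the last. The tableau $T_\nu^{\searrow}$ is filled diagonal by diagonal, left to right, each diagonal top to bottom. So the first diagonal (content $j-i+1 = 1-i+1 = 2-i$ running $i=1,\dots$; equivalently the $0$-content diagonal consists of boxes $(i,i)$) — more carefully, I will index diagonals by their \emph{content} $c = j-i$, so that $T_\nu^{\searrow}(m) = c+1$ where $c$ is the content of the diagonal containing $m$. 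The diagonals are filled in order of increasing content $c = -(l-1), \dots, -1, 0, 1, \dots, p-1$, and within the diagonal of content $c$ there are some number $a_c$ of boxes, filled top-to-bottom.

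First I would compute, for each content value $c$, the number $a_c$ of boxes on that diagonal of $\nu$ and hence the cumulative count $S_c := \sum_{c' \le c} a_{c'}$ of boxes filled by the time we finish diagonal $c$; thus the integers $m$ with $S_{c-1} < m \le S_c$ are exactly those with $T_\nu^{\searrow}(m) = c+1$. The inequality \eqref{eqn:nkbound} then amounts to showing $c+1 \le \frac{p}{n}\,m$ for every such $m$, and since the right-hand side is increasing in $m$ it suffices to check it at the smallest such $m$, namely $m = S_{c-1}+1$: that is, it suffices to prove
\[
(c+1)\cdot n \;\le\; p\cdot (S_{c-1}+1) \qquad \text{for all } c \in \{-(l-1),\dots,p-1\}.
\]
For $c\le 0$ this is essentially immediate since $c+1 \le 1$ while $p(S_{c-1}+1)\ge p \ge 1$ and we need the bound $n$ — wait, for $c$ near $-(l-1)$ we have $S_{c-1}$ possibly $0$, so the claim reads $(c+1)n \le p$; since $c+1 \le 2-l \le 0$ for $c \le 1-l$ this holds trivially (LHS non-positive). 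The substantive range is $0 \le c \le p-1$. Here I would use the explicit count: for $0\le c \le p-1$, the diagonal of content $c$ in $\nu$ runs from box $(1,c+1)$ down to box $(\min(l, \text{?}), \cdot)$; in the generic case where all but the last row have length $p$, one finds $S_{c-1} = \sum_{c'=-(l-1)}^{c-1} a_{c'}$, and a direct (if slightly fiddly) summation gives a clean lower bound of the form $S_{c-1} \ge (c+1) l - \binom{l}{2} - (\text{small correction})$, leading after multiplication by $p$ and using $pl \ge n$ to the desired inequality. The cleanest route is probably: the boxes with content $\le c-1$ are all boxes $(i,j)$ with $j - i \le c-1$, i.e. $j \le i + c - 1$; summing $\min(p, i+c-1)$ over $i = 1,\dots,l$ (and noting $i+c-1 \ge 1$ exactly when $i \ge 2-c$) gives $S_{c-1}$ exactly, and then one checks $p\,S_{c-1} \ge (c+1)n - p$ by comparing with $pl\ge n > p(l-1)$.

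The main obstacle I anticipate is the bookkeeping around the last, possibly short, row of $\nu$ and the two regimes $c \le 0$ versus $c \ge 1$: the count $a_c$ of boxes on diagonal $c$ is not simply $l$ or $l-1$ throughout, because diagonals near the top-left corner are cut off by the boundary of the diagram and the bottom row may be shorter than $p$. I would handle this by working with the exact cumulative count $S_{c-1} = \sum_{i=1}^{l}\bigl(\min(p,\,i+c-1)\bigr)_+$ rather than trying to guess $a_c$ row by row, since the cumulative quantity is what actually appears in the bound and is more robust to the irregular last row. A useful sanity check throughout: at $c = p-1$ (the last diagonal), $S_{c-1}+1$ should be the position of the first box of the rightmost diagonal, and the claimed bound $(c+1)n = pn \le p(S_{c-1}+1) = p n$ should hold with equality (or near-equality), which it does since $S_{p-1} = n$; this confirms the constant $\frac{n-k}{n}$ is sharp and guides the inequality in the right direction.
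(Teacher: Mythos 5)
Your plan is in essence the paper's own proof with different bookkeeping: both arguments reduce to the smallest (topmost) entry of each diagonal --- the first-row boxes, the diagonals of non-positive content being trivial --- and then verify the linear bound by counting how many boxes are filled before that diagonal; your target inequality $(c+1)n \le (n-k)\,(S_{c-1}+1)$ for $0\le c\le n-k-1$ is indeed correct and can be established by elementary casework, so the route is viable. Two concrete cautions on the execution. First, the formula you describe as the exact cumulative count, $S_{c-1}=\sum_{i=1}^{l}\bigl(\min(n-k,\,i+c-1)\bigr)_+$, is not exact: the last row of $(n-k,\star)$ has length $n-(l-1)(n-k)$, and treating it as length $n-k$ \emph{overestimates} $S_{c-1}$, which is the unsafe direction since the argument needs a lower bound on $S_{c-1}$; this is precisely the short-last-row issue you flagged, so it must actually be carried through rather than absorbed into the formula. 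Second, the finish is more than a one-line comparison with $(n-k)l\ge n>(n-k)(l-1)$: with the correct counts the verification splits according to whether the diagonal through $(1,j)$ is truncated by the bottom of the diagram or by its right edge (the paper's distinction between long diagonals, with $l^+=\lceil n/(n-k)\rceil$ boxes, and short ones), and the inequality is tight --- equality holds at the rightmost diagonal, where $m=n$ and $T_{(n-k,\star)}^{\searrow}(n)=n-k$ --- so no slack can be given away in the estimates. The paper organises exactly this casework more cleanly: on long diagonals the first-row entry is computed explicitly as $\binom{l^+}{2}+1+(j-1)l^+$ and the bound checked directly, while short diagonals are handled by a right-to-left induction $m(j)=m(j-1)-\min\{j,l^-\}$ anchored at the equality case $m=n$; your cumulative-count version works, but expect the "slightly fiddly" summation to require this kind of case split rather than a single comparison.
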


\begin{proof}	
	Let us write $l(n-k,\star) = l^{+} = \lceil \frac{n}{n-k} \rceil$ and 
	$l^{-} = \lfloor \frac{n}{n-k} \rfloor$.  	
	If $m_{1},m_{2}$ belong to 
	the same diagonal then $T_{(n-k,\star)}^{\searrow} (m_{1}) = 
	T_{(n-k,\star)}^{\searrow} (m_{2})$, and hence if \eqref{eqn:nkbound} holds for the smallest $m$ on a diagonal it holds for every entry of that diagonal. Furthermore, the bound trivially holds for any $m$ whose box $(i,j)$ satisfies $j-i+1 \leq0$ (for which the left hand side of \eqref{eqn:nkbound} is non-positive). Combining these two observations, we see that it suffices to prove the bound for those values of $m$ which appear in the first row of $T_{(n-k,\star)}^{\searrow}$.
	
	Note that no diagonal can contain more than $l^{+}$ boxes: call diagonals with $l^{-}$ or fewer boxes \emph{short} diagonals, and all others \emph{long} diagonals. Note that long diagonals can only exist when $l^+=l^-+1$. Any long diagonals clearly occur strictly before the short ones, when working left to right along the first row. If the box $(1,j)$ lies on a long diagonal, then the numbering pattern for $T^{\searrow}$ implies that this box will contain the integer $m=\binom{l^+}{2} + 1 + (j-1)l^+$. For this value of $m$, the right hand side of \eqref{eqn:nkbound} becomes
	\begin{align*}
	\frac{n-k}{n} \left( \frac{l^+(l^+-1)}{2} + 1 + (j-1)l^+ \right)\, & = \,
	\frac{(n-k)l^+}{n} \left( \frac{l^+-1}{2} + \frac{1}{l^{+}}-1 + j \right) \\
	\,&\ge \, j \, = \, T_{(n-k,\star)}^{\searrow} (m) \,,
	\end{align*}
	thanks to the definition of $l^+$ and the fact that $(x-1)/2 + 1/x  \geq 1$ if  $x\geq2$. 
	
	It remains to deal with the short diagonals which contain a box in the first row. For these diagonals we now work from right to left, and consider boxes $(1,n-k+1-j)$ for $j=1,2,\dots$.
	Let $m(j)$ denote the integer appearing in box $(1,n-k+1-j)$. We know that $m(1) = n$, and it is clear that \eqref{eqn:nkbound} holds for box $(1,n-k)$. It is then straightforward to see that 
	\[ m(j) = m(j-1) - \min\{j,l^-\} \]
	for any $j\ge 2$ for which box 	$(1,n-k+1-j)$ is part of a short diagonal. The result for all short diagonals now follows quickly by induction (using $l^-(n-k)/n\le 1$).	

\end{proof}

\begin{lemma}
	\label{lem:genbound2}
	Let $\lambda \vdash n$ with $\lambda_{1} = n-k$. Then the eigenvalue 
	$\eig(\worst)$ for the shuffle $\BLR_{n,\alpha}$ with $\alpha \geq1$ may be 
	bounded as follows:
	\[
	\eig(\worst) \,\le\, 
	\begin{cases}
	1 -	\frac{k(n-k) N_{\alpha-1}(n)}{n \NA} & \quad \text{if $k \le n/4$} \\
	1 - \frac{k}{n} & \quad \text{for all $k$.}  \\
	\end{cases}
	\]
\end{lemma}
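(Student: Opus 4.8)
The plan is to prove the two bounds separately: the ``for all $k$'' bound follows immediately from the tools already in place, whereas the sharper bound for $k\le n/4$ needs an explicit description of the tableau $T_{(n-k,k)}^{\searrow}$ followed by a delicate but elementary estimate. For the bound $\eig(\worst)\le 1-k/n$, Lemma~\ref{lem:eigbound} gives $\eig(\worst)\le\eig(T_{(n-k,\star)}^{\searrow})$, and combining Lemma~\ref{biasedCompute} (with $w(m)=m^{\alpha}$) with Lemma~\ref{lem:boxindex} yields, for $k\le n-2$,
\[
\NA\cdot\eig\bigl(T_{(n-k,\star)}^{\searrow}\bigr)=\sum_{m=1}^{n}T_{(n-k,\star)}^{\searrow}(m)\,m^{\alpha-1}\;\le\;\frac{n-k}{n}\sum_{m=1}^{n}m^{\alpha}=\frac{n-k}{n}\NA\,;
\]
dividing by $\NA$ gives the claim, and the remaining case $k=n-1$ (i.e.\ $\lambda=(1^{n})$) is trivial since there $\eig(\worst)\le 0$.

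For the sharper bound, fix $1\le k\le n/4$ (the case $k=0$ being trivial, as $\eig(\worst)=1$ then). In this range $l(n-k,\star)=\lceil n/(n-k)\rceil=2$, so $(n-k,\star)=(n-k,k)$ and Lemma~\ref{lem:eigbound} reduces the task to bounding $\eig(T_{(n-k,k)}^{\searrow})$. A short diagonal-by-diagonal check (cf.\ the worked example $T_{(4,2)}^{\searrow}$ above) shows that $T_{(n-k,k)}^{\searrow}(m)=m-\min(\lceil m/2\rceil,k)$ for $1\le m\le n$. Feeding this into Lemma~\ref{biasedCompute} and using $\NA=\sum_{m=1}^{n}m\cdot m^{\alpha-1}$, one checks that $\eig(T_{(n-k,k)}^{\searrow})\le 1-k(n-k)N_{\alpha-1}(n)/(n\NA)$ is equivalent to
\[
\sum_{m=1}^{2k-2}\bigl(k-\lceil m/2\rceil\bigr)\,m^{\alpha-1}\;\le\;\frac{k^{2}}{n}\,N_{\alpha-1}(n)\,,
\]
so it remains to establish this inequality.

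To do so, I would bound the left-hand side above: using $\lceil m/2\rceil\ge m/2$ and then extending the summation to $m=2k$ (which adds only non-negative terms), it is at most $\sum_{m=1}^{2k}\bigl(k-\tfrac m2\bigr)m^{\alpha-1}=k\,N_{\alpha-1}(2k)-\tfrac12 N_{\alpha}(2k)$. For the right-hand side, since $\alpha\ge1$ the ratio $N_{\alpha-1}(j)/j$ is non-decreasing in $j$ (being a running average of the non-decreasing sequence $j\mapsto j^{\alpha-1}$), and $2k\le n$, so $N_{\alpha-1}(n)\ge\tfrac{n}{2k}N_{\alpha-1}(2k)$ and hence $\tfrac{k^{2}}{n}N_{\alpha-1}(n)\ge\tfrac k2 N_{\alpha-1}(2k)$. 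Comparing the two sides, it suffices to show $k\,N_{\alpha-1}(2k)\le N_{\alpha}(2k)$, i.e.\ $\sum_{m=1}^{2k}(m-k)\,m^{\alpha-1}\ge0$; this holds by pairing the term $m=k+j$ with the term $m=k-j$ for $j=1,\dots,k-1$ (the $m=k$ term vanishing), since each pair contributes $j\bigl[(k+j)^{\alpha-1}-(k-j)^{\alpha-1}\bigr]\ge0$ and the leftover $m=2k$ term contributes $k(2k)^{\alpha-1}\ge0$.

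The main obstacle is the displayed inequality $\sum_{m=1}^{2k-2}(k-\lceil m/2\rceil)m^{\alpha-1}\le \tfrac{k^{2}}{n}N_{\alpha-1}(n)$: when $\alpha=1$ and $k$ is close to $n/4$ it is tight up to lower-order terms, so crude term-by-term estimates (e.g.\ replacing every $m^{\alpha-1}$ by $(2k)^{\alpha-1}$) lose too much. Routing the comparison through $N_{\alpha-1}(2k)$ via monotonicity of the running average is what preserves enough slack, reducing everything to the trivial rearrangement $\sum_{m=1}^{2k}(m-k)m^{\alpha-1}\ge0$.
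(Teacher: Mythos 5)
Your proof is correct. It follows the same skeleton as the paper's: reduce via Lemma~\ref{lem:eigbound} to the diagonal tableau on $(n-k,\star)=(n-k,k)$, use Lemma~\ref{lem:boxindex} for the crude bound $1-k/n$ (your separate treatment of $k=n-1$ is a nice touch, since Lemma~\ref{lem:boxindex} is only stated for $k\le n-2$), and for $k\le n/4$ isolate exactly the same excess term $kN_{\alpha-1}(2k)-\tfrac12 N_\alpha(2k)$ supported on $m\le 2k$ (you reach it from the exact formula $T^{\searrow}_{(n-k,k)}(m)=m-\min(\lceil m/2\rceil,k)$, which is correct, and then relax $\lceil m/2\rceil\ge m/2$, whereupon it coincides with the paper's bound $T^{\searrow}(m)\le m/2$ for $m\le 2k$). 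The genuine divergence is how this excess is dominated by $\tfrac{k^2}{n}N_{\alpha-1}(n)$: the paper evaluates $k\sum_{m\le 2k}m^{\alpha-1}(1-\tfrac{m}{2k})$ by a sum-to-integral comparison, getting $k(2k)^\alpha/(\alpha(1+\alpha))\le k^2(n/2)^{\alpha-1}$, and then invokes Jensen's inequality $(n/2)^{\alpha-1}\le N_{\alpha-1}(n)/n$; you instead stay entirely discrete, showing $kN_{\alpha-1}(2k)-\tfrac12 N_\alpha(2k)\le\tfrac k2 N_{\alpha-1}(2k)$ via the pairing inequality $kN_{\alpha-1}(2k)\le N_\alpha(2k)$, and then $\tfrac k2 N_{\alpha-1}(2k)\le\tfrac{k^2}{n}N_{\alpha-1}(n)$ via monotonicity of the running average $j\mapsto N_{\alpha-1}(j)/j$ together with $2k\le n$. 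Your route buys a cleaner argument: it avoids the continuous comparison (whose term-by-term justification for the non-monotone integrand requires some care) and Jensen, at no cost in the strength of the final bound; the paper's route gives an explicit closed-form estimate of the excess, which is mildly more informative but not needed for the lemma.
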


\begin{proof}
	We begin by quickly proving the second bound for all $k$ using Lemmas \ref{lem:eigbound} and \ref{lem:boxindex}:
	\[
	\eig(T_{\lambda}^{\downarrow}) \leq 
	\eig\left(T_{(n-k,\star)}^{\searrow}\right)  = \frac{1}{N_\alpha(n)} \sum_{m=1}^n T_{(n-k,\star)}^{\searrow} (m) \cdot m^{\alpha-1}\leq 
	 \frac{1}{N_\alpha(n)} \sum_{m=1}^n \frac{(n-k)}{n} \cdot m^{\alpha}
	=1 
	-\frac{k}{n}\,.
	\]	
	Now we prove the bound for $k\leq n/4$, by finding a tighter bound on 
	$T_{(n-k,\star)}^{\searrow}$ and once again using Lemma \ref{lem:eigbound}. For $k\le n/4$, we know 
	$(n-k,\star) = (n-k,k)$, and thanks to the order in which the boxes are filled, we see that $T_{(n-k,\star)}^{\searrow}(m) \leq m/2$ for $1\leq m\leq 2k$, and $T_{(n-k,\star)}^{\searrow}(m) = m-k$ for $2k+1\leq m\leq n$. This gives us the following simple bound:
	\begin{eqnarray}
	\NA \cdot \eig(T_{(n-k,\star)}^{\searrow}) & \leq & \sum_{m=1}^{2k} 
	\frac{m}{2} m^{\alpha-1} + \sum_{m=2k+1}^{n} (m-k) m^{\alpha-1} \nonumber \\ 
	& = & \NA - k N_{\alpha-1}(n)+ \left( k N_{\alpha-1}(2k)-  \frac12 N_\alpha(2k)\right) \,. \label{eqn:pre-int}
	\end{eqnarray}
	Now, 
	\begin{eqnarray*}
	 k N_{\alpha-1}(2k)-  \frac12 N_\alpha(2k) & = & k \sum_{m=1}^{2k} m 
	 ^{\alpha-1}\left(1 - \frac{m}{2k}\right) 
	 \leq k \int_{0}^{2k} x^{\alpha-1}\left(1 - 
	 \frac{x}{2k}\right)\mathrm{d} x  \\
	 &=& \frac{k(2k)^\alpha}{\alpha(1+\alpha)} \\
	 &\leq & \frac{2k^2 (n/2)^{\alpha-1}}{\alpha(1+\alpha)} \qquad\text{(since $k\leq n/4$)} \\
	 &\leq& k^2(n/2)^{\alpha-1}\,, 
	\end{eqnarray*}
since $\alpha \ge 1$. Finally, an application of Jensen's inequality shows that $ (n/2)^{\alpha-1} \le N_{\alpha-1}(n)/n$, and combining this with \eqref{eqn:pre-int} yields the desired result.	

\end{proof}


Returning to Theorem \ref{thm:biasedbounds}, recall that $t_{n,\alpha}=\NA/n^{\alpha}$ if 
$\alpha \leq 1$ and $t_{n,\alpha}=\NA/N_{\alpha-1}(n)$ if $\alpha\geq 1$.
Following the argument of Section \ref{UPPERBOUNDANALYSIS} 
and using Lemmas 
\ref{lem:genbound1} and \ref{lem:genbound2} for the appropriate values of $\alpha$, we may show that for any $\alpha\in\mathbb{R}$:
\begin{eqnarray*}
	4\lVert \BLR_{n,\alpha}^{t} - \pi_{n} \rVert^{2}_{\textnormal{TV}} 
	& \leq & 
	\left(\eig(1^{n})\right)^{2t} + 2\sum_{k=1}^{n/4} 
	{n\choose k}^{2} k!
	\left(1 - 
	\frac{(n-k)k}{n}t_{n,\alpha}^{-1}\right)^{2t} + 
	2\sum_{k=n/4}^{n-2} {n\choose k}^{2} k!
	\left( 1 - 
	\frac{k}{2}t_{n,\alpha}^{-1}\right)^{2t}.
\end{eqnarray*}
Substituting $t=t_{n,\alpha}(\log n + c)$ and once again 
using the inequality $1-x\le e^{-x}$, we are left with two sums to control;
both have previously been shown to be bounded above by $Ae^{-4c}$, for some 
universal constant $A$, when $c > 5/2$ and $n$ is sufficiently large \cite[page 
42]{Diaconis1988}.

\subsection{Lower Bound for Biased One-sided Transpositions}
\subsubsection{Case: $\alpha\leq 1$}
We use a coupon-collecting argument as in Section~\ref{LOWERBOUND}, once again letting $V_n = 
\{n-n/m+1,\dots,n-1,n\}$ with $m =\log n$, and considering the set $B_n = \{\rho\in S_n \, | \, \text{$\rho$ has at least 1 fixed point in $V_n$}\}$. Equation~\eqref{eqn:indicator_fns} still holds for the biased version of the shuffle, but we now modify the bounds in \eqref{eqn:bound_minus2} and \eqref{eqn:bound_minus1} as follows, using the inequality 
$k^\alpha \le (\frac{m}{m-1})^{1-\alpha}n^\alpha$ for all $k\in V_n$ (which holds for all $\alpha\le 1$):
\begin{align*}
\mathbb{P}(L^{t+1}\in \hat U_n^t) &\,=\, \sum_{k\in \hat 
	U_n^t} \frac{w(k)}{\NA}\frac{k-(n-n/m)}{k} +  \sum_{k\in 
	V_n\setminus \hat U_n^t}\frac{w(k)}{\NA}\frac{|U_n^t|}{k} \nonumber \\
&\,\le\, \frac{ (\frac{m}{m-1})^{1-\alpha}n^\alpha}{\NA}\frac{\sum_{k=1}^{|U_n^t|} k + (n/m -|U_n^t|)|U_n^t|}{(n-n/m)} 
\,\le\,\frac{(\frac{m}{m-1})^{1-\alpha}n^\alpha|U_n^t|}{\NA(m-1)}\,;
\\
\mathbb{P}(R^{t+1}\in U_n^t) &\,\le\, \frac{(\frac{m}{m-1})^{1-\alpha}n^\alpha |U_n^t|}{\NA}\,. 
\end{align*}
Using these as before we construct a counting process $M_n^t$ which dominates the number of collected cards; the expression for $p_i$ in \eqref{eqn:M_up} becomes 
\[ p_i = \left(\frac{m}{m-1}\right)^{2-\alpha}\frac{n^\alpha}{\NA} \left( \frac nm-i\right) \,, \]
and this is easily checked to be strictly less than one for $n$ sufficiently large, whatever the value of $\alpha\le 1$.
The remainder of the analysis mirrors the unbiased case: using the new expression for $p_i$ the distribution of the random variable $\mathcal T'_i$ is exactly as given in \eqref{eqn:Tprime}, and we arrive at 
\begin{eqnarray*}
	\E[\mathcal{T}^{\prime}] & = & 
	\sum_{i=0}^{n/m-1} 
	\frac{m-1}{mp_i} \ge \frac{\NA}{n^\alpha}(\log n  -\log \log n -(3-\alpha)) \,;\\
	\Var[\mathcal{T}^{\prime}] & \le & \sum_{i=0}^{n/m-1}  \frac{1}{p_i^2}\le
		\frac{\pi^{2}}{6}\frac{\NA^2}{n^{2\alpha}}\,.
\end{eqnarray*}
The proof of the lower bound is completed by using these new bounds in Chebychev's inequality.

\subsubsection{Case: $\alpha \geq 1$}
With $\alpha\ge 1$, the right hand is now more likely to choose cards near the top of the deck, and so it makes sense to swap the roles of the right and left hands in our coupon-collecting argument. To that end, let $V_{n} = \{1,\dots,n/m\}$ and let
\[B_n = \{\rho\in S_n \, | \, \text{$\rho$ has at least 1 fixed point in 
$V_n$}\}\,.\]
As in Section~\ref{LOWERBOUND} we let $U_n^t$ denote the set of 
uncollected cards in $V_n$ after $t$ steps of the biased one-sided transposition shuffle. The change in the number of collected cards in \eqref{eqn:indicator_fns} then holds with $L^{t+1}$ and $R^{t+1}$ interchanged, and we may replace the inequalities in \eqref{eqn:bound_minus2} and \eqref{eqn:bound_minus1} with the following bounds (which hold for sufficiently large $n$):

\begin{align*}
\mathbb{P}(R^{t+1}\in U_n^t) &= \frac{1}{\NA} \sum_{i\in U_n^t} i^{\alpha} \leq \frac{|U_n^t|}{\NA}(n/m)^\alpha
 \leq \frac{|U_n^t|}{\NA} 
\frac{n^{\alpha}}{\alpha (m-1)} \leq \frac{|U_n^t|}{\NA}\frac{ N_{\alpha-1}(n)}{(m-1)}\,; 
 \\
\mathbb{P}(R^{t+1}\notin U_n^t,L^{t+1}\in U_n^t) &\le \frac{|U_n^t|}{\NA} 
\sum_{i=|U_n^t|+1}^{n} \frac{i^{\alpha}}{i} \leq 
\frac{|U_n^t|N_{\alpha-1}(n)}{\NA}\,.
\end{align*}
Analysis of the resulting counting process $M_n^t$ shows that for $\alpha\ge 1$ the biased one-sided transposition shuffle satisfies the lowed bound in Theorem~\ref{thm:biasedbounds}.

\section*{Appendix: Lifting Eigenvectors Analysis}
\label{AppendA}

We work with the 
group algebra $\mathfrak{S}_{n} = \mathbb{C}[S_{n}]$ and its representations.
We begin with some of the background and basic constructions.

For each $n\in \mathbb{N}$ let $[n] = \{1,\ldots,n\}$ denote the set consisting 
of the first $n$ natural numbers. 
Given $n\in \mathbb{N}$ we denote by $W^n$ the set of words of length $n$ in 
the elements of $[n]$, 
where by a word of length $n$ we simply mean a string $w=w_1\cdot w_2\cdot \ldots \cdot
w_n$ of $n$ elements from $[n]$, allowing repeats.
Note that in forming words we simply regard the elements of $[n]$ as distinct 
symbols; we separate symbols by a dot $\cdot$. 
(It is notationally convenient later on that these symbols are positive 
integers.)
There is a natural action of the symmetric group $S_n$ on $W^n$: given a word 
$w=w_1\cdot w_2\cdot \ldots \cdot w_n \in W^n$ and an element $\sigma \in S_n$, we let 
$\sigma(w) := w_{\sigma^{-1}(1)}\cdot w_{\sigma^{-1}(2)}\cdot \ldots \cdot 
w_{\sigma^{-1}(n)} \in W^n$. 
We emphasise that this is the action of $S_n$ on words by place permutations, 
it is NOT the action of $S_n$ induced by its action on $[n]$, 
e.g. $(123) (2\cdot3\cdot2) = 2\cdot2\cdot3  \neq 3\cdot1\cdot3$.
We denote by $M^n$ the complex space with basis $W^n$, so $M^n$ is an 
$n^n$-dimensional vector space over $\mathbb{C}$.
The $S_n$-action by place permutations on $W^n$ extends to give $M^n$ the 
structure of an $\mathfrak{S}_n$-module.

To each word $w \in W^n$ we can associate an $n$-tuple of non-negative integers, which we call its \emph{evaluation}, as follows. For $1\leq i \leq n$, let $\eval_i(w)$ count the number of 
occurrences of the symbol $i$ in the word $w$, and then let $\eval(w):= 
(\eval_1(w),\ldots,\eval_n(w))$.
Note that $\sum_{i=1}^n \eval_i(w) = n$ for any word $w$ in $M^n$.
If in addition $\eval(w)$ is a non-increasing sequence of integers, then we 
identify $\eval(w)$ with the corresponding partition of $n$ (this is a matter 
of ``forgetting'' any zeroes at the end of the tuple).

For a partition $\lambda\vdash n$, a Young tableau $T$ of shape $\lambda$ 
naturally corresponds to a word in $W^n$. 
Write the word $w(T) = w_1\cdot w_2\cdot \ldots \cdot w_n$ by setting $w_{T(i,j)} = i$ for 
each box $(i,j)$ in $T$.
Equivalently, the numerical entries in the $i^{\rm th}$ row of $T$ tell us in 
which positions to put the symbol $i$ in the word $w(T)$.
Note that two tableaux give the same word if and only if they have the same 
shape and the same set of entries in each row (i.e., they correspond to the 
same \emph{tabloid}).

\begin{defn}
	To every partition $\lambda \vdash n$ we may 
	associate a simple module  $S^{\lambda}$ of $\mathfrak{S}_{n}$ called the 
	\emph{Specht module} for $\lambda$. The Specht module $S^{\lambda}$ has 
	dimension $d_{\lambda}$.
\end{defn}

\begin{defn}
	For an $n$-tuple $\lambda = (\lambda_1,\ldots,\lambda_n)$ of non-negative 
	integers summing to $n$, define $M^{\lambda}$ to be the span of the words 
	$w \in W^n$ with $\eval(w) = \lambda$. 
	Since $S_n$ acts by place permutations, this is clearly an 
	$\mathfrak{S}_n$-submodule of $M^n$.
	
	If $\lambda\vdash n$ is a partition of $n$, then we allow ourselves an 
	abuse of notation and also consider $\lambda$ as an $n$-tuple by adding 
	some zeroes on the end (if necessary). We can then attach the module 
	$M^\lambda$ to a partition $\lambda$. There is a unique copy of the Specht 
	module $S^{\lambda}$ as a submodule of $M^{\lambda}$
\end{defn}

Let us explicitly identify the unique copy of $S^{\lambda} \subset M^{\lambda}$ 
as mentioned in the above definition.
Let $T$ be a standard Young tableau of shape $\lambda$.
Define $C_T\subseteq S_n$ to be the column stabilizer for $T$ -- that is $C_T$ 
is the subgroup of $S_n$ consisting
of permutations which permute the elements in each column of $T$.
Corresponding to $T$ we have the word $w(T)\in M^\lambda$ as above.
Form a new element 
$s(T)$ of $M^{\lambda}$, where
\[s(T) := \sum_{\sigma \in \textnormal{ColStab}(T)} 
\textnormal{sign}(\sigma)w(\sigma(T)).\]
The Specht module $S^\lambda$ is the subspace of $M^\lambda$ with basis the 
elements $s(T)$, where $T$ runs over all standard 
Young tableaux of shape $\lambda$.

For example, if $\lambda = (3,1)$ then we have $3$ standard Young tableaux,
\[\young(123,4)\hspace{1.5em} \young(124,3) \hspace{1.5em} \young(134,2)\]
and we only have the first column to permute in each case. It is easy to check 
that 
$$
S^{(3,1)} = \langle 
1\cdot1\cdot1\cdot2 - 2\cdot1\cdot 1\cdot1, \;
1\cdot1\cdot2\cdot1-2\cdot1\cdot1\cdot1,\; 
1\cdot2\cdot1\cdot1-2\cdot1\cdot1\cdot1 \rangle.
$$
It is a standard result in the theory that $S^\lambda$ has multiplicity $1$ as a summand of the permutation module $M^\lambda$.

\begin{lemma}[Theorem 2.11.2 of \cite{sagan2013symmetric}]
	\label{lem:youngrule}
	For $\lambda \vdash n$ we have,
	\[M^{\lambda} \cong \bigoplus_{\mu \,\trianglerighteq  \,\lambda} K_{\lambda,\mu} S^{\mu}.\]
	where $K_{\lambda,\mu}S^\mu$ denotes a direct 
	sum of $K_{\lambda,\mu}$ copies of $S^\mu$.
	The coefficients $K_{\lambda,\mu}$ are called Kostka numbers, and for all $\lambda \vdash n$ we know $K_{\lambda,\lambda} = 1$.
\end{lemma}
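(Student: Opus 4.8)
This is Young's rule; the plan is the classical semistandard-homomorphism argument. Because $\mathfrak{S}_n=\mathbb{C}[S_n]$ is semisimple with the Specht modules $S^{\mu}$, $\mu\vdash n$, forming a complete set of simple modules, $M^{\lambda}$ is a direct sum of Specht modules, and by Schur's lemma the multiplicity of $S^{\mu}$ in $M^{\lambda}$ equals $\dim\operatorname{Hom}_{\mathfrak{S}_n}(S^{\mu},M^{\lambda})$. So it suffices to show this Hom-space has dimension equal to the number of semistandard tableaux of shape $\mu$ and content (evaluation) $\lambda$ --- the Kostka number $K_{\lambda,\mu}$ in the convention of the statement --- and then to deduce from the combinatorics which $\mu$ occur.

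The first ingredient is a family of homomorphisms. A map out of the cyclic permutation module $M^{\mu}$ is determined by the image of its generating word $w(T_\mu^{\rightarrow})$, and, since $M^{\mu}\cong\operatorname{Ind}_{S_{\mu}}^{S_n}\mathbf{1}$ for the row group $S_{\mu}$ of $T_\mu^{\rightarrow}$, that image may by Frobenius reciprocity be any $S_{\mu}$-fixed element of $M^{\lambda}$. Each semistandard tableau $U$ of shape $\mu$ and content $\lambda$ singles out such a fixed element; composing the resulting map $M^{\mu}\to M^{\lambda}$ with the inclusion $S^{\mu}\subseteq M^{\mu}$ gives $\theta_U\in\operatorname{Hom}_{\mathfrak{S}_n}(S^{\mu},M^{\lambda})$, which on the polytabloid $s(T_\mu^{\rightarrow})$ acts by recolouring the entries of $T_\mu^{\rightarrow}$ according to $U$ and antisymmetrising over columns. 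The goal is then that $\{\theta_U: U \textnormal{ semistandard of shape } \mu,\ \textnormal{content } \lambda\}$ is a basis of $\operatorname{Hom}_{\mathfrak{S}_n}(S^{\mu},M^{\lambda})$.

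Linear independence is the easier half: evaluating the $\theta_U$ on $s(T_\mu^{\rightarrow})$ and, for each $U$, reading off the word that is largest in a suitable total order on words, one finds the images are triangular with respect to the dominance order on semistandard tableaux. Spanning --- the standard basis theorem --- is the technical heart, and the step I expect to be the main obstacle: given an arbitrary $\phi\in\operatorname{Hom}_{\mathfrak{S}_n}(S^{\mu},M^{\lambda})$, one expands $\phi(s(T_\mu^{\rightarrow}))$ into words of $M^{\lambda}$ and, using that $\phi$ annihilates every column-antisymmetrising (Garnir) relation in $S^{\mu}$, runs the straightening algorithm to rewrite this expansion over semistandard $U$ only; verifying that straightening produces nothing outside the span of the $\theta_U$ is where the real combinatorial work lies.

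Finally, the counting and the range of $\mu$ are elementary. In a semistandard tableau the entry of box $(i,j)$ is at least $i$, so the values $1,\dots,k$ lie in the first $k$ rows; comparing box counts forces $\lambda_1+\dots+\lambda_k\le\mu_1+\dots+\mu_k$ for all $k$, i.e. $K_{\lambda,\mu}=0$ unless $\mu\trianglerighteq\lambda$, while for $\mu=\lambda$ the same constraint leaves only the tableau with every entry of row $i$ equal to $i$, so $K_{\lambda,\lambda}=1$. Combining this with the Hom-space computation gives $M^{\lambda}\cong\bigoplus_{\mu\trianglerighteq\lambda}K_{\lambda,\mu}S^{\mu}$ with $K_{\lambda,\lambda}=1$. (An alternative is the character route: $M^{\lambda}$ affords $\operatorname{Ind}_{S_{\lambda}}^{S_n}\mathbf{1}$, whose Frobenius characteristic is the complete homogeneous symmetric function $h_{\lambda}$, and the Schur expansion $h_{\lambda}=\sum_{\mu}K_{\lambda,\mu}s_{\mu}$ --- itself established via RSK or Jacobi--Trudi --- yields the decomposition after applying the inverse characteristic; the combinatorial obstacle is simply relocated to that identity.)
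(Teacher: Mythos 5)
The paper offers no proof of this lemma at all: it is quoted directly as Theorem 2.11.2 of Sagan's book, so there is no internal argument to compare against. Your outline reconstructs precisely the standard proof from that source: semisimplicity of $\mathbb{C}[S_n]$ plus Schur's lemma reduces the multiplicity of $S^{\mu}$ in $M^{\lambda}$ to $\dim\operatorname{Hom}_{\mathfrak{S}_n}(S^{\mu},M^{\lambda})$; the semistandard homomorphisms $\theta_U$ (equivalently, $S_{\lambda}$-fixed vectors via Frobenius reciprocity) show this dimension is $K_{\lambda,\mu}$; and your elementary argument that semistandardness forces $K_{\lambda,\mu}=0$ unless $\mu\trianglerighteq\lambda$, with $K_{\lambda,\lambda}=1$ coming from the unique tableau whose $i$th row is filled with $i$'s, is correct. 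The one step you do not carry out is the spanning half of the standard basis theorem (the straightening/Garnir argument), which you candidly flag as the technical heart; as written your text is a correct plan for that step rather than a proof of it, but it is exactly the content of the theorem the paper cites, and nothing in your outline would fail. The character-theoretic alternative you mention ($h_{\lambda}=\sum_{\mu}K_{\lambda,\mu}s_{\mu}$ via the Frobenius characteristic) is an equally valid route, with the combinatorial work relocated as you say.
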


\paragraph{}
The submodule $M^{(1^{n})}$ is of particular importance because it is spanned 
by the $n!$ permutations of the word $1\cdot 2\cdot \ldots \cdot n \in W^n$, and therefore 
we can model our shuffle on $n$ cards by considering a linear operator on this 
space.
In representation-theoretic terms, $M^{(1^n)}$ can be identified as the regular 
module for $\mathfrak{S}_{n}$;  we may state a classical result about this 
regular module in the notation we have now set up.

\begin{corollary}[Example 2.11.6 of \cite{sagan2013symmetric}]
	\label{classiciso}
	\[M^{(1^{n})} \cong \bigoplus_{\lambda \vdash n} d_{\lambda} S^{\lambda}  \text{ as $\mathfrak{S}_{n}$-modules}.\]
\end{corollary}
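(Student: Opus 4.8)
The plan is to deduce the decomposition directly from Young's rule (Lemma~\ref{lem:youngrule}), applied to the permutation module $M^{(1^n)}$ itself. Setting $\lambda = (1^n)$ in Lemma~\ref{lem:youngrule} gives $M^{(1^n)} \cong \bigoplus_{\mu\,\trianglerighteq\,(1^n)} K_{(1^n),\mu}\, S^{\mu}$, so two things remain to be checked: that \emph{every} partition $\mu\vdash n$ occurs in the sum, and that its multiplicity $K_{(1^n),\mu}$ equals $d_\mu$. The first is immediate, since $(1^n)$ is the unique minimal element of the dominance order on partitions of $n$ -- any $\mu\vdash n$ is obtained from $(1^n)$ by moving boxes up and to the right -- so $\mu\trianglerighteq(1^n)$ for all $\mu\vdash n$.

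For the multiplicities, recall that $K_{(1^n),\mu}$ counts semistandard Young tableaux of shape $\mu$ and content $(1^n)$: fillings of the Young diagram of $\mu$ in which each of $1,\dots,n$ appears exactly once, with entries weakly increasing along rows and strictly increasing down columns. Because every symbol is used exactly once, the weak inequalities along rows become strict automatically, so these fillings are precisely the standard Young tableaux of shape $\mu$. Hence $K_{(1^n),\mu} = |\SYT(\mu)| = d_\mu$, and after relabelling $\mu$ as $\lambda$ we obtain exactly $M^{(1^n)}\cong\bigoplus_{\lambda\vdash n} d_\lambda S^\lambda$. This step is just unwinding the definition of the Kostka numbers, so it presents no real difficulty.

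An alternative route, not relying on Young's rule, uses the identification of $M^{(1^n)}$ with the group algebra $\mathfrak{S}_n = \mathbb{C}[S_n]$ noted above: sending a permutation $\tau$ to its one-line-notation word, one checks that (after precomposing with inversion on $S_n$) the place-permutation action on words is intertwined with left translation, so $M^{(1^n)}$ is the left regular module. One then quotes the classical decomposition of the regular representation of a finite group -- each irreducible appears with multiplicity equal to its dimension, which follows from Artin--Wedderburn or from the regular-character computation $\langle \chi_{\mathrm{reg}}, \chi_V\rangle = \dim V$ -- together with the basic structure theory giving that $\{S^\lambda : \lambda\vdash n\}$ is a complete irredundant list of the irreducible $\mathfrak{S}_n$-modules with $\dim S^\lambda = d_\lambda$. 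In either approach the substantive input is an already-quoted theorem (Young's rule, or the classification of Specht modules); granting that, the corollary follows in a line, so there is no genuine obstacle to overcome.
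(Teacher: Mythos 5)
Your proposal is correct. Note first that the paper does not prove this corollary at all: it simply identifies $M^{(1^n)}$ as the regular module for $\mathfrak{S}_n$ (the span of the $n!$ permutations of the word $1\cdot 2\cdot\ldots\cdot n$, acted on by place permutations) and cites the classical decomposition of the regular representation from Sagan's Example 2.11.6. That is precisely your second, ``alternative'' route, so in that sense you have reproduced the paper's argument, and your care in checking that place permutations on one-line words intertwine with left translation (after composing with inversion) is exactly the point being elided by the citation. Your primary route --- specialising the paper's own Lemma~\ref{lem:youngrule} to $\lambda=(1^n)$, observing that $(1^n)$ is the minimum of the dominance order so every $\mu\vdash n$ appears, and computing $K_{(1^n),\mu}=|\SYT(\mu)|=d_\mu$ because content-$(1^n)$ semistandard tableaux are exactly standard tableaux --- is a genuinely different and self-contained derivation from results already stated in the paper. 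What it buys is independence from the general theory of the regular representation (Artin--Wedderburn or character orthogonality) and from the completeness of the list of Specht modules; what it costs is reliance on the combinatorial description of Kostka numbers, which the paper's statement of Young's rule does not actually supply (it only records $K_{\lambda,\lambda}=1$), so that step is still an appeal to standard facts outside the paper. Either way the corollary is a one-line consequence of quoted results, as you say.
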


For modelling our shuffle $\LR_{n}$ acting on the space $\mathfrak{S}_{n}$ we 
need to turn it into a linear operator. In fact we may turn it into an element 
$Q_{n}$ of our group algebra $\mathfrak{S}_{n}$.

\begin{defn}
	\label{RLop}
	Let $n \in \mathbb{N}$.
	The one-sided transposition shuffle on $n$ cards may be viewed as the 
	following element of the group algebra $\mathfrak{S}_n$.
	\[\sum_{1 \leq i \leq j \leq n}\RL_{n}((i\,j))  (i\,j)= \sum_{1 \leq i 
		\leq j 
		\leq n}\frac{1}{nj} (i\,j)  .\]
	To simplify our calculations it is convenient to scale this operator by 
	$n$, so we introduce a new operator 
	\[Q_n:= \sum_{1 \leq i \leq j \leq n}\frac{1}{j} (i\,j)  .\]
\end{defn}

By realising the shuffling operator as an element of the group algebra we can 
concentrate on finding the eigenvalues of $Q_{n}$ on $M^{(1^{n})}$. 
Furthermore, applying 
Corollary~\ref{classiciso}, we can reduce the problem of finding 
eigenvalues for the shuffle on $M^{(1^n)}$ to the problem of finding 
eigenvalues on the Specht modules $S^\lambda$.
Moreover, since the operator is acting as an element of the group algebra, we 
are then free to study its action on the natural copy of $S^\lambda$ inside 
$M^\lambda$ 
to solve this problem, rather than having to stick to the copies of $S^\lambda$ 
inside $M^{(1^n)}$.
This turns out to be very useful, because there is a natural way of relating 
eigenvectors and eigenvalues corresponding to different partitions
according to the branching rules in Figure \ref{lattice}. Note that we allow the
special case $n=0$ with partition $(0)$ and corresponding Young 
diagram $\emptyset$.

\begin{figure}[H]
	\[
\begin{tikzpicture}
\node (realstart) at (0,-0.9) {$\emptyset$};

\node (start) at (0,0) {\tiny\yng(1)};

\node (2) at (-1,1.5) {\tiny\yng(2)};
\node (11) at (1,1.5) {\tiny\yng(1,1)};

\node (3) at (-2,3) {\tiny\yng(3)};
\node (21) at (0,3) {\tiny\yng(2,1)};
\node (111)  at (2,3)  {\tiny\yng(1,1,1)};

\node (4) at (-3,4.5){\tiny\yng(4)};
\node (31) at(-1.5,4.5) {\tiny\yng(3,1)};
\node (22) at(0,4.5) {\tiny\yng(2,2)};
\node (211) at (1.5,4.5) {\tiny\yng(2,1,1)};
\node(1111) at (3,4.5) {\tiny\yng(1,1,1,1)};

\draw[->] (realstart) -- (start);
\draw[->] (start) -- (2);
\draw[->] (start) -- (11);

\draw[->] (2) -- (21);
\draw[->] (11) -- (111);
\draw[->] (11) -- (21);
\draw[->] (2) -- (3);

\draw[->] (111) -- (211);
\draw[->] (21) -- (211);
\draw[->] (111) -- (1111);
\draw[->] (21) -- (22);
\draw[->] (21) -- (31);
\draw[->] (3) -- (31);
\draw[->] (3) -- (4);

\end{tikzpicture}
\]
	\caption{Young's lattice for partitions of size $n \in \{0,1,2,3,4\}$.}
	\label{lattice}
\end{figure}

\paragraph{}
The one-sided transposition shuffle admits a recursive structure which is seen when we focus on the difference of $Q_{n+1}$ and $Q_{n}$,
\begin{eqnarray}
Q_{n+1} - Q_{n} = \frac{1}{(n+1)}\sum_{1\leq i \leq (n+1)} (i 
\hspace{0.2cm} n+1). \label{eqn:lrdifference}
\end{eqnarray}
This signifies that the only difference between the one-sided transposition shuffle on $n+1$ and $n$ cards is the movement of the new card in position $n+1$. We now define some important linear operators which will allow us to study the 
$Q_n$ inductively using equation \eqref{eqn:lrdifference}.

\begin{defn}
	\label{lin}
	We define two linear operators on the spaces spanned by words. To do so, it 
	is enough to define the effect on any given word. 
	\begin{enumerate}
		\item Let $a \in [n+1]$. Define the \emph{adding 
			operator} $\sh_{a}:M^{n} \rightarrow M^{n+1}$ as follows: given a 
			word $w \in W^n$, define
		\[\sh_{a}(w) := w \cdot a, \]
		(i.e., append the symbol $a$ to the end of the word $w$).
		
		\item Let $a,b \in [n]$. Define the \emph{switching operator} 
		$\Theta_{b,a}:M^{n}\rightarrow 
		M^{n}$ as follows: given a word $w=w_1\cdot w_2\cdot \ldots \cdot w_n \in W^n$, 
		define
		\[\Theta_{b,a}(w) := 
		\sum_{\substack{1 \leq i \leq n \\ w_{i} = b}} 
		w_{1}\cdot \ldots \cdot w_{i-1} 
		\cdot a \cdot w_{i+1}\cdot \ldots \cdot w_{n},\]
		(i.e., for each occurrence of $b$ in the word $w$, replace that 
		occurrence with $a$ and sum the resulting words). 
	\end{enumerate}
\end{defn}

\begin{rem}
	There is some ambiguity in the definitions just given -- since $1\in [n]$ 
	for all $n$, for example, strictly speaking we should define $\sh_1$ 
	separately for each $n$.
	However, this would burden us with even more notation, and it should always 
	be clear from the context which domain and codomain we are considering.
\end{rem}

The adding operator on words defined above is an analogue of the process of 
adding boxes to Young diagrams. The process of adding boxes to Young diagrams 
may be seen in Figure~\ref{lattice}.
We now set up some notation to describe this more precisely. 

\begin{defn}
	Given an $n$-tuple $\lambda=(\lambda_1,\ldots,\lambda_r)$ of non-negative 
	integers summing to $n$ and an element $a \in [n+1]$, we form an 
	$(n+1)$-tuple denoted $\lambda+e_a$ by first adding a zero to the end of 
	$\lambda$ and then adding $1$ to this $(n+1)$-tuple in position $a$. 
	Then $\lambda+e_a$ is an $(n+1)$-tuple of non-negative integers summing to 
	$n+1$.	E.g. $(2,1,0) + e_{3} = (2,1,0,0) + (0,0,1,0) = (2,1,1,0)$.
\end{defn}

With this notation in hand, we have the following easy lemma:
\begin{lemma}
		Given $a \in [n+1]$ and an $n$-tuple $\lambda$ of non-negative integers 
		summing to $n$, we have
		$$
		\sh_{a}: M^{\lambda} \to M^{\lambda+e_a},
		$$
		i.e., the restriction of $\sh_{a}$ to $M^\lambda$ has image in 
		$M^{\lambda+e_a}$. 	
		
		Given $a,b \in [n]$ and $n$-tuples $\lambda,\mu$ of non-negative integers summing to $n$ with $\lambda+e_{a} = \mu +e_{b}$, we have
		$$ \Theta_{b,a}:M^{\lambda} \rightarrow M^{\mu},$$
		i.e., the restriction of $\Theta_{b,a}$ to $M^\lambda$ has image in 
		$M^{\mu}$.
\end{lemma}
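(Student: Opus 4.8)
The plan is to observe that everything follows by reducing to the action on individual words and bookkeeping the evaluation vector, so this is genuinely routine. By definition $M^\lambda$ is the linear span of the words $w\in W^n$ with $\eval(w)=\lambda$, and both $\sh_a$ and $\Theta_{b,a}$ are linear maps; hence it suffices to check that each such spanning word is sent into the claimed target module, and then invoke linearity.

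First I would handle the adding operator. Given $w\in W^n$ with $\eval(w)=\lambda$, the word $\sh_a(w)=w\cdot a$ has length $n+1$ and differs from $w$ only by one extra copy of the symbol $a$. Thus $\eval_a(w\cdot a)=\eval_a(w)+1$ and $\eval_i(w\cdot a)=\eval_i(w)$ for every $i\neq a$. Comparing this with the definition of $\lambda+e_a$ (pad $\lambda$ with a trailing zero, then increment coordinate $a$) gives $\eval(w\cdot a)=\lambda+e_a$, so $\sh_a(w)\in M^{\lambda+e_a}$. This argument also covers the case $a=n+1$, where the appended symbol simply occupies the newly created coordinate.

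Next I would treat the switching operator. Given $w\in W^n$ with $\eval(w)=\lambda$, the element $\Theta_{b,a}(w)$ is, by Definition~\ref{lin}, the sum over the positions $i$ with $w_i=b$ of the word $w^{(i)}$ obtained from $w$ by replacing that single occurrence of $b$ with $a$. If $b$ does not appear in $w$ (in particular whenever $\lambda_b=0$) the sum is empty, so $\Theta_{b,a}(w)=0\in M^\mu$ trivially. Otherwise each $w^{(i)}$ has exactly one fewer $b$ and one more $a$ than $w$, so $\eval(w^{(i)})=\lambda-e_b+e_a$ (when $a=b$ nothing changes and $\lambda-e_b+e_a=\lambda$). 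The hypothesis $\lambda+e_a=\mu+e_b$ rearranges to $\mu=\lambda+e_a-e_b$, whence $\eval(w^{(i)})=\mu$ and therefore $\Theta_{b,a}(w)\in M^\mu$.

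Finally, since in both cases every spanning word of $M^\lambda$ is mapped into the target module, linearity of the operator gives that the entire restriction has image in that module, which completes the proof. The only points requiring any care are purely notational: matching the $e_a$ convention (including the trailing-zero padding of $\lambda$) to the effect on $\eval$, and the degenerate cases $a=n+1$ and $b$ absent from $w$. There is no substantive obstacle here, which is exactly why the authors flag it as an easy lemma.
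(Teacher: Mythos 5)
Your proof is correct and is exactly the routine evaluation-counting argument the paper has in mind (the lemma is stated there without proof as an "easy lemma"): check on spanning words, note that $\sh_a$ adds one to $\eval_a$ and that each term of $\Theta_{b,a}$ replaces one $b$ by one $a$, so the hypothesis $\lambda+e_a=\mu+e_b$ gives evaluation $\mu$, then conclude by linearity. Your handling of the degenerate cases ($a=n+1$, $b$ absent from $w$, $a=b$) is also fine.
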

Our next result 
establishes the crucial equation upon which all the subsequent results in this 
section rely. It relates the shuffle on $n$ cards to that on $n+1$ cards, and 
allows us to lift the eigenvalues. 

\begin{thm}
	\label{master}
	Given $n \in \mathbb{N}$, we have
	\begin{equation}
	\label{mastereq}
	Q_{n+1} \circ \sh_{a} - \sh_{a} \circ Q_{n} = \frac{1}{n+1}\sh_{a} + 
	\frac{1}{n+1}\sum_{1 \leq b \leq n} \sh_{b}\circ\Theta_{b,a} \,.
	\end{equation}
\end{thm}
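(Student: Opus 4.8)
The plan is to verify equation~\eqref{mastereq} by evaluating both sides on an arbitrary basis word $w = w_1\cdot w_2\cdot\ldots\cdot w_n \in W^n$, since both sides are linear operators $M^n \to M^{n+1}$. Recall from Definition~\ref{RLop} that $Q_m = \sum_{1\le i\le j\le m} \frac{1}{j}(i\,j)$, where the transpositions act by place permutation. The key organising observation is equation~\eqref{eqn:lrdifference}, which tells us that $Q_{n+1} = Q_n + \frac{1}{n+1}\sum_{1\le i\le n+1}(i\,\,n+1)$; so on the left-hand side the bulk of $Q_{n+1}$ will cancel against $\sh_a\circ Q_n$ once we check that $Q_n$ (viewed inside $S_{n+1}$ as permutations fixing position $n+1$) commutes with appending a symbol. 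Concretely, for $1\le i\le j\le n$ the place-permutation $(i\,j)$ acts only on the first $n$ coordinates, hence $(i\,j)(w\cdot a) = ((i\,j)w)\cdot a$, which is exactly $\sh_a((i\,j)w)$. Summing with weights $1/j$ gives $Q_{n+1}^{(\le n)}\circ\sh_a = \sh_a\circ Q_n$, where $Q_{n+1}^{(\le n)}$ denotes the part of $Q_{n+1}$ supported on transpositions $(i\,j)$ with $j\le n$.

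First I would therefore reduce the left-hand side to the ``new'' terms: $Q_{n+1}\circ\sh_a - \sh_a\circ Q_n = \frac{1}{n+1}\sum_{1\le i\le n+1}(i\,\,n+1)\circ\sh_a$. The term $i=n+1$ contributes $\frac{1}{n+1}(n+1\,\,n+1)(w\cdot a) = \frac{1}{n+1}\,w\cdot a = \frac{1}{n+1}\sh_a(w)$, which matches the first term on the right-hand side. It then remains to show
\[
\sum_{1\le i\le n}(i\,\,n+1)(w\cdot a) \;=\; \sum_{1\le b\le n}(\sh_b\circ\Theta_{b,a})(w)\,.
\]
For the left-hand side, the place-transposition $(i\,\,n+1)$ swaps the symbol in position $i$ with the symbol in position $n+1$ (which is $a$); so $(i\,\,n+1)(w\cdot a) = w_1\cdot\ldots\cdot w_{i-1}\cdot a\cdot w_{i+1}\cdot\ldots\cdot w_n\cdot w_i$. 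For the right-hand side, unwinding Definition~\ref{lin}: $\Theta_{b,a}(w)$ replaces each occurrence of $b$ in $w$ (one at a time, summed) by $a$, and then $\sh_b$ appends $b$ to the resulting word. So $(\sh_b\circ\Theta_{b,a})(w) = \sum_{i:\,w_i=b} w_1\cdot\ldots\cdot a\cdot\ldots\cdot w_n\cdot b$ (the $a$ in position $i$). Summing over $b\in[n]$ amounts to summing over all positions $i\in[n]$ with $b=w_i$, giving exactly $\sum_{1\le i\le n} w_1\cdot\ldots\cdot a\cdot\ldots\cdot w_n\cdot w_i$, which is the left-hand side. This establishes the identity.

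The main obstacle is purely bookkeeping: one must be careful that the $S_n$-action used throughout is the \emph{place-permutation} action (as emphasised in the excerpt, not the action induced from $[n]$), and that the switching operator $\Theta_{b,a}$ acts symbol-by-symbol with a sum over occurrences rather than replacing all copies simultaneously — this is precisely what makes the double sum over $b$ reproduce the single sum over positions $i$. A clean way to present the argument is to first record the commutation lemma $(i\,j)\circ\sh_a = \sh_a\circ(i\,j)$ for $j\le n$, then reduce to \eqref{eqn:lrdifference}, peel off the $i=n+1$ term, and finally match the remaining $n$ terms against $\sum_b \sh_b\circ\Theta_{b,a}$ by the position/symbol reindexing above. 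No genuine difficulty arises beyond ensuring these conventions are applied consistently.
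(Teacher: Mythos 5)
Your proposal is correct and follows essentially the same route as the paper: evaluate both sides on a basis word, cancel the transpositions $(i\,j)$ with $j\le n$ against $\sh_a\circ Q_n$ (the appended symbol never moves), peel off the $i=n+1$ term to get $\frac{1}{n+1}\sh_a$, and identify $\sum_{i\le n}(i\ \,n+1)$ acting on $w\cdot a$ with $\sum_b \sh_b\circ\Theta_{b,a}$ via the position/symbol reindexing. The only cosmetic difference is that you invoke \eqref{eqn:lrdifference} up front, whereas the paper splits the sum defining $Q_{n+1}$ directly; the content is identical.
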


\begin{proof}
	It suffices to prove the result on words.
	Let $w=w_1\cdot \ldots \cdot w_n$ be a word of length $n$ and let $a \in [n+1]$.
	Consider the two terms on the left hand side applied to $w$:	
	\begin{eqnarray}
	\label{LH1}
	(Q_{n+1} \circ \sh_{a})(w) = \sum_{1 \leq i \leq j \leq n+1}\frac{1}{j} 
	(i\,j) (w \cdot a) = 
	\frac{1}{n+1}\sum_{\substack{j=n+1 \\ 
			1 \leq i \leq n+1}} (i\,j) (w \cdot a) + 
	\sum_{1 \leq i \leq j \leq n}\frac{1}{j} (i\,j) (w \cdot a)
	\end{eqnarray}
	
	\begin{eqnarray}
	\label{LH2}
	(\sh_{a} \circ Q_{n})(w) = \left(\sum_{1 \leq i \leq j \leq n
	}\frac{1}{j} (i\,j)(w) \right) \cdot a.
	\end{eqnarray}
	
	The second summation in (\ref{LH1}) cancels with (\ref{LH2}) 
	because the adjoined $a$ is in the $(n+1)$-th place, therefore it never 
	moves and may be brought outside.
	This leaves us with the following:
	\begin{eqnarray}
	\label{leftover}
	(Q_{n+1} \circ \sh_{a} - \sh_{a} \circ Q_{n})(w) 
	= \frac{1}{n+1}	\sum_{1 \leq i \leq n+1} (i \hspace{0.2cm} n+1)  (w\cdot a).
	\end{eqnarray}

	If $i=n+1$ we move nothing, giving the term $w\cdot a = \sh_{a}(w)$. 
	Otherwise we apply the transposition 
	$(i \hspace{0.2cm} n+1)$ to $w \cdot a$.
	This has the same effect as replacing the $i^{\rm th}$ symbol $w_i$ in $w$ 
	with $a$ and then appending $w_i$ on the end of the new word. 
	Since we do this for all symbols in $w$, the net effect is the same as 
	$\sum_{1\leq b \leq n} \sh_b \circ \Theta_{b,a}$ applied to $w$. 
	(The operator $\Theta_{b,a}$ systematically 
	finds all occurrences of the letter $b$ in $w$ and replaces with an $a$, 
	and then $\sh_b$ puts the $b$ back on the end. Since $w \in W^n$,
	all possibilities are exhausted by letting $b$ range over $1\leq b \leq 
	n$.)  
	This completes the proof. 
\end{proof}

In terms of shuffling cards, we can interpret (\ref{mastereq}) as taking into 
account the difference between shuffling a deck and then adding a card versus 
adding a card and then shuffling.
If we can understand how the operators $\sh_{a}$ and $\Theta_{b,a}$ behave, 
this then inductively tells us how the shuffle on 
$n+1$ cards behaves using information about the shuffle on $n$ cards. 
Equation (\ref{mastereq}) is our analogue of the similar equation found in 
\cite[Theorem 38]{dieker2018spectral}. We now record a key property of the 
switching operators $\Theta_{a,b}$.

\begin{lemma}[See Section 
	2.9 of \cite{sagan2013symmetric}]\label{thetamorphism}
	The maps $\Theta_{b,a}$ are $\mathfrak{S}_{n}$-module morphisms. 
\end{lemma}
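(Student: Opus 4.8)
The statement to prove is Lemma~\ref{thetamorphism}: the switching operators $\Theta_{b,a}$ are $\mathfrak{S}_n$-module morphisms.

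The plan is to verify directly that $\Theta_{b,a}$ commutes with the $S_n$-action by place permutations on $M^n$, since this action is what gives $M^n$ its $\mathfrak{S}_n$-module structure. It suffices to check the identity $\sigma(\Theta_{b,a}(w)) = \Theta_{b,a}(\sigma(w))$ for an arbitrary generator $\sigma \in S_n$ and an arbitrary basis word $w = w_1\cdot w_2\cdot \ldots \cdot w_n \in W^n$, since both sides are linear and the $w$ span $M^n$; the result for the whole group algebra then follows because $\mathfrak{S}_n$-module morphisms are exactly the $\mathbb C$-linear maps intertwining the $S_n$-action.

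First I would unwind the definitions on both sides. On the left, $\Theta_{b,a}(w)$ is the sum over positions $i$ with $w_i = b$ of the word $w$ with its $i$-th symbol replaced by $a$; applying $\sigma$ then permutes the places of each of these words, so position $i$ becomes position $\sigma(i)$. On the right, $\sigma(w)$ has symbol $w_{\sigma^{-1}(k)}$ in place $k$, so the positions $k$ carrying symbol $b$ in $\sigma(w)$ are exactly those with $\sigma^{-1}(k)$ equal to some position originally carrying $b$ in $w$ — that is, $k = \sigma(i)$ with $w_i = b$. For each such $k$, $\Theta_{b,a}$ replaces the symbol in place $k$ by $a$. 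The key step is then simply to observe that the bijection $i \mapsto \sigma(i)$ matches up the summands on the two sides term by term: replacing the symbol in place $i$ of $w$ and then permuting places by $\sigma$ gives the same word as first permuting places and then replacing the symbol in place $\sigma(i)$. This is a routine bookkeeping check with indices.

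There is no serious obstacle here; the only thing requiring a little care is keeping the place-permutation convention straight (the action sends place $i$ to place $\sigma(i)$, equivalently inserts $w_{\sigma^{-1}(k)}$ into place $k$), and making sure the index substitution in the sum is done consistently on both sides. Once the term-by-term correspondence is exhibited, the lemma follows immediately. Alternatively — and perhaps more cleanly — one can note that $\Theta_{b,a}$ is, up to the $\sh$ bookkeeping, precisely the kind of symmetrized operator built from the regular-representation action that is treated in Section~2.9 of \cite{sagan2013symmetric}, so one may also simply cite that reference; but the direct index computation above is short enough to include in full.
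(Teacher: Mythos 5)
Your proof is correct and is essentially the paper's own argument: the paper also just observes that, because $S_n$ acts by place permutations, replacing an occurrence of $b$ by $a$ and then permuting places is the same as permuting first and then replacing $b$ in its new position, which is exactly the term-by-term correspondence $i\mapsto\sigma(i)$ you spell out. Your version merely makes the index bookkeeping explicit where the paper declares it clear from the definitions.
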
	

\begin{proof}
	This is clear from the definitions: since $S_n$ is acting by place 
	permutations, it amounts to the same thing to replace an occurrence of the 
	symbol $b$ with a symbol $a$ and then permute the word as to first permute 
	the word and then replace the same symbol $b$ in its new position with an 
	$a$. 
\end{proof}

The above result is used to prove our next important lemma. 
Recall that given a partition $\lambda$ we may add boxes to it in certain 
places to form a new partition. 
By our blurring of the distinction between partitions of $n$ and $n$-tuples, 
if we add a box on row $i$ the new partition formed is $\lambda+e_{i}$.
Our next lemma shows how our switching operators behave when 
restricted to Specht modules. 

\begin{lemma}[Lemma 44 of \cite{dieker2018spectral}]
	\label{restrict}
	Let $\lambda,\alpha \vdash n$ be such that $\lambda + e_a=\alpha+e_b$ for 
	some $a,b\in[n]$. 
	Then $\Theta_{b,a}$ is non-zero on $S^{\lambda}$ if and only if $\lambda$ 
	dominates $\alpha$. 
	In particular, if $a<b$ then $\Theta_{b,a}(S^{\lambda}) =0$.
\end{lemma}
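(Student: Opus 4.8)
The plan is to prove the two implications of the equivalence separately. The vanishing statement $\lambda\not\trianglerighteq\alpha\Rightarrow\Theta_{b,a}|_{S^{\lambda}}=0$ is a soft consequence of Schur's lemma together with Young's rule, whereas the converse $\lambda\trianglerighteq\alpha\Rightarrow\Theta_{b,a}|_{S^{\lambda}}\neq0$ will require exhibiting an explicit element of $S^{\lambda}$ that is not killed.

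First I would unpack the hypothesis combinatorially. From $\lambda+e_a=\alpha+e_b$ we get $\alpha=\lambda-e_b+e_a$, so $\alpha$ is obtained from $\lambda$ by moving one box from row $b$ to row $a$. Comparing partial sums of the two tuples shows immediately that $\lambda\trianglerighteq\alpha$ holds exactly when $a\geq b$ (and $a=b$ forces $\alpha=\lambda$); in particular $a<b$ makes $\alpha$ strictly dominate $\lambda$, so $\lambda\not\trianglerighteq\alpha$. Hence the ``in particular'' clause will follow from the main equivalence.

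For the vanishing direction, suppose $\lambda\not\trianglerighteq\alpha$. By Lemma~\ref{thetamorphism}, $\Theta_{b,a}$ is a morphism of $\mathfrak{S}_{n}$-modules, and since $S^{\lambda}\subseteq M^{\lambda}$ its restriction $\Theta_{b,a}|_{S^{\lambda}}\colon S^{\lambda}\to M^{\alpha}$ is a morphism out of the simple module $S^{\lambda}$. If it were non-zero, Schur's lemma would make it injective, so $M^{\alpha}$ would contain a submodule isomorphic to $S^{\lambda}$; but Young's rule (Lemma~\ref{lem:youngrule}) says every irreducible constituent $S^{\mu}$ of $M^{\alpha}$ has $\mu\trianglerighteq\alpha$, which would force $\lambda\trianglerighteq\alpha$, a contradiction. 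So $\Theta_{b,a}(S^{\lambda})=0$, and taking the contrapositive when $a<b$ yields the stated special case.

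For the non-vanishing direction, assume $\lambda\trianglerighteq\alpha$, so $a\geq b$. If $a=b$ then $\Theta_{a,a}$ acts on $M^{\lambda}$ as multiplication by $\lambda_a\geq1$, so assume $a>b$; since $\alpha$ is a genuine partition, the inequalities $\alpha_b=\lambda_b-1\geq\alpha_{b+1}$ and $\alpha_a=\lambda_a+1\leq\alpha_{a-1}$ force $\lambda_b\geq\lambda_a+1$. I would then evaluate $\Theta_{b,a}$ on the polytabloid $s(\best)$. Writing $w_0:=w(\best)$ (the sorted word with $\lambda_i$ copies of the symbol $i$) and $A:=\sum_{\sigma\in\textnormal{ColStab}(\best)}\textnormal{sign}(\sigma)\,\sigma\in\mathfrak{S}_{n}$, we have $s(\best)=A\cdot w_0\in S^{\lambda}$, and since $\Theta_{b,a}$ is a module map, $\Theta_{b,a}(s(\best))=A\cdot\Theta_{b,a}(w_0)=\sum_{j=1}^{\lambda_b}A\cdot w_0^{(j)}$, where $w_0^{(j)}$ is $w_0$ with the symbol $b$ sitting at the position of box $(b,j)$ replaced by $a$. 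Now $A\cdot w=0$ as soon as some column of $\best$ meets a repeated symbol of $w$ (the relevant transposition lies in $\textnormal{ColStab}(\best)$ and fixes $w$ with sign $-1$), and otherwise $A\cdot w\neq0$; this kills $A\cdot w_0^{(j)}$ for $j\leq\lambda_a$ (column $j$ already carries an $a$ in row $a$), while for $\lambda_a<j\leq\lambda_b$ column $j$ contains no other $a$ and every column of $w_0^{(j)}$ stays repetition-free, so $A\cdot w_0^{(j)}\neq0$. The hard part will be ruling out cancellation among these surviving summands: for $j>\lambda_a$ the displaced $a$ lies in column $j$, so every word in the support of $A\cdot w_0^{(j)}$ has its $\lambda_a+1$ copies of $a$ spread one per column over the set $\{1,\dots,\lambda_a\}\cup\{j\}$, and distinct values of $j$ therefore give disjoint supports. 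Since $\lambda_b\geq\lambda_a+1$ there is at least one admissible $j$, whence $\Theta_{b,a}(s(\best))\neq0$. I expect this no-cancellation bookkeeping to be the only genuine obstacle, since the representation theory tells us $S^{\lambda}$ occurs in $M^{\alpha}$ but not that $\Theta_{b,a}$ is a map realizing it, so a surviving term must be produced explicitly.
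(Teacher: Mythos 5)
Your proposal is correct, and for the direction that the paper actually establishes it follows the same route: restrict the module morphism $\Theta_{b,a}$ (Lemma~\ref{thetamorphism}) to the simple module $S^{\lambda}$, apply Schur's lemma, and invoke Young's rule (Lemma~\ref{lem:youngrule}) to conclude that a non-zero restriction forces $\lambda\trianglerighteq\alpha$; together with the observation that $\lambda+e_a=\alpha+e_b$ means moving a box from row $b$ to row $a$, this gives the vanishing statement for $a<b$, which is the only part of the lemma used later (in restricting the sum in equation~\eqref{eqn:restricted}). Where you genuinely diverge is the converse. The paper's proof simply asserts that the Schur/Young's-rule argument ``gives the first assertion'', but as you correctly point out, the fact that $S^{\lambda}$ occurs in $M^{\alpha}$ does not by itself show that the particular morphism $\Theta_{b,a}$ realises a non-zero copy; the paper is leaning on the cited Lemma~44 of \cite{dieker2018spectral} for this. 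Your explicit computation with the polytabloid $s(\best)$ fills that gap: the identification $s(\best)=A\cdot w(\best)$, the vanishing of $A\cdot w$ precisely when a column of $\best$ meets a repeated symbol, the count showing the surviving indices are $\lambda_a<j\le\lambda_b$ (non-empty since $\alpha$ being a partition forces $\lambda_b\ge\lambda_a+2$ when $a>b$), and the disjoint-support argument via the columns occupied by the symbol $a$ are all sound, and they buy a self-contained proof of the ``if'' direction that the paper does not supply. The only quibble is the case $a=b$: you assert $\lambda_a\ge 1$, but the hypothesis $\lambda+e_a=\alpha+e_a$ allows $a=l(\lambda)+1$, where $\Theta_{a,a}$ kills all of $M^{\lambda}$ even though $\lambda\trianglerighteq\alpha=\lambda$; this degenerate failure is a defect of the statement as written (shared with the paper and its source) rather than of your argument, and it does not affect the $a<b$ vanishing statement that the rest of the paper relies on.
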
	

\begin{proof}
	Since $S^\lambda$ is simple and $\Theta_{b,a}$ is a module homomorphism, 
	the image  $\Theta_{b,a}(S^\lambda)$ is $0$ or isomorphic to $S^\lambda$, 
	by Schur's lemma.
	But $\Theta_{b,a}(S^\lambda)$ lies in $M^\alpha$ because of the 
	relationship $\lambda+e_a = \alpha+e_b$, and $M^\alpha$ has a submodule 
	isomorphic to $S^\lambda$ if and only if $\lambda$ dominates $\alpha$ (Lemma~\ref{lem:youngrule}). This 
	gives the first assertion of the lemma.
	
	To finish, note that in terms of diagrams the fact that $\lambda+e_a = 
	\alpha+e_b$ corresponds to the fact that we can get from the diagram for 
	$\lambda$ to that for $\alpha$ by moving a box from row $b$ to row $a$.
	Hence, under the given hypothesis, we have that $\lambda$ dominates 
	$\alpha$ if and only if $a\geq b$.
\end{proof}

The preceding result shows that when we restrict equation 
\eqref{mastereq} to a Specht module $S^\lambda$ we can change the index of the 
summation in the final term on the right hand side, as follows.

\begin{corollary}[Similar to Corollary 45 of \cite{dieker2018spectral}]
	\begin{align}
	(Q_{n+1} \circ \sh_{a} - \sh_{a} \circ Q_{n})|_{S^{\lambda}} = 
	\frac{1}{n+1}\sh_{a}|_{S^{\lambda}} + 
	\frac{1}{n+1}\sum_{1 \leq b \leq a} \sh_{b}\circ\Theta_{b,a}|_{S^{\lambda}} \label{eqn:restricted}
	\end{align}
\end{corollary}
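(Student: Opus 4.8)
The plan is to derive \eqref{eqn:restricted} as an immediate consequence of the master equation \eqref{mastereq} of Theorem~\ref{master} together with Lemma~\ref{restrict}. All of the operators $Q_{n+1}$, $\sh_{a}$, $Q_{n}$ and $\sh_{b}\circ\Theta_{b,a}$ appearing in \eqref{mastereq} are defined on the whole of $M^{n}$, so \eqref{mastereq} is in particular a valid identity of linear maps after we feed in vectors from the subspace $S^{\lambda}\subseteq M^{\lambda}\subseteq M^{n}$; restricting both sides to $S^{\lambda}$ therefore gives the same identity with every operator replaced by its restriction to $S^{\lambda}$, and in particular the first term on the right becomes $\tfrac{1}{n+1}\sh_{a}|_{S^{\lambda}}$ unchanged.

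The only remaining point is to shrink the index set of the sum from $1\le b\le n$ down to $1\le b\le a$. For a fixed $b$, let $\alpha$ denote the $n$-tuple determined by $\lambda+e_{a}=\alpha+e_{b}$, i.e.\ the diagram obtained from $\lambda$ by moving one box from row $b$ to row $a$. When $b>a$ this move sends a box to a higher row, so $\lambda$ does not dominate $\alpha$, and Lemma~\ref{restrict} then gives $\Theta_{b,a}(S^{\lambda})=0$; hence $\sh_{b}\circ\Theta_{b,a}|_{S^{\lambda}}=0$ and every term with $b>a$ disappears from the sum, leaving precisely the right-hand side of \eqref{eqn:restricted}. (Terms for which $\lambda$ has no box in row $b$ are harmless for the even simpler reason that no word of evaluation $\lambda$ contains the symbol $b$, so $\Theta_{b,a}$ vanishes on all of $M^{\lambda}$.)

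I do not expect any genuine obstacle here: the corollary is pure bookkeeping once Theorem~\ref{master} and Lemma~\ref{restrict} are in hand. Its value is simply to record that, after restriction to $S^{\lambda}$, only the switching operators $\Theta_{b,a}$ with $b\le a$ survive -- which is exactly the feature that will make the subsequent inductive ``lifting'' of eigenvectors and eigenvalues tractable.
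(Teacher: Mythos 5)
Your proposal is correct and is exactly the paper's (implicit) argument: the corollary is obtained by restricting the master equation \eqref{mastereq} of Theorem~\ref{master} to $S^{\lambda}$ and invoking Lemma~\ref{restrict} to kill the terms $\sh_{b}\circ\Theta_{b,a}|_{S^{\lambda}}$ with $b>a$. Your extra parenthetical about rows $b$ not present in $\lambda$ is a harmless (and slightly more careful) addition to the same bookkeeping.
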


Having restricted equation (\ref{mastereq}) to the Specht module 
$S^{\lambda}$, we now analyse the image in the module $M^{\lambda +e_{a}}$
(note that it is clear from the left hand side of \eqref{mastereq} that we do 
land in $M^{\lambda+e_a}$). 

\begin{lemma}[Lemma 41 of \cite{dieker2018spectral}]
	\label{lives}
	Suppose $\lambda\vdash n$ and $\lambda+e_a\vdash n+1$.
	Then the subspace $\sh_{a}(S^{\lambda})$ is contained in an 
	$\mathfrak{S}_{n+1}$-submodule of $M^{\lambda+e_{a}}$ that is isomorphic to 
	$\oplus_{\mu} S^{\mu}$, where the sum ranges over the partitions $\mu$ obtained 
	from $\lambda$ by adding a box in row $i$ for $i\leq a$.
\end{lemma}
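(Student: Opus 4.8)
The plan is to combine the branching rule for Specht modules with the Young rule (Lemma~\ref{lem:youngrule}) to pin down exactly which Specht summands of $M^{\lambda+e_a}$ can contain the image $\sh_a(S^\lambda)$. First I would recall the branching rule: as an $\mathfrak{S}_n$-module (i.e.\ upon restriction), $S^\mu\big\downarrow_{\mathfrak{S}_n} \cong \bigoplus S^{\mu^-}$, where $\mu^-$ ranges over partitions of $n$ obtained from $\mu$ by removing a box. Dually, if we think of an embedding of $\mathfrak{S}_n$-modules into an $\mathfrak{S}_{n+1}$-module, a copy of $S^\lambda$ sitting inside an $\mathfrak{S}_{n+1}$-module $N$ forces $N$ to contain (after inducing up, or equivalently by Frobenius reciprocity) a Specht module $S^\mu$ with $\mu$ obtained from $\lambda$ by adding a box. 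So the candidate summands of $M^{\lambda+e_a}$ that could house $\sh_a(S^\lambda)$ are among the $S^\mu$ with $\mu = \lambda + e_i$ for various $i$.

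Next I would cut this candidate list down using the structure of $M^{\lambda+e_a}$ itself. By Lemma~\ref{lem:youngrule}, $M^{\lambda+e_a} \cong \bigoplus_{\nu \,\trianglerighteq\, \lambda+e_a} K_{\lambda+e_a,\nu}\,S^\nu$, so only partitions $\nu$ dominating $\lambda+e_a$ appear. Intersecting the two constraints: we need $\nu = \lambda+e_i$ for some $i$ \emph{and} $\nu \trianglerighteq \lambda+e_a$. Adding a box in row $i$ versus row $a$: one checks directly from the definition of dominance order that $\lambda+e_i \trianglerighteq \lambda+e_a$ precisely when $i \le a$ (moving a box from row $a$ up to row $i$, i.e.\ $i<a$, moves boxes up and to the... one must be careful with the convention, but $\lambda+e_i$ with $i\le a$ is the one that dominates). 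This identifies the surviving summands as exactly the $S^\mu$ with $\mu = \lambda+e_i$, $i\le a$, matching the statement. Finally, to see that $\sh_a(S^\lambda)$ actually lands inside the submodule which is the sum of these particular isotypic components (and not merely that these are the only \emph{possible} places), I would argue that $\sh_a(S^\lambda)$ generates an $\mathfrak{S}_{n+1}$-submodule $N'$ of $M^{\lambda+e_a}$, and every Specht constituent $S^\mu$ of $N'$ must by the branching argument satisfy: $S^\lambda$ appears in $S^\mu\big\downarrow$, hence $\mu = \lambda+e_i$ for some $i$; combined with $\mu \trianglerighteq \lambda+e_a$ this gives $i\le a$; so $N' \subseteq \bigoplus_{i\le a} (\text{$S^{\lambda+e_i}$-isotypic component of } M^{\lambda+e_a})$, which is the desired submodule.

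The main obstacle is the last step — verifying that $\sh_a(S^\lambda)$ really does lie in the sum of the relevant isotypic components rather than just ruling out the wrong ones at the level of which irreducibles can appear. The subtlety is that $\sh_a$ is only a map of \emph{vector spaces} (it is not an $\mathfrak{S}_n$-module map in the naive sense, because of the place-permutation action and the appended symbol), so one cannot immediately say $\sh_a(S^\lambda)$ is a submodule. The fix is to pass to the $\mathfrak{S}_{n+1}$-submodule \emph{generated by} $\sh_a(S^\lambda)$ and analyse that; one then needs to know $\sh_a$ is injective on $S^\lambda$ (so that $S^\lambda$ genuinely occurs inside the generated submodule upon restriction) — this follows since $\sh_a(w) = w\cdot a$ simply appends a symbol, an injective operation on words, hence injective on $M^n$ and in particular on $S^\lambda$. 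With injectivity in hand, restricting the generated submodule $N'$ back to $\mathfrak{S}_n$ must contain a copy of $S^\lambda$, and the branching/dominance bookkeeping above closes the argument. I expect the remaining verifications (the precise direction of the dominance inequality for $\lambda+e_i$ vs.\ $\lambda+e_a$, and the compatibility of the $\mathfrak{S}_n$- and $\mathfrak{S}_{n+1}$-actions via $\sh_a$) to be routine, following the template of Lemma 41 in \cite{dieker2018spectral}.
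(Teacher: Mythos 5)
Your argument is correct, but it reaches the conclusion by a genuinely different route from the paper. The paper factors the adding operator through a fresh symbol $b=l(\lambda)+1$, writing $\sh_a = \Theta_{b,a}\circ\sh_b$ on words with evaluation $\lambda$, identifies the $\mathfrak{S}_{n+1}$-module generated by $\sh_b(S^\lambda)$ with the induced module $\text{Ind}^{\mathfrak{S}_{n+1}}_{\mathfrak{S}_n\times\mathfrak{S}_1}(S^\lambda\otimes S^1)\cong\bigoplus_{\mu\supset\lambda}S^\mu$, and then pushes this through the switching operator, using Lemma~\ref{restrict} (Schur's lemma for $\Theta_{b,a}$) together with the evaluation constraint $\Theta_{b,a}(M^{\lambda+e_b})\subseteq M^{\lambda+e_a}$ to discard the summands with $i>a$. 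You instead work directly with $\sh_a(S^\lambda)$ inside $M^{\lambda+e_a}$: the generated submodule is a quotient of $\text{Ind}_{\mathfrak{S}_n}^{\mathfrak{S}_{n+1}}S^\lambda$ (Frobenius reciprocity plus branching limits its constituents to $\lambda$ plus a box), while the Young rule (Lemma~\ref{lem:youngrule}) forces every constituent of $M^{\lambda+e_a}$ to dominate $\lambda+e_a$, and $\lambda+e_i\trianglerighteq\lambda+e_a$ precisely when $i\le a$ — your dominance check is right. Two small points. First, your parenthetical claim that $\sh_a$ is ``not an $\mathfrak{S}_n$-module map'' is misleading: with $\mathfrak{S}_n\subset\mathfrak{S}_{n+1}$ realised as the stabiliser of position $n+1$, the place-permutation action fixes the appended symbol, so $\sh_a$ \emph{is} $\mathfrak{S}_n$-equivariant (this is exactly the observation in the proof of Lemma~\ref{modulemorph}), and your Frobenius-reciprocity step silently requires it — injectivity alone would not make $\sh_a(S^\lambda)$ an $\mathfrak{S}_n$-submodule; you do flag this compatibility at the end, and it is indeed routine. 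Second, your argument yields only the containment of constituents among $\{S^{\lambda+e_i}:i\le a\}$ (which, after completing to the full multiplicity-free sum inside $M^{\lambda+e_a}$ using positivity of the relevant Kostka numbers, is all the lemma asserts); by contrast the paper's explicit computation of $\Theta_{b,a}(N)$ shows the generated submodule is isomorphic to the \emph{entire} sum $\bigoplus_{i\le a}S^{\lambda+e_i}$, and it is this stronger fact that is reused in Corollary~\ref{nonzero} to prove the lifting operators are nonzero. So your route is cleaner and avoids the switching-operator machinery for this lemma, but the paper's detour through $\Theta_{b,a}$ buys extra information needed later.
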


\begin{proof}
	Let $w$ be a word of length $n$, so that $\sh_{a}(w) = w \cdot a$.
	If the symbol $b$ does not occur in $w$ then
	\[\sh_{a}(w) = \Theta_{b,a}(\sh_{b}(w)).\]
	Let $b =l(\lambda) +1$, so $b$ does not appear in any $w \in M^{\lambda}$, and 
	consider the $\mathfrak{S}_{n+1}$-submodule $N$ of 
	$M^{\lambda + e_{b}}$ generated by the elements $x\cdot b$ with $x  \in S^{\lambda}$,
	\[N = \langle x\cdot b : x \in S^{\lambda} \rangle.\]
	\paragraph{}
	The submodule $N$ is isomorphic to 
	$\text{Ind}^{\mathfrak{S}_{n+1}}_{\mathfrak{S}_{n} \times \mathfrak{S}_{1}} 
	(S^{\lambda} \otimes S^{1})$ (this is essentially the definition of how to induce), and using the branching 
	rules on $S_{n}$ this decomposes as a multiplicity free 
	direct sum of Specht modules $S^{\mu}$, where $\mu \vdash n+1$ and $\lambda 
	\subset \mu$ \cite[Theorem 2.8.3]{sagan2013symmetric}.
	Using the observation at the start of the proof, we obtain
	\[\sh_{a}(S^{\lambda}) = \Theta_{b,a}(\sh_{b}(S^{\lambda})) \subseteq 
	\Theta_{b,a}(\langle \sh_{b}(S^{\lambda}) \rangle) = \Theta_{b,a}(N) 
	\cong \bigoplus_{\mu 
		\supseteq \lambda} \Theta_{b,a}(S^{\mu}).\]
	Now note that $\Theta_{b,a}$ sends any word with evaluation $\lambda+e_b$ to a word with evaluation $\lambda+e_a$, and hence
	$\Theta_{b,a}(M^{\lambda +e_{b}}) \subseteq M^{\lambda +e_{a}}$. 
	It follows that all nonzero summands $S^\mu$ appearing on the right hand side occur for $\mu\vdash n+1$ dominating $\lambda+e_a$, and then
	by Lemma \ref{restrict} we can conclude that $\mu$ is obtained from $\lambda$ by adding a cell in row $i$ with $i \leq a$, as required.
\end{proof}

Recall that for any $\mathfrak{S}_n$-module $V$ and a partition $\lambda\vdash n$, we have the \emph{isotypic projection} 
$\iso^{\lambda}:V \rightarrow V$ which projects onto the $S^\lambda$-isotypic component of $V$. 
Using these projections, we can now define our lifting operators, which will be proven to map eigenvectors of $Q_{n}$ 
to those of $Q_{n+1}$. 

\begin{defn}
	Suppose $\lambda \vdash n$ and $\lambda + e_{a} = \mu \vdash n+1$ are two 
	partitions. 	
	Define the \emph{lifting operator} 
	$$
	\pro_{a}^{\lambda,\mu} := \iso^{\mu} \circ \sh_{a}:S^{\lambda} \rightarrow S^{\mu}\subseteq M^\mu.
	$$ 
	Note that since $\sh_a(S^\lambda)\subseteq M^\mu$ and $M^\mu$ contains a unique copy of $S^\mu$, 
	the image of $S^{\lambda}$ under $\pro_a^{\lambda,\mu}$ is actually contained in $S^\mu$.
\end{defn}

The next results give some properties 
of the lifting operators; 
these properties depend in an essential way on the choice of $\sh_{a}$ above. 
Our choice for $\sh_{a}$ gives 
the eigenspaces a different 
structure to those in \cite{dieker2018spectral}, in particular we are able to find \emph{all} the 
eigenvectors for a module $S^{\mu}$ by looking at lifted eigenvectors from partitions $\lambda \subset \mu$.

\begin{corollary}
	\label{nonzero}
	For any $\lambda \vdash n$ and $\lambda+e_{a} \vdash n+1$, there exists 
	some $v \in S^{\lambda}$ with
	\[\pro^{\lambda,\lambda+e_{a}}_{a}(v) \neq 0. \]
\end{corollary}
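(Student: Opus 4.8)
The plan is to show that the lifting operator $\pro_a^{\lambda,\lambda+e_a}=\iso^\mu\circ\sh_a$ does not annihilate all of $S^\lambda$, equivalently that $\sh_a(S^\lambda)$ has nonzero image under the isotypic projection $\iso^\mu:M^\mu\to M^\mu$. Since $S^\lambda$ is a simple $\mathfrak{S}_n$-module and $\sh_a$ is injective on words (it merely appends a symbol), the subspace $\sh_a(S^\lambda)\subseteq M^{\lambda+e_a}$ is nonzero; the only thing to rule out is that this nonzero subspace lies entirely in the sum of $S^\nu$-isotypic components for $\nu\neq\mu$.

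First I would invoke Lemma~\ref{lives}: it tells us that $\sh_a(S^\lambda)$ is contained in a submodule of $M^{\lambda+e_a}$ isomorphic to $\bigoplus_\mu S^\mu$, where $\mu$ ranges over partitions obtained from $\lambda$ by adding a box in some row $i\le a$. So $\sh_a(S^\lambda)$ decomposes according to these isotypic components. The key observation is that $\sh_a$ is not an $\mathfrak{S}_{n+1}$-module map (it is only $\mathfrak{S}_n$-equivariant), so a priori its image could be skew to the isotypic decomposition; but the target decomposition is multiplicity-free, so each isotypic component of $M^{\lambda+e_a}$ appearing in Lemma~\ref{lives} is a single copy of an irreducible. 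The dimension of $\sh_a(S^\lambda)$ is exactly $d_\lambda$ (injectivity), and I would compare this with the dimensions of the candidate summands. More directly: if $\iso^\mu\circ\sh_a$ were zero on all of $S^\lambda$, then $\sh_a(S^\lambda)$ would be contained in $\bigoplus_{\nu\neq\mu,\,\nu=\lambda+e_i,\,i<a} S^\nu$, a proper subspace of the submodule in Lemma~\ref{lives}.

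The cleanest way to finish is a descending induction on $a$ (or on the number of rows, using the explicit element $\sh_b$ with $b=l(\lambda)+1$ as in the proof of Lemma~\ref{lives}). For $b=l(\lambda)+1$, adding a box in row $b$ is the only option with $i\le b$ among genuinely new partitions obtainable — actually one needs to be slightly careful, but the bottom case is transparent: $\sh_b(S^\lambda)$ generates the induced module $\mathrm{Ind}_{\mathfrak{S}_n\times\mathfrak{S}_1}^{\mathfrak{S}_{n+1}}(S^\lambda\otimes S^{(1)})$, and the $S^{\lambda+e_b}$-component of that is visibly nonzero because $\sh_b(S^\lambda)$ spans it together with its $\mathfrak{S}_{n+1}$-translates and the induced module is multiplicity-free containing $S^{\lambda+e_b}$. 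Then, for general $a<b$, I would use the identity $\sh_a(w)=\Theta_{b,a}(\sh_b(w))$ for words $w$ not containing the symbol $b$ (established in the proof of Lemma~\ref{lives}), together with Lemma~\ref{restrict}: applying $\Theta_{b,a}$ to $N\cong\bigoplus_{\mu\supseteq\lambda}S^\mu$ kills exactly the summands $S^\mu$ with $\mu$ \emph{not} dominating $\lambda+e_a$ and is a module isomorphism on each surviving summand. Since $\lambda+e_a$ itself dominates $\lambda+e_a$, the summand $S^{\lambda+e_a}$ survives, and $\Theta_{b,a}$ restricted to it is an isomorphism onto a copy of $S^{\lambda+e_a}$ inside $M^{\lambda+e_a}$; hence $\iso^{\lambda+e_a}(\sh_a(S^\lambda))=\iso^{\lambda+e_a}(\Theta_{b,a}(\sh_b(S^\lambda)))$ is nonzero because $\sh_b(S^\lambda)$ had a nonzero $S^{\lambda+e_b}$-part which $\Theta_{b,a}$ moves isomorphically onto an $S^{\lambda+e_a}$-part. (One verifies $\lambda+e_a$ actually appears as a summand of $N$: since $\lambda\subset\lambda+e_a$ this holds by the branching rule, and $\lambda+e_a\trianglerighteq\lambda+e_a$ trivially, so Lemma~\ref{restrict} leaves it nonzero.)

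The main obstacle I anticipate is tracking which summand of $N$ maps onto the $S^{\lambda+e_a}$-isotypic component of $M^{\lambda+e_a}$ under $\Theta_{b,a}$: one must confirm that the \emph{particular} copy of $S^{\lambda+e_a}$ living inside $\Theta_{b,a}(N)\subseteq M^{\lambda+e_a}$ is the standard one (the unique submodule copy), rather than some copy that projects to zero under $\iso^{\lambda+e_a}$ — but since $M^{\lambda+e_a}$ has a \emph{unique} submodule isomorphic to $S^{\lambda+e_a}$ (stated in the definition of the Specht submodule), any nonzero image of an $S^{\lambda+e_a}$ under a module map into $M^{\lambda+e_a}$ must be that copy, so $\iso^{\lambda+e_a}$ is injective on it. This dissolves the obstacle. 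Everything else is bookkeeping with the branching rule and the multiplicity-one statement for $S^\mu\subseteq M^\mu$.
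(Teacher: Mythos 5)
There is a genuine gap in your final step. Up to the point where you establish that $\Theta_{b,a}$ kills the summands of $N$ not dominating $\lambda+e_a$ and carries the summand $S^{\lambda+e_a}$ of $N$ isomorphically onto the unique copy of $S^{\lambda+e_a}$ inside $M^{\lambda+e_a}$, you are following the paper's route (via $\sh_a=\Theta_{b,a}\circ\sh_b$ with $b=l(\lambda)+1$ and $N=\langle\sh_b(S^\lambda)\rangle$). But from the nonvanishing of $\iso^{\lambda+e_a}\circ\Theta_{b,a}$ on $N$ you cannot immediately conclude that it is nonzero on the particular $d_\lambda$-dimensional subspace $\sh_b(S^\lambda)\subsetneq N$, and the reason you give is in fact false: since $\Theta_{b,a}$ is an $\mathfrak{S}_{n+1}$-module morphism it preserves isotypic type, so it can never move an ``$S^{\lambda+e_b}$-part'' onto an ``$S^{\lambda+e_a}$-part''; on the contrary, when $a<b$ the $S^{\lambda+e_b}$-isotypic component of $N$ is exactly one of the summands annihilated by $\Theta_{b,a}$ (as $\lambda+e_b$ does not dominate $\lambda+e_a$). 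Even under the charitable reading that you meant the $S^{\lambda+e_a}$-part of $\sh_b(S^\lambda)$, you would still need to justify that this projection is nonzero, which does not follow from anything you have written.

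The missing idea is the paper's key move: argue at the level of \emph{generated submodules}. Since $\Theta_{b,a}$ and $\iso^{\lambda+e_a}$ are module morphisms and $\sh_b(S^\lambda)$ generates $N$, one has $\langle\sh_a(S^\lambda)\rangle=\Theta_{b,a}(\langle\sh_b(S^\lambda)\rangle)=\Theta_{b,a}(N)\cong\bigoplus_{1\le i\le a}S^{\lambda+e_i}$. Hence if $\pro_a^{\lambda,\lambda+e_a}$ vanished on $S^\lambda$, then $\sh_a(S^\lambda)$ would lie in $\ker\iso^{\lambda+e_a}$, which is an $\mathfrak{S}_{n+1}$-submodule of $M^{\lambda+e_a}$ with no $S^{\lambda+e_a}$-component; being a submodule, it would then contain the whole of $\langle\sh_a(S^\lambda)\rangle=\Theta_{b,a}(N)$, contradicting the fact that this module has $S^{\lambda+e_a}$ as a summand. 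Equivalently: a module map vanishing on a generating subspace vanishes on the whole module, so $\iso^{\lambda+e_a}\circ\Theta_{b,a}$ cannot vanish on $\sh_b(S^\lambda)$ given that it is nonzero on $N$. Without this bridge (or some substitute for it), your argument does not close; with it, your proof becomes essentially the paper's.
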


\begin{proof}
If $\pro^{\lambda,\lambda+e_{a}}_{a}(S^\lambda) = 0$, then the image $\sh_a(S^\lambda)$ 
lies in the kernel of the projection $\iso^{\lambda+e_a}:M^{\lambda+e_a}\to S^{\lambda+e_a}$,
which is an $\mathfrak{S}_{n+1}$-submodule with no component equal to $S^{\lambda+e_a}$.
Hence the submodule generated by $\sh_a(S^\lambda)$ has no component equal to $S^{\lambda+e_a}$.
But we previously observed that (with notation as in the proof of Lemma \ref{lives})
	\[\langle \sh_{a}(S^{\lambda}) \rangle =  \langle\Theta_{b,a}( 
	\sh_{b}(S^{\lambda}))\rangle=\Theta_{b,a}( 
	\langle\sh_{b}(S^{\lambda})\rangle ) \cong \Theta_{b,a}(N) 
	\cong \bigoplus_{1\leq i \leq a} S^{\lambda + e_{i}}.\]
Since the right hand side contains $S^{\lambda+e_a}$ as a summand, we have a contradiction.
\end{proof}

We already know the map $\iso^{\mu}$ is an $\mathfrak{S}_{n+1}$-module 
morphism. Let us realise $\mathfrak{S}_{n}$ inside $\mathfrak{S}_{n+1}$ as the stabilizer of the $(n+1)^{\rm th}$ position. 
Then any $\mathfrak{S}_{n+1}$-module gives rise to an $\mathfrak{S}_n$-module by restriction.

\begin{lemma}
	\label{modulemorph}
	The linear operator $\pro_{a}^{\lambda,\lambda+e_{a}}$ is a 
	$\mathfrak{S}_{n}$-module morphism with trivial kernel.
\end{lemma}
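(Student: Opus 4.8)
The plan is to split $\pro_{a}^{\lambda,\lambda+e_{a}} = \iso^{\lambda+e_{a}}\circ \sh_{a}$ into its two factors, verify $\mathfrak{S}_{n}$-equivariance of each, and then deduce triviality of the kernel from simplicity of $S^{\lambda}$ together with the already-established non-vanishing in Corollary~\ref{nonzero}.

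First I would check that $\sh_{a}:M^{n}\to M^{n+1}$ is $\mathfrak{S}_{n}$-equivariant with respect to the chosen embedding of $\mathfrak{S}_{n}$ inside $\mathfrak{S}_{n+1}$ as the stabilizer of the $(n+1)^{\mathrm{th}}$ position. It suffices to work on words: if $\sigma\in S_{n}$ fixes position $n+1$, then acting by place permutations on $w\cdot a\in W^{n+1}$ only rearranges the first $n$ symbols, so $\sigma(w\cdot a) = (\sigma w)\cdot a = \sh_{a}(\sigma w)$; extending linearly gives $\sh_{a}\circ\sigma = \sigma\circ\sh_{a}$ on $M^{n}$. Next, $\iso^{\lambda+e_{a}}$ is an $\mathfrak{S}_{n+1}$-module morphism by definition of the isotypic projection, hence in particular an $\mathfrak{S}_{n}$-module morphism after restricting the action to the stabilizer subgroup. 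A composite of module morphisms is again a module morphism, so $\pro_{a}^{\lambda,\lambda+e_{a}}$ is an $\mathfrak{S}_{n}$-module morphism from $S^{\lambda}$ to $S^{\lambda+e_{a}}$ (landing in $S^{\lambda+e_{a}}\subseteq M^{\lambda+e_{a}}$ as noted when the lifting operator was defined).

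For the kernel, since $\pro_{a}^{\lambda,\lambda+e_{a}}$ is a module morphism, $\ker\pro_{a}^{\lambda,\lambda+e_{a}}$ is an $\mathfrak{S}_{n}$-submodule of $S^{\lambda}$. The Specht module $S^{\lambda}$ is simple, so this kernel is either $0$ or all of $S^{\lambda}$. Corollary~\ref{nonzero} supplies a vector $v\in S^{\lambda}$ with $\pro_{a}^{\lambda,\lambda+e_{a}}(v)\neq 0$, which excludes the possibility that the kernel is everything. Therefore the kernel is trivial.

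I do not expect a genuine obstacle here: the substantive input is Corollary~\ref{nonzero}, which is already proved, and the rest is bookkeeping. The one point that deserves care is that the $\mathfrak{S}_{n}$-equivariance of $\sh_{a}$ relies on appending the new symbol at position $n+1$ precisely because $\mathfrak{S}_{n}$ is realised as the stabilizer of that position; were the symbol inserted elsewhere, $\sh_{a}$ would not be $\mathfrak{S}_{n}$-equivariant and the argument would fail.
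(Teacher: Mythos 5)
Your proof is correct and follows essentially the same route as the paper: equivariance of $\sh_{a}$ because the appended symbol sits in the position fixed by $\mathfrak{S}_{n}$, equivariance of the isotypic projection, and injectivity from Corollary~\ref{nonzero} together with simplicity of $S^{\lambda}$ (your kernel-is-a-submodule argument is just Schur's lemma spelled out, which is what the paper invokes).
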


\begin{proof}
	The key observation is that $\sh_a(\sigma(v)) = \sigma(\sh_a(v))$ for all $v \in S^{\lambda}$ and $\sigma \in 
	\mathfrak{S}_{n} \subset \mathfrak{S}_{n+1}$. 
	This is obvious, since $\sh_a$ adds an element in the final position which is not affected by $\sigma$. 
	Hence $\pro_a^{\lambda,\lambda+e_a}$ is the composition of two $\mathfrak{S}_n$-module morphisms.
The final observation follows from Corollary \ref{nonzero} -- since $\pro_a^{\lambda,\lambda+e_a}$
is a nonzero module morphism with a simple module as its domain, it must be injective by Schur's lemma.	
\end{proof}

That the maps $\pro^{\lambda, \lambda + e_{a}}_{a}$ 
are injective is a key point which simplifies the analysis in this paper compared to that in 
\cite{dieker2018spectral}, where the lifting operators can kill eigenvectors.
The next results 
show that $\pro_{a}^{\lambda,\lambda+e_{a}}$ does indeed lift eigenvectors of 
$Q_{n}$ into those of $Q_{n+1}$. 
To establish this we apply our projection $\iso^{\lambda+e_{a}}$ to equation~(\ref{eqn:restricted}). 
We can now state our versions of \cite[Lemma 
48, Theorem 49]{dieker2018spectral};
the proofs follow \emph{mutatis mutandis} from the ones given there (the 
changes 
needed are to the constants in equation (\ref{mastereq})).

\begin{lemma}[Lemma 48 of \cite{dieker2018spectral}]
	For $\lambda = (\lambda_{1},\dots ,\lambda_{r}) \vdash n$, $a \in 
	\{1,2,\dots r+1\}$ and $\mu = \lambda + e_{i}$ for some $1\leq i \leq a$,
	
	\[	Q_{n+1} \circ \pro_{a}^{\lambda,\mu} - 
	\pro_{a}^{\lambda,\mu} \circ Q_{n} = \frac{2 
		+\lambda_{a} -a}{n+1}\pro_{a}^{\lambda,\mu} + 
	\frac{1}{n+1}\sum_{i \leq b \leq a} \Theta_{b,a} \circ 
	\pro_{b}^{\lambda,\mu}.\]
	
\end{lemma}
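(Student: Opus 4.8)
The plan is to obtain the identity by applying the isotypic projection $\iso^{\mu}$ to the restricted master equation~\eqref{eqn:restricted} — both sides of which are maps $S^{\lambda}\to M^{\lambda+e_{a}}$ — and then simplifying the two sides separately.

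First I would handle the left-hand side, which is the quick part. Since $\iso^{\mu}$ is an $\mathfrak{S}_{n+1}$-module endomorphism of $M^{\lambda+e_{a}}$ it commutes with the action of the group-algebra element $Q_{n+1}$, so $\iso^{\mu}\circ Q_{n+1}\circ\sh_{a}|_{S^{\lambda}} = Q_{n+1}\circ\bigl(\iso^{\mu}\circ\sh_{a}|_{S^{\lambda}}\bigr) = Q_{n+1}\circ\pro_{a}^{\lambda,\mu}$ by the definition of the lifting operator; and since $Q_{n}$ preserves the submodule $S^{\lambda}$, we get $\iso^{\mu}\circ\sh_{a}\circ Q_{n}|_{S^{\lambda}} = \pro_{a}^{\lambda,\mu}\circ Q_{n}$ directly. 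So the left-hand side is exactly $Q_{n+1}\circ\pro_{a}^{\lambda,\mu} - \pro_{a}^{\lambda,\mu}\circ Q_{n}$.

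The substance is the right-hand side. The term $\tfrac{1}{n+1}\sh_{a}|_{S^{\lambda}}$ projects at once to $\tfrac{1}{n+1}\pro_{a}^{\lambda,\mu}$. For the sum over $b$, I would first record the elementary word identity (immediate from Definition~\ref{lin})
\[\Theta_{b,a}\circ\sh_{b} = \sh_{b}\circ\Theta_{b,a} + \sh_{a},\]
whose extra $\sh_{a}$ term accounts for the freshly-appended copy of $b$ being an eligible target for the switch. Rearranging this, applying $\iso^{\mu}$, and using that $\Theta_{b,a}$ is an $\mathfrak{S}_{n+1}$-module morphism between the relevant permutation modules (cf.\ Lemma~\ref{thetamorphism}), so that $\iso^{\mu}$ commutes past it, gives $\iso^{\mu}\circ\sh_{b}\circ\Theta_{b,a}|_{S^{\lambda}} = \Theta_{b,a}\circ\pro_{b}^{\lambda,\mu} - \pro_{a}^{\lambda,\mu}$ for each $b\le a$. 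Lemma~\ref{lives} now does essential work: $\sh_{b}(S^{\lambda})$ lies in a submodule of $M^{\lambda+e_{b}}$ whose only composition factors are the $S^{\lambda+e_{j}}$ with $j\le b$, so $\pro_{b}^{\lambda,\mu}=0$ whenever $b<i$, which is exactly why the summation range collapses from $1\le b\le a$ to $i\le b\le a$.

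It remains to gather the accumulated scalar multiples of $\pro_{a}^{\lambda,\mu}$: the $+\tfrac{1}{n+1}\pro_{a}^{\lambda,\mu}$ from the $\sh_{a}$ term, one $-\tfrac{1}{n+1}\pro_{a}^{\lambda,\mu}$ from each $b$ in the sum, and the diagonal term $\Theta_{a,a}$ (which acts on the ambient permutation module as multiplication by the number of $a$'s in the relevant evaluation, a scalar of the form $\lambda_{a}+\mathrm{const}$). Tallying these yields precisely $\tfrac{2+\lambda_{a}-a}{n+1}$, which one recognises via Lemma~\ref{compute} as $\tfrac{1}{n+1}$ times $(\lambda_{a}+1)-a+1$, the eigenvalue contribution of the box added to reach $\lambda+e_{a}$. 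I expect this final tally — pinning down exactly which terms contribute a multiple of $\pro_{a}^{\lambda,\mu}$, and whether the $\Theta_{a,a}$-term is best folded into the coefficient or kept inside the sum — to be the only point requiring genuine care; everything else is a faithful transcription of the proof of \cite[Lemma~48]{dieker2018spectral}, now with the constants read off from~\eqref{mastereq}.
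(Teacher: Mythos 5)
Your route is exactly the intended one: the paper's own ``proof'' of this lemma is a two-line citation of \cite[Lemma 48]{dieker2018spectral} with the constants adjusted to \eqref{mastereq}, and the steps you carry out — apply $\iso^{\mu}$ to \eqref{eqn:restricted}, commute it past $Q_{n+1}$ and past the module morphisms $\Theta_{b,a}$ (Lemma~\ref{thetamorphism}), use the word identity $\Theta_{b,a}\circ\sh_{b}=\sh_{b}\circ\Theta_{b,a}+\sh_{a}$, and invoke Lemma~\ref{lives} to conclude $\pro_{b}^{\lambda,\mu}=0$ for $b<i$ — are all correct and are precisely the Dieker--Saliola argument adapted to this shuffle.

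The point you explicitly left open (``fold the $\Theta_{a,a}$-term into the coefficient or keep it inside the sum'') is not a matter of taste, and it is where your final tally as written does not close. Completing your own computation gives, on $S^{\lambda}$,
\[
Q_{n+1}\circ\pro_{a}^{\lambda,\mu}-\pro_{a}^{\lambda,\mu}\circ Q_{n}
\;=\;\frac{1-a}{n+1}\,\pro_{a}^{\lambda,\mu}\;+\;\frac{1}{n+1}\sum_{i\le b\le a}\Theta_{b,a}\circ\pro_{b}^{\lambda,\mu},
\]
since the $\sh_{a}$ term contributes $+\tfrac{1}{n+1}$ and each of the $a$ admissible values of $b$ contributes $-\tfrac{1}{n+1}$. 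Because the image of $\pro_{a}^{\lambda,\mu}$ lies in $M^{\lambda+e_{a}}$, the diagonal operator $\Theta_{a,a}$ acts on it as the scalar $\lambda_{a}+1$ (the number of symbols $a$ in the evaluation $\lambda+e_{a}$), so extracting the $b=a$ summand converts this to the coefficient $\tfrac{1-a+\lambda_{a}+1}{n+1}=\tfrac{2+\lambda_{a}-a}{n+1}$ \emph{with the sum now running over $i\le b\le a-1$ only}. You cannot have both the folded coefficient and the summand $b=a$: keeping $\Theta_{a,a}$ in the sum while also writing $\tfrac{2+\lambda_{a}-a}{n+1}$ double-counts it, and indeed in the case $i=a$ it would give $\tfrac{3+2\lambda_{a}-a}{n+1}\pro_{a}^{\lambda,\mu}$, contradicting Theorem~\ref{liftappendix} (which for $i=a$ requires the sum to be empty). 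So your proof is sound, but its conclusion is the identity above (equivalently, coefficient $\tfrac{2+\lambda_{a}-a}{n+1}$ with summation range $i\le b\le a-1$), not literally the displayed statement with both the folded coefficient and the range $i\le b\le a$; you should finish by stating which of the two equivalent forms you are proving rather than asserting the tally ``yields precisely $\tfrac{2+\lambda_{a}-a}{n+1}$'' while leaving $b=a$ in the sum.
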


\begin{proof}
	This follows from the work in \cite{dieker2018spectral}: because we 
	have not changed the switching operators $\Theta_{b,a}$ the proof still 
	holds. The values on the right hand side change to reflect the difference 
	in our equation (\ref{mastereq}).
\end{proof}

\begin{thm}[Theorem 49 of \cite{dieker2018spectral}]
	\label{liftappendix}
	For $\lambda = (\lambda_{1},\dots ,\lambda_{r}) \vdash n$, $a \in 
	\{1,2,\dots r+1\}$ and $\mu = \lambda + e_{i}$ with $1\leq i \leq a$,
	
	\[	Q_{n+1} \circ \pro_{a}^{\lambda,\mu} - 
	\pro_{a}^{\lambda,\mu} \circ Q_{n} = \frac{(2 + 
		\lambda_{i} -i)}{n+1}\pro_{a}^{\lambda,\mu} .\]
	
	In particular if $v \in S^{\lambda}$ is an eigenvector of $Q_{n}$ with 
	eigenvalue $\varepsilon$, then either 
	$\pro_{a}^{\lambda,\mu}(v) = 0$ or 
	$\pro_{a}^{\lambda,\mu}(v)$ is an eigenvector of 
	$Q_{n+1}$ with eigenvalue 
	
	\[\varepsilon + \frac{2 +\lambda_{i} -i }{n+1}.\]
\end{thm}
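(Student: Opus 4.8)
The plan is to extract the operator identity from the preceding lemma (our analogue of \cite[Lemma~48]{dieker2018spectral}) and then read off the ``in particular'' clause. Granting that lemma, we already have
\[
Q_{n+1}\circ\pro_a^{\lambda,\mu} - \pro_a^{\lambda,\mu}\circ Q_n
\;=\; \frac{2+\lambda_a-a}{n+1}\,\pro_a^{\lambda,\mu}
\;+\; \frac{1}{n+1}\sum_{i\le b\le a}\Theta_{b,a}\circ\pro_b^{\lambda,\mu}\,,
\]
so the entire content of the theorem is the claim that the residual sum equals $\tfrac{(\lambda_i-i)-(\lambda_a-a)}{n+1}\,\pro_a^{\lambda,\mu}$; adding it to the first term collapses the two scalars into $\tfrac{2+\lambda_i-i}{n+1}$. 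Once the operator identity $Q_{n+1}\circ\pro_a^{\lambda,\mu} - \pro_a^{\lambda,\mu}\circ Q_n = \tfrac{2+\lambda_i-i}{n+1}\,\pro_a^{\lambda,\mu}$ is in hand, the last assertion is immediate: if $v\in S^\lambda$ satisfies $Q_nv=\varepsilon v$ then $Q_{n+1}\bigl(\pro_a^{\lambda,\mu}(v)\bigr) = \pro_a^{\lambda,\mu}(\varepsilon v) + \tfrac{2+\lambda_i-i}{n+1}\pro_a^{\lambda,\mu}(v) = \bigl(\varepsilon+\tfrac{2+\lambda_i-i}{n+1}\bigr)\pro_a^{\lambda,\mu}(v)$, which is either $0$ or an eigenvector of $Q_{n+1}$ with the stated eigenvalue.

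To evaluate the residual sum I would invoke Schur's lemma. Realise $\mathfrak{S}_n$ inside $\mathfrak{S}_{n+1}$ as the stabiliser of position $n+1$. Then each $\sh_b$, and hence each lifting operator $\pro_b^{\lambda,\mu}=\iso^{\mu}\circ\sh_b$, is an $\mathfrak{S}_n$-module morphism — appending a symbol in the last place commutes with $\mathfrak{S}_n$ (as in Lemma~\ref{modulemorph}), and $\iso^{\mu}$ is an $\mathfrak{S}_{n+1}$- hence $\mathfrak{S}_n$-morphism — and each $\Theta_{b,a}$ is an $\mathfrak{S}_n$-morphism by Lemma~\ref{thetamorphism}. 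Thus every summand $\Theta_{b,a}\circ\pro_b^{\lambda,\mu}$, as well as $\pro_a^{\lambda,\mu}$, is an $\mathfrak{S}_n$-morphism from the simple module $S^\lambda$ into (a copy of) $S^\mu$. Since $\mu=\lambda+e_i$, the branching rule tells us that $S^\lambda$ occurs in $\mathrm{Res}^{\mathfrak{S}_{n+1}}_{\mathfrak{S}_n}S^\mu$ with multiplicity exactly one, so the space of such morphisms is at most one-dimensional. Hence the residual sum is a scalar multiple $c\,\pro_a^{\lambda,\mu}$; when $\pro_a^{\lambda,\mu}=0$ this (and the asserted identity) holds trivially, so we may assume $\pro_a^{\lambda,\mu}\neq0$.

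It then remains to show $c=(\lambda_i-i)-(\lambda_a-a)$. I would pin this down by evaluating the lemma's identity on one convenient nonzero vector of $S^\lambda$ — for instance the element $s(T)$ attached to a standard tableau $T$ of shape $\lambda$ — and comparing the coefficient of a fixed basis word of $M^\mu$: the operators $\sh_b$, $\Theta_{b,a}$ and $\iso^{\mu}$ all act explicitly on words, so this reduces to a finite bookkeeping computation. (An alternative is an induction on $a-i$, playing the lemma's identity for the index $a$ off against the one for $a-1$.) This scalar computation is the one genuine obstacle: one must track how the successive symbol replacements carried out by the $\Theta_{b,a}$ interact with the column-antisymmetrisation defining $s(T)$ and with the isotypic projection. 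As the excerpt records, exactly this computation is performed in the proof of \cite[Theorem~49]{dieker2018spectral}, the only change required being to replace the constants there by those in equation~\eqref{mastereq}.
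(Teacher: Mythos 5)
Your reduction of the ``in particular'' clause to the operator identity, and the arithmetic identifying the residual sum with $\tfrac{(\lambda_i-i)-(\lambda_a-a)}{n+1}\pro_a^{\lambda,\mu}$, are fine, and deferring the final bookkeeping to \cite{dieker2018spectral} is in fact exactly what the paper does (its proof is the one--line remark that the argument of Theorem 49 there goes through with the constants of equation \eqref{mastereq}). The genuine gap is in your Schur--lemma step. Each summand $\Theta_{b,a}\circ\pro_b^{\lambda,\mu}$ is an $\mathfrak{S}_n$-morphism from $S^\lambda$ into the $S^\mu$-isotypic component of $M^{\lambda+e_a}$, \emph{not} into one fixed copy of $S^\mu$: that isotypic component generally contains several copies (e.g.\ $\lambda=(2,2)$, $a=3$, $i=1$, $\mu=(3,2)$: $S^{(3,2)}$ occurs twice in $M^{(2,2,1)}$ by Lemma~\ref{lem:youngrule}), so the space of $\mathfrak{S}_n$-morphisms from $S^\lambda$ into it has dimension equal to that multiplicity, and multiplicity-one branching alone does not force each summand -- nor a priori their sum -- to be proportional to $\pro_a^{\lambda,\mu}$. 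The repair is to treat the sum as a whole rather than term by term: by Lemma~\ref{lives} (and the proof of Corollary~\ref{nonzero}) the submodule $\langle\sh_a(S^\lambda)\rangle\subseteq M^{\lambda+e_a}$ is multiplicity free, hence contains a unique copy $Y\cong S^\mu$, and $\pro_a^{\lambda,\mu}(S^\lambda)\subseteq Y$. Since $Y$ is a submodule it is stable under $Q_{n+1}$, so $Q_{n+1}\circ\pro_a^{\lambda,\mu}-\pro_a^{\lambda,\mu}\circ Q_n$ maps $S^\lambda$ into $Y$; subtracting the first right-hand term of the preceding lemma shows the residual sum maps into $Y$ as well. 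Now $\mathrm{Hom}_{\mathfrak{S}_n}(S^\lambda,Y)$ is one dimensional by the branching rule, and $\pro_a^{\lambda,\mu}\neq 0$ by the argument of Corollary~\ref{nonzero}, so your scalar $c$ exists.

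As for evaluating $c$: leaving the ``finite bookkeeping'' to \cite{dieker2018spectral} matches the paper, but it can be avoided entirely. Equation \eqref{leftover} says $Q_{n+1}\circ\sh_a-\sh_a\circ Q_n=\tfrac{1}{n+1}\,(\mathrm{id}+X_{n+1})\circ\sh_a$, where $X_{n+1}=\sum_{i=1}^{n}(i\hspace{0.15cm}n+1)$ is a Jucys--Murphy element. Applying $\iso^{\mu}$, which commutes with every element of $\mathbb{C}[S_{n+1}]$, gives $Q_{n+1}\circ\pro_a^{\lambda,\mu}-\pro_a^{\lambda,\mu}\circ Q_n=\tfrac{1}{n+1}\bigl(\pro_a^{\lambda,\mu}+X_{n+1}\circ\pro_a^{\lambda,\mu}\bigr)$. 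The image of $\pro_a^{\lambda,\mu}$ lies in the $S^\lambda$-isotypic part (for the restricted $\mathfrak{S}_n$-action) of the $S^\mu$-isotypic component, on which the classical Jucys--Murphy/Gelfand--Tsetlin theory says $X_{n+1}$ acts by the content $\lambda_i+1-i$ of the added box $\mu/\lambda$; this yields the constant $\tfrac{2+\lambda_i-i}{n+1}$ directly, with no induction on $a-i$ and no tableau computation.
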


\begin{proof}
	This proof also follows from the work in \cite{dieker2018spectral}.
\end{proof}

The above theorem tells us exactly how to turn eigenvectors of $Q_{n}$ into 
those of $Q_{n+1}$ and, crucially, it also shows how the eigenvalues change
in value. 
The final part of the analysis rests on showing that \emph{all} of the eigenvectors in a Specht module $S^{\mu}$ can be retrieved
by lifting from Specht modules $S^\lambda$ with $\mu = 
\lambda+e_a$. 
In fact, we show that these lifted eigenvectors form a basis of $S^\mu$.

\begin{thm}
	\label{mapsnew}
	For any $\mu \vdash n+1$ we may find a basis of eigenvectors of $Q_{n+1}$ 
	for the module $S^{\mu}$, 
	by lifting the eigenvectors of $Q_{n}$ belonging in the modules 
	$S^{\lambda}$ with $\lambda\vdash n$ and $\lambda \subset \mu$.
\end{thm}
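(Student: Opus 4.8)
The plan is to prove the statement by induction on $n$, using the lifting operators $\pro_a^{\lambda,\mu}$ of the previous results together with the classical (multiplicity-free) branching rule for $S_n \subset S_{n+1}$. The base case is immediate: for $n=1$ every module $S^\mu$ with $\mu\vdash 2$ is one-dimensional and $Q_1 = \id$, so any nonzero vector is an eigenvector. For the inductive step, assume that for every $\lambda\vdash n$ the operator $Q_n$ admits a basis of eigenvectors on $S^\lambda$ (this is exactly the content of the present theorem at level $n$; alternatively it follows at once from reversibility of the one-sided transposition shuffle, since the transition operator of $P_n$ is a symmetric matrix and hence $\rho_\lambda(Q_n)$ is diagonalizable on $S^\lambda$). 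Fix $\mu\vdash n+1$. The partitions $\lambda\vdash n$ with $\lambda\subset\mu$ are precisely those obtained from $\mu$ by deleting a removable box, and for each such $\lambda$ there is a unique row index $a=a(\lambda)\in\{1,\dots,l(\lambda)+1\}$ with $\mu=\lambda+e_{a}$.

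For each such $\lambda$, choose a basis $B_\lambda$ of $S^\lambda$ consisting of eigenvectors of $Q_n$. By Lemma~\ref{modulemorph} the lifting operator $\pro_{a(\lambda)}^{\lambda,\mu}=\iso^{\mu}\circ\sh_{a(\lambda)}$ is an injective $\mathfrak{S}_n$-module morphism $S^\lambda \to S^\mu$, so $\pro_{a(\lambda)}^{\lambda,\mu}(B_\lambda)$ is a linearly independent set of $d_\lambda$ vectors inside $S^\mu$. Moreover, by Theorem~\ref{liftappendix}, for $v\in B_\lambda$ the vector $\pro_{a(\lambda)}^{\lambda,\mu}(v)$ is either $0$ or an eigenvector of $Q_{n+1}$; since $\pro_{a(\lambda)}^{\lambda,\mu}$ has trivial kernel and $v\neq 0$, it is in fact a (nonzero) eigenvector of $Q_{n+1}$. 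Thus each $\pro_{a(\lambda)}^{\lambda,\mu}(B_\lambda)$ consists of eigenvectors of $Q_{n+1}$.

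It remains to show that $\bigcup_{\lambda\subset\mu}\pro_{a(\lambda)}^{\lambda,\mu}(B_\lambda)$ is a basis of $S^\mu$; equivalently, that the subspaces $\pro_{a(\lambda)}^{\lambda,\mu}(S^\lambda)$ form an internal direct sum equal to $S^\mu$. This is the crux, and it is purely representation-theoretic. Restricting $S^\mu$ to $\mathfrak{S}_n$, the branching rule gives $\mathrm{Res}^{\mathfrak{S}_{n+1}}_{\mathfrak{S}_{n}} S^\mu \cong \bigoplus_{\lambda\subset\mu,\,\lambda\vdash n} S^\lambda$, a \emph{multiplicity-free} decomposition. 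Since $\pro_{a(\lambda)}^{\lambda,\mu}$ is an injective module morphism, its image is a submodule of $\mathrm{Res}\,S^\mu$ isomorphic to $S^\lambda$; by multiplicity-freeness and Schur's lemma this image must coincide with the unique $S^\lambda$-isotypic component of $\mathrm{Res}\,S^\mu$. As the partitions $\lambda\subset\mu$ are pairwise distinct, the corresponding isotypic components are linearly independent and their sum is all of $\mathrm{Res}\,S^\mu$, whence $\bigoplus_{\lambda\subset\mu}\pro_{a(\lambda)}^{\lambda,\mu}(S^\lambda)=S^\mu$. (A dimension count confirms consistency: $\sum_{\lambda\subset\mu} d_\lambda = d_\mu$, again by the branching rule.) Since $B_\lambda$ is a basis of $S^\lambda$ and $\pro_{a(\lambda)}^{\lambda,\mu}$ is injective, the union of the lifted sets is a basis of $S^\mu$, completing the induction.

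The only point requiring genuine thought is the linear independence of eigenvectors lifted from distinct source partitions $\lambda$, but this is not an analytic obstacle at all: it is forced by the multiplicity-freeness of the $S_n\!\uparrow\! S_{n+1}$ branching together with Schur's lemma, given that the lifting operators are injective. The one thing to be careful about upstream is simply that each $S^\lambda$ really does carry a full $Q_n$-eigenbasis to lift, which is precisely what the inductive hypothesis (or reversibility) supplies.
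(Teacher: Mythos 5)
Your proof is correct and follows essentially the same route as the paper: induction on $n$, the multiplicity-free branching rule for $\mathrm{Res}^{\mathfrak{S}_{n+1}}_{\mathfrak{S}_n}S^\mu$, injectivity of the lifting operators from Lemma~\ref{modulemorph}, and Theorem~\ref{liftappendix} to identify the lifted vectors as eigenvectors of $Q_{n+1}$. The only difference is that you spell out explicitly (via Schur's lemma and multiplicity-freeness) why the images of the lifts from distinct $\lambda$ are exactly the isotypic components and hence sum directly to $S^\mu$, a point the paper's proof leaves implicit.
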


\begin{proof}
	
	We proceed by induction, for $n=1$ we know that the simple modules 
	$S^{(2)}, S^{(1,1)}$ of $\mathfrak{S}_{n+1}$ 
	are both one dimensional. Therefore, the eigenvector $a \in S^{(1)}$ when 
	lifted indeed forms a basis for each simple module.
	
	Consider the simple module of $S^{\mu}$ with $\mu \vdash n+1$. We know 
	classically from the branching rules of $S_{n}$ \cite[Theorem 
	2.8.3]{sagan2013symmetric} that the restriction 
	of this module to $\mathfrak{S}_{n}$ is 
	given by the 
	\[\textnormal{Res}^{\mathfrak{S}_{n+1}}_{\mathfrak{S}_{n}}(S^{\mu})\cong 
	\bigoplus_{\substack{\lambda
	 \vdash n \\\lambda \subset \mu}} 
	S^{\lambda}.\]
	Now suppose we have a basis of eigenvectors for every $S^{\lambda}$. By Lemma~\ref{modulemorph} the map $\pro^{\lambda,\mu}(S^{\lambda})$ 
	gives a basis for $S^{\lambda}$ inside of the vector space of 
	$\textnormal{Res}^{\mathfrak{S}_{n+1}}_{\mathfrak{S}_{n}}(S^{\mu})$ which 
	is the same vector space as $S^{\mu}$. Hence,
	considering all of the lifted eigenvectors from every $S^{\lambda}$ 
	together we find a basis for $S^{\mu}$. By Theorem~\ref{liftappendix} the 
	lifted eigenvectors form a basis of 
	eigenvectors for $S^{\mu}$.
\end{proof}

Inductively, for any $\lambda\vdash n$, Theorem \ref{mapsnew} gives us the 
 way to find all 
 the eigenvectors for $Q_{n}$ belonging to the Specht 
 module  $S^{\lambda}$: starting at $\emptyset$ and recursively applying 
 lifting 
 operators until we 
 reach $S^\lambda$ will give us an eigenvector, and all eigenvectors arise in 
 this way. 
 Note that $S^{\emptyset}$ has no eigenvectors attached to it, but we allow 
 $\emptyset$ to be an eigenvector with eigenvalue $0$, and $\sh_{a}(\emptyset) 
 =a$. This agrees with the formula in Theorem~\ref{liftappendix} because $a$ is the only 
 eigenvector of $\RL_{1}$ with eigenvalue $1 = 0 + (2+0-1)/(1)$.
 The inductive process of lifting naturally forms one path up Young's 
 lattice which starts at $\emptyset$ and ends at $\lambda$. Furthermore, by Theorem 
 \ref{mapsnew} each unique path we take  $\emptyset \rightarrow \lambda$ will 
 result 
 in a distinct eigenvector for $S^{\lambda}$, and all these eigenvectors together 
 form 
 a basis. We now are in a position to prove Theorem \ref{Maintheorem}.

\begin{proof}[Proof of Theorem \ref{Maintheorem}]
	Every eigenvector in our constructed basis gives a distinct eigenvalue of $S^{\lambda}$, hence there are $d_{\lambda}$ distinct eigenvalues. These are eigenvalues for the shuffle $Q_{n}$, and each one appears $d_{\lambda}$ times due to the isomorphism in Corollary~\ref{classiciso}. 
	Therefore overall we have found $\sum_{\lambda \vdash n} d_{\lambda}^{2} = 
	n!$	eigenvalues and thus have a complete set.
\end{proof}

\begin{proof}[Proof of Lemma~\ref{compute}]
	Given a standard tableau $T$ of size $n$, we build it up following its 
	labelling and keeping 
	track of the changes in eigenvalue given by Theorem \ref{liftappendix}. 
	When box 
	$(i,j)$ is added to $T$ we get a change in eigenvalue of 
	$\frac{2+\lambda_{i}-i}{n+1} = 
	\frac{2 + (j-1) -i}{T(i,j)} = \frac{j-i+1}{T(i,j)}$. After summing these 
	changes for all boxes $(i,j)$ in $T$ we divide by the size of $T$ to 
	normalise the eigenvalue.
\end{proof}

\bibliography{bibliography}

\begin{thebibliography}{10}

\bibitem{berestycki2019cutoff}
Nathana{\"e}l Berestycki and Bat{\i} {\c{S}}eng{\"u}l.
\newblock Cutoff for conjugacy-invariant random walks on the permutation group.
\newblock {\em Probability Theory and Related Fields}, 173(3-4):1197--1241,
  2019.

\bibitem{berestycki2012}
Nathanaël Berestycki and Richard Pymar.
\newblock Effect of scale on long-range random graphs and chromosomal
  inversions.
\newblock {\em The Annals of Applied Probability}, 22(4):1328--1361, 2012.

\bibitem{berestycki2011}
Nathanaël Berestycki, Oded Schramm, and Ofer Zeitouni.
\newblock Mixing times for random k -cycles and coalescence-fragmentation
  chains.
\newblock {\em The Annals of Probability}, 39(5):1815--1843, 2011.

\bibitem{bernstein2019}
Megan Bernstein and Evita Nestoridi.
\newblock Cutoff for random to random card shuffle.
\newblock {\em The Annals of Probability}, 47(5):3303--3320, 2019.

\bibitem{Diaconis1988}
Persi Diaconis.
\newblock {\em \emph{Group representations in probability and statistics}}.
\newblock Institute of Mathematical Statistics Lecture Notes---Monograph
  Series, 11. Institute of Mathematical Statistics, Hayward, CA, 1988.

\bibitem{diaconis1981generating}
Persi Diaconis and Mehrdad Shahshahani.
\newblock Generating a random permutation with random transpositions.
\newblock {\em Zeitschrift f{\"u}r Wahrscheinlichkeitstheorie und verwandte
  Gebiete}, 57(2):159--179, 1981.

\bibitem{dieker2018spectral}
Antonius~B. Dieker and Franco~V. Saliola.
\newblock Spectral analysis of random-to-random markov chains.
\newblock {\em Advances in Mathematics}, 323:427--485, 2018.

\bibitem{james_1984}
Gordon James and Adalbert Kerber.
\newblock {\em \emph{The Representation Theory of the Symmetric Group}}.
\newblock Encyclopedia of Mathematics and its Applications. Cambridge
  University Press, 1984.

\bibitem{Lacoin2016}
Hubert Lacoin.
\newblock {Mixing Time and Cutoff for the Adjacent Transposition Shuffle and
  the Simple Exclusion}.
\newblock {\em The Annals of Probability}, 44(2):1426--1487, 2016.

\bibitem{Lafreni2018}
Nadia Lafreni{\`e}re.
\newblock Eigenvalues of symmetrized shuffling operators.
\newblock {\em Preprint, arXiv:1811.07196}, 2018.

\bibitem{Levin2017}
David~A. Levin, Yuval Peres, and Elizabeth~L. Wilmer.
\newblock {\em \emph{Markov Chains and Mixing Times}}.
\newblock American Mathematical Society, Providence, RI, 2008.

\bibitem{MatheauRavenThesis}
Oliver Matheau-Raven.
\newblock {\em Random Walks on the Symmetric Group: Cutoff for the One-Sided
  Transposition Shuffles}.
\newblock PhD thesis, The University of York, 2020.

\bibitem{Mossel2004}
Elchanan Mossel, Yuval Peres, and Alistair Sinclair.
\newblock Shuffling by semi-random transpositions.
\newblock In {\em 45th Annual IEEE Symposium on Foundations of Computer
  Science}, pages 572--581. IEEE, 2004.

\bibitem{pinsky2012cyclic}
Ross~G. Pinsky.
\newblock Cyclic to random transposition shuffles.
\newblock {\em Preprint, arXiv:1204.2081}, 2012.

\bibitem{Pymar2011}
Richard Pymar.
\newblock Partial mixing of semi-random transposition shuffles.
\newblock {\em Preprint, arXiv:1302.2601}, 2013.

\bibitem{sagan2013symmetric}
Bruce~E. Sagan.
\newblock {\em \emph{The symmetric group - representations, combinatorial
  algorithms, and symmetric functions.}}
\newblock Wadsworth {\&} Brooks / Cole mathematics series. Wadsworth, 1991.

\bibitem{saloff2004random}
Laurent Saloff-Coste.
\newblock Random walks on finite groups.
\newblock In {\em Probability on discrete structures}, pages 263--346.
  Springer, 2004.

\end{thebibliography}
\bibliographystyle{plain}

\end{document}